\newtheorem{theorem}{Theorem}[section]
\newtheorem{corollary}[theorem]{Corollary}
\newtheorem{lemma}[theorem]{Lemma}
\newtheorem{proposition}[theorem]{Proposition}
\newtheorem{question}[theorem]{Question}
\theoremstyle{definition}
\newtheorem{definition}[theorem]{Definition}
\newtheorem{remark}[theorem]{Remark}
\renewenvironment{proof}[1][\proofname]{%
   \par\pushQED{\qed}\normalfont%
   \topsep6\p@\@plus6\p@\relax
   \trivlist\item[\hskip\labelsep\bfseries#1\@addpunct{.}]%
   \ignorespaces
}{%
   \popQED\endtrivlist\@endpefalse
}
\newtheorem{theore}{Theorem}[section]
\newtheorem{corollar}{Corollary}
\newtheorem{Definition}[theorem]{Definition}
\newtheorem*{Definition*}{Definition}
\newcommand\blfootnote[1]{%
  \begingroup
  \renewcommand\thefootnote{}\footnote{#1}%
  \addtocounter{footnote}{-1}%
  \endgroup }
\def\qed{\hfill \ifhmode\unskip\nobreak\fi\quad\ifmmode\Box\else$\Box$\fi\\ }
\begin{document}

\title[Algorithms detecting stability and Morseness]{Algorithms detecting stability and Morseness for finitely generated groups }
\author{Heejoung Kim}
\blfootnote{\textit {Date}: August 13, 2019.\\
\indent
2010 \textit{Mathematics Subject Classification}. Primary 20F65; Secondary 20F67,	20E07,  57M07.\\
\indent
\textit{Key words}. Stable subgroup, Morse subgroup, mapping class group, right-angled Artin group, limit group.
\indent
\thanks{}} 

\address{Department of Mathematics, University of Illinois at Urbana-Champaign, Urbana, IL 61801}
\email{hkim404@illinois.edu}

\begin{abstract}

The notions of stable and Morse subgroups of finitely generated groups generalize the concept of a quasiconvex subgroup of a word-hyperbolic group. For a word-hyperbolic group $G$, Kapovich \cite{K96} provided a partial algorithm  which, on input a finite set $S$ of $G$, halts if $S$ generates a quasiconvex subgroup of $G$ and runs forever otherwise.
In this paper, we give various detection and decidability algorithms for stability and Morseness of a finitely generated subgroup of mapping class groups, right-angled Artin groups, toral relatively hyperbolic groups, and  finitely generated groups discriminated by a locally quasiconvex torsion-free hyperbolic group (for example, ordinary limit groups).
\end{abstract}

\maketitle

\section{Introduction}

For a word-hyperbolic group $G$, a subgroup $H$ is called \textit{quasiconvex} in $G$ if for some (equivalently, for any) finite generating set S of G there exists a uniform constant $N\geq 0$ such that every geodesic in the Cayley graph $\Gamma(G,S)$ of $G$ with respect to $S$ that connects a pair of points in $H$ is contained in the $N$-neighborhood of $H$. 
Quasiconvex subgroups play a significant role in the theory of word-hyperbolic groups where quasiconvexity is equivalent to being finitely generated and undistorted.
Kapovich \cite{K96}  provided  a partial algorithm detecting quasiconvexity of a finitely generated subgroup of a word-hyperbolic group. See also \cite{KMW17}.

\begin{proposition}[Proposition 4 in \cite{K96}]\label{K96}
Let $G$ be a word-hyperbolic group given by a finite presentation $G=\langle x_1,\dots, x_n\,|\,r_1,\dots, r_m \rangle$ and let $S = \{x_1^{\pm 1},\dots , x_n^{\pm 1}\}$.
Then there is a uniform algorithm which, given a finite set of words $v_1,..,v_t$ over $S$, will
\begin{itemize}
\item eventually stop and produce the quasiconvexity constant $\epsilon$ and the distortion constant $C$ of the subgroup $H=gp(\bar{v_1}, \dots, \bar{v_t})$ of $G$ if $H$ is quasiconvex in $G$.
\item run forever if $H$ is not quasiconvex in $G$.
\end{itemize}
\end{proposition}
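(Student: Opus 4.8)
The plan is to present the desired algorithm as a single semi-decision procedure: we enumerate candidate parameters, run a decidable test for each, and halt -- outputting the required constants -- at the first success; the test will be arranged so that success eventually occurs if and only if $H$ is quasiconvex, so that the procedure runs forever otherwise. First, since $G$ is hyperbolic, we run the standard partial algorithm which, on input the finite presentation, searches over increasing candidate constants for a certificate that all geodesic bigons in $\Gamma(G,S)$ up to a bounded perimeter are thin; by Papasoglu's criterion such a certificate proves hyperbolicity and produces a hyperbolicity constant $\delta$ together with a Dehn presentation, and this search halts because $G$ is hyperbolic. From this point we can: solve the word problem in $G$; compute, for any word $u$ over any alphabet mapping to $G$, a geodesic $S$-representative of $\overline{u}$ and the length $|\overline{u}|_S$; solve the word problem, and hence the geodesic problem, in $H$ relative to any explicit finite subset of $G$; and use the effective local-to-global principle for quasigeodesics in $\delta$-hyperbolic spaces, namely that for each $N$ there are computable $B=B(\delta,N)$, $\lambda=\lambda(\delta,N)$, $c=c(\delta,N)$ such that every $B$-local $(N+1,N+1)$-quasigeodesic in $\Gamma(G,S)$ is a global $(\lambda,c)$-quasigeodesic.

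The core step is to reduce quasiconvexity to a decidable condition on finite data. For integers $N,D\ge 1$ put
\[
A(N,D)=\bigl\{\,\overline{u}\ :\ u\ \text{a word over}\ \{v_1^{\pm 1},\dots,v_t^{\pm 1}\},\ |u|\le D,\ |\overline{u}|_S\le N\,\bigr\}\ \subseteq\ H,
\]
a finite set computable by the previous step. We claim $H$ is quasiconvex if and only if the following condition $\mathcal{C}(N,D)$ holds for some $N,D$: \emph{(i)} each $v_i$ equals a word of length $\le D$ over $A(N,D)$, so that $\langle A(N,D)\rangle=H$; and \emph{(ii)} for every $A(N,D)$-geodesic word $a_1\cdots a_m$ with $m\le B(\delta,N)$, the broken geodesic in $\Gamma(G,S)$ obtained by concatenating $S$-geodesics $[1,\overline{a_1}],[\overline{a_1},\overline{a_1a_2}],\dots,[\overline{a_1\cdots a_{m-1}},\overline{a_1\cdots a_m}]$ is an $(N+1,N+1)$-quasigeodesic. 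Since $A(N,D)$ is finite and the broken geodesics in (ii) are explicit finite paths, $\mathcal{C}(N,D)$ is decidable. For soundness, suppose $\mathcal{C}(N,D)$ holds: because any subword of an $A(N,D)$-geodesic word is again $A(N,D)$-geodesic, every broken geodesic over $A(N,D)$ is a $B$-local $(N+1,N+1)$-quasigeodesic, hence a global $(\lambda,c)$-quasigeodesic; applied to an $A(N,D)$-geodesic word for an arbitrary $h\in H$ this gives $|h|_{A(N,D)}\le\lambda|h|_S+\lambda c$, while $|h|_S\le N\,|h|_{A(N,D)}$ is automatic, so the inclusion $H\hookrightarrow G$ is a quasi-isometric embedding and therefore $H$ is quasiconvex, with constants $\epsilon$ and $C$ computable from $N$, $D$, $\delta$ and $\max_i|v_i|_S$. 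For completeness, suppose $H$ is $\epsilon_0$-quasiconvex and set $N=2\epsilon_0+2\delta+2$: projecting an $S$-geodesic between two points of $H$ onto $H$ in $\epsilon_0$-jumps shows that $\{h\in H:|h|_S\le N\}$ generates $H$, and the resulting analysis (using that $H$, with the generating set $\{h\in H:|h|_S\le N\}$, is quasi-isometrically embedded in $G$ and that internal subwords of geodesic words are geodesic) forces (ii); since $H$ is finitely generated there is $D_0$ with $A(N,D_0)=\{h\in H:|h|_S\le N\}$ and with each $v_i$ expressed by a length-$\le D_0$ word over it, so $\mathcal{C}(N,D_0)$ holds.

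The algorithm then enumerates all pairs $(N,D)$, tests $\mathcal{C}(N,D)$, and at the first success outputs the derived $\epsilon$ and $C$ and halts: by the completeness direction it halts whenever $H$ is quasiconvex, and by soundness it halts only then, so it runs forever when $H$ is not quasiconvex, as required. \textbf{The main obstacle} is precisely the reduction carried out above -- isolating a genuinely \emph{decidable} condition equivalent to quasiconvexity. Soundness is a clean application of the effective local-to-global lemma for quasigeodesics in hyperbolic groups; the subtle direction is completeness, where the scale $N$ must be chosen in terms of the \emph{a priori unknown} quasiconvexity constant so that the $S$-short elements of $H$ genuinely witness quasiconvexity through finitely many bounded local checks, and where one exploits that the distortion bound $D_0$ need only \emph{exist} -- which it does because $H$ is finitely generated -- even though no computable bound on it is available. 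This last feature is exactly why the procedure cannot be strengthened to decide non-quasiconvexity: for a distorted $H$ the distortion function is unbounded, so the outer search over $D$ genuinely does not terminate. (Equivalently, one could instead search for, and verify, the finite automaton accepting the regular language of geodesic $S$-representatives of a quasiconvex $H$; the non-terminating behaviour in the distorted case is unavoidable for the same reason.)
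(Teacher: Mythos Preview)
The paper does not supply a proof of this proposition: it is quoted from \cite{K96} as motivating background (see the sentence immediately following it), and no argument for it appears anywhere in the present paper. There is therefore no in-paper proof to compare against.

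That said, your reconstruction is essentially the argument of \cite{K96} itself, and is the template the present paper reuses in Section~\ref{2} and Section~\ref{3}: compute a hyperbolicity constant for $G$, invoke the effective local-to-global principle (Lemma~\ref{lem}) to convert ``$H\hookrightarrow G$ is a quasi-isometric embedding'' into a decidable finite check indexed by candidate parameters, and semi-decide by diagonal enumeration. Your soundness and completeness directions are correct in substance. One cosmetic point: with the broken geodesic parameterized by $S$-arc-length, the constants your completeness computation actually yields are closer to $(N,\,2N{+}2)$ than to $(N{+}1,\,N{+}1)$, since the additive error at non-vertex endpoints can be as large as $2N$; this is harmless, as any fixed computable pair of constants depending on $N$ works after adjusting $B(\delta,N)$. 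A second cosmetic point: rather than introducing the auxiliary bound $D$, Kapovich's original argument simply enumerates finite subsets $A\subseteq H$ of bounded $S$-diameter directly (equivalently, takes $D\to\infty$ inside each fixed $N$); your two-parameter search is an equivalent reorganization.
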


Since the above algorithm does not detect non-quasiconvex subgroups, one might ask the following question.

\begin{question}\label{q0}
Given a word-hyperbolic group $G$, is there an algorithm that, for a finite subset $S$ of $G$, decides whether or not $H=\langle S\rangle$ is quasiconvex in G?
\end{question}

In general, the answer is no, see  \cite{BW01}. 
For an arbitrary finitely generated group, there are two recently introduced generalizations  of the notion of a quasiconvex subgroup of a word-hyperbolic group, namely a \textit{stable} subgroup and a \textit{Morse} (or \textit{strongly quasiconvex}) subgroup.

\begin{Definition}[Stable subgroups \cite{DT15}]
Let $G$ be a finitely generated group and let $H$ be a finitely generated subgroup of $G$. We say that $H$ is \textit{stable} in $G$ if $H$ is undistoreted in $G$ and if for every (equivalently, some) finite generating set $S$ of $G$ and for every $k\geq1$ and $c\geq0$ there is some $L=L(S,k,c)$ such that for every pair of $(k,c)-$quasigeodesics in $G$ with the same endpoints on $H$, each of these two qausigeodesics is contained in the $L$-neighborhood of the other.
\end{Definition}

\begin{Definition}[Morse subgroups \cite{T17, G17}]
\textup{Let $G$ be a finitely generated group and let $H$ be a subgroup of $G$. We say that $H$ is a \textit{Morse} subgroup of $G$ if for every (equivalently, some) finite generating set $S$ of $G$, for every $k\geq1$ and $c\geq0$ there is some $M=M(k,c)$ such that every $(k,c)-$quasigeodesic in $G$ with endpoints on $H$ is contained in the $M$-neighborhood of $H$.}
\end{Definition}

Being stable implies being Morse, but in general stability and Morseness are not equivalent.
A stable subgroup $H$ of  a finitely generated group $G$ is necessarily word-hyperbolic but every subgroup $H$ of finite index in G is Morse. For example, $\mathbb{Z}\times\mathbb{Z}$ is a non-stable Morse subgroup of itself. Tran \cite{T17} proved that a finitely generated subgroup $H$ of a finitely generated group $G$ is stable if and only if $H$ is hyperbolic and Morse in $G$. Motivated by Proposition \ref{K96} and Question \ref{q0}, we are interested in the following questions.

\begin{question}\label{q}
Let $G$ be a finitely generated group from a particular class of groups. Is there a partial algorithm that, for a finite subset $S$ of $G$, is guaranteed to terminate in finite time and output the answer whether $H=\langle S\rangle$ is a stable subgroup of $G$ but may run forever if $H$ is not stable?
Is there a complete algorithm that decides whether or not a finitely generated subgroup $H$ of $G$ is stable?
\end{question}

\begin{question}\label{q1}
What if we replace stable by Morse in Question \ref{q}?
\end{question}

The first main theorem answers to the above questions for mapping class groups as follows.

\begin{theore}\label{main1}
\textit{
Let $S$ be an oriented, connected, finite type surface with $\chi(S)<0$ which is neither the 1-punctured torus nor the 4-punctured sphere and let \textup{Mod$(S)$} be the mapping class group of $S$. }

\begin{enumerate}[{(i)}]
\item\label{main1-1}\textit{There is a partial algorithm which,  for a subgroup $H$ of  \textup{Mod$(S)$} given by a finite generating set, will terminate if $H$ is stable in \textup{Mod$(S)$} and run forever if $H$ is not stable in \textup{Mod$(S)$.} }

\item\label{main1-2}\textit{There is a complete algorithm which, for an undistorted subgroup $H$ of \textup{Mod$(S)$}, decides whether or not $H$ is stable.}

\item\label{main1-3}\textit{There is a partial algorithm which, for a finitely generated subgroup $H$ of  \textup{Mod$(S)$} given by a finite generating set, will terminate if $H$ is Morse in \textup{Mod$(S)$} and run forever if $H$ is not Morse in \textup{Mod$(S)$.} }

\end{enumerate}
\end{theore}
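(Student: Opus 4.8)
The plan is to reduce stability and Morseness in $\mathrm{Mod}(S)$ to a condition that is algorithmically checkable, using the known geometric characterizations of stable and Morse subgroups of mapping class groups. The key external input is the work of Durham--Taylor characterizing stable subgroups of $\mathrm{Mod}(S)$ as exactly the \emph{convex cocompact} subgroups (in the sense of Farb--Mosher), together with Kent--Leininger's and Hamenst\"adt's characterization that $H$ is convex cocompact if and only if some (any) orbit map of $H$ into the curve complex $\mathcal{C}(S)$ is a quasi-isometric embedding, and if and only if $H$ is finitely generated and purely pseudo-Anosov (this last equivalence in the form needed is due to recent work; one needs the hypothesis that $S$ is not the once-punctured torus or four-punctured sphere so that $\mathcal{C}(S)$ is connected and the standard machinery applies). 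For the Morse part, I would invoke the analogue for Morse subgroups: a subgroup of $\mathrm{Mod}(S)$ is Morse if and only if it is finitely generated and \emph{stable}, or more precisely the recent classification (Kim, or Genevois) showing Morse subgroups of $\mathrm{Mod}(S)$ are either finite, or virtually cyclic generated by a pseudo-Anosov, or convex cocompact --- in all cases reducible to purely pseudo-Anosov-type conditions.

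First I would set up the algorithmic framework: $\mathrm{Mod}(S)$ has solvable word problem and is given by an explicit finite presentation, so we can enumerate the elements of $H = \langle S \rangle$ and we can algorithmically compute, for any element $g \in \mathrm{Mod}(S)$, its Nielsen--Thurston type (periodic, reducible, or pseudo-Anosov) --- this is classical and effective. For part (i), the partial algorithm runs two searches in parallel. The first search tries to verify that $H$ is convex cocompact: enumerate larger and larger balls and check whether the orbit map to $\mathcal{C}(S)$ is a quasi-isometric embedding with better and better constants --- since $\mathcal{C}(S)$ is a hyperbolic space on which $\mathrm{Mod}(S)$ acts with computable distances (via Leininger's or Birman--Margalit's effective bounds, or simply bounding distance in $\mathcal{C}(S)$ by intersection numbers which are computable), one can semi-decide that an orbit map is a quasi-isometric embedding. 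The second search tries to \emph{disprove} convex cocompactness by finding a non-pseudo-Anosov element of infinite order in $H$, or more robustly by finding a conjugate of a power of an element generating with another a subgroup that fails the QI-embedding property; since a finitely generated subgroup of $\mathrm{Mod}(S)$ fails to be convex cocompact iff it contains an infinite-order non-pseudo-Anosov element (reducible or finite order mixed appropriately) OR it contains a pseudo-Anosov but the whole group is not undistorted --- I need to be careful here, and the cleanest route is: $H$ is convex cocompact iff $H$ is finitely generated and every infinite-order element is pseudo-Anosov and $H$ is word-hyperbolic. Enumerating elements of $H$ and testing Nielsen--Thurston type semi-decides the existence of a bad (infinite-order non-pA) element; and if all elements are pA, one keeps improving the orbit-map constants until the QI-embedding is certified. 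Running both in parallel gives termination exactly when $H$ is stable.

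For part (ii), the complete algorithm: given the promise that $H$ is undistorted, $H$ is stable iff $H$ is convex cocompact iff $H$ is purely pseudo-Anosov (an undistorted purely pseudo-Anosov finitely generated subgroup is convex cocompact by Durham--Taylor / Kent--Leininger). Now ``$H$ is purely pseudo-Anosov'' is co-semi-decidable (search for an infinite-order non-pA element), and under the undistortedness promise it is also semi-decidable: if $H$ is purely pseudo-Anosov and undistorted then the orbit map to $\mathcal{C}(S)$ is a QI-embedding, which we certify as above; if $H$ is not purely pseudo-Anosov we eventually find the witness. Running both searches, one always halts. For part (iii), the Morse partial algorithm: by the classification of Morse subgroups of $\mathrm{Mod}(S)$, $H$ is Morse iff $H$ is finite, or virtually-$\mathbb{Z}$ generated by a pseudo-Anosov, or convex cocompact. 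The first two are semi-decidable (enumerate to find a finite-index cyclic subgroup generated by a pA, checkable since we can test pA-ness and since finiteness of a f.g.\ subgroup of $\mathrm{Mod}(S)$ is decidable), and convex cocompactness is semi-decidable as in part (i); take the parallel union. The main obstacle, and where I would spend the most care, is establishing that the relevant hyperbolicity / QI-embedding certificate for $\mathcal{C}(S)$ is genuinely \emph{semi-decidable with effective constants} --- i.e., that one can compute distances (or good enough estimates) in the curve complex and thereby certify a quasi-isometric embedding in finite time --- and dually that the set of infinite-order non-pseudo-Anosov elements of a f.g.\ subgroup is effectively enumerable; both rest on the algorithmic theory of mapping class groups (Nielsen--Thurston decidability, computability of geometric intersection numbers, effective hyperbolicity of $\mathcal{C}(S)$) and must be assembled carefully, since without the effective-constants input neither the ``halt when stable'' nor the ``halt when Morse'' guarantee can be made rigorous.
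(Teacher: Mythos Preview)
Your treatment of part~(ii) is essentially the paper's: run the stability-detection procedure from~(i) in parallel with an enumeration of $H$ looking for a non-pseudo-Anosov element, using that for undistorted $H$ stability is equivalent to being purely pseudo-Anosov.

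However, there are two genuine problems elsewhere.

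\textbf{Part~(iii): wrong characterization of Morse subgroups.} Your claimed trichotomy ``finite, virtually-$\mathbb{Z}$ generated by a pseudo-Anosov, or convex cocompact'' is incorrect: all three of these are already subsumed by \emph{stable}, and you have omitted the crucial remaining case. The actual result (Kim; also Russell--Spriano--Tran) is that a finitely generated $H\le\mathrm{Mod}(S)$ is Morse if and only if $H$ is stable \emph{or} $H$ has finite index in $\mathrm{Mod}(S)$. Any finite-index subgroup of $\mathrm{Mod}(S)$ is Morse but, since $\mathrm{Mod}(S)$ is not hyperbolic, is not convex cocompact and contains many infinite-order reducible elements. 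Your proposed algorithm would run forever on such subgroups. The paper's algorithm for~(iii) runs the stability detector from~(i) in parallel with the Todd--Coxeter coset enumeration procedure, which terminates exactly when $[\mathrm{Mod}(S):H]<\infty$.

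\textbf{Part~(i): missing mechanism and confused specification.} First, a partial algorithm that must \emph{run forever} when $H$ is not stable cannot include a search that halts upon finding a non-pseudo-Anosov element; your ``second search'' belongs only in part~(ii), not~(i). More substantively, you assert that one can ``semi-decide that an orbit map is a quasi-isometric embedding'' by ``enumerating larger and larger balls and checking \ldots\ with better and better constants,'' but you never say what makes this a \emph{finite} check for any fixed candidate constants. The paper's mechanism is the local-to-global principle for quasigeodesics in $\delta$-hyperbolic spaces: for each pair $(\lambda,\epsilon)$ there is a computable $K=K(\delta,\lambda,\epsilon)$ such that if the image of every geodesic of length $\le K$ in the Cayley graph of $H$ is a $(\lambda,\epsilon)$-quasigeodesic in $\mathcal{C}(S)$, then \emph{all} such images are $(\lambda',\epsilon')$-quasigeodesics for computable $(\lambda',\epsilon')$. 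This reduces certification of the QI-embedding to checking finitely many paths, using the solvability of the word problem in $\mathrm{Mod}(S)$ and the algorithmic computability of distances in $\mathcal{C}(S)$. Without this reduction (or an equivalent one), your first search has no termination criterion.
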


A finitely generated subgroup $H$ of Mod($S$) is stable if and only if its orbit into the curve graph $\mathcal{C}(S)$ is a quasi-isometrically embedding. It is known that the distances in $\mathcal{C}(S)$ can be computed algorithmically \cite{B06}.
For part (\ref{main1-1}) of Theorem \ref{main1}, we use the ``local-to-global" principle to detect subgroups $H$ of Mod$(S)$ with quasi-isometrically embedded orbits in $\mathcal{C}(S)$.
For an undistorted subgroup $H$ of Mod($S$), it is known that $H$ is stable in Mod($S$) if and only if $H$ is purely loxodromic \cite{DT15, BKKL18}. Therefore, for the proof of (\ref{main1-2}), we run a partial algorithm in (\ref{main1-1}) and in parallel, look for a non-loxodromic element in $H$.
It is known that a finitely generated subgroup $H$ of Mod$(S)$ is Morse if and only if $H$ is stable or has finite index in Mod($S$) \cite{K19}. For part (\ref{main1-3}), we run a partial algorithm in (\ref{main1-1}) and in parallel, run the Todd-Coxeter algorithm to detect finite index of a finitely generated subgroup.

The second main theorem answers to Question \ref{q} and Question \ref{q1} for right-angled Artin groups.

\begin{theore}\label{main2}
\textit{Let $\Gamma$ be a finite connected and anti-connected graph with at least two vertices. }
\begin{enumerate}[{(i)}]
\item\label{main2-1}\textit{There is a complete algorithm which, for a subgroup $H$ of $A_\Gamma$ given by a finite generating set, will terminate and determine whether or not $H$ is stable in $A_\Gamma$.}
\item\label{main2-2}\textit{There is a partial algorithm which, for a subgroup $H$ of $A_\Gamma$ given by a finite generating set, will terminate if $H$ is Morse in $A_\Gamma$ and run forever if $H$ is not Morse in $A_\Gamma$. }
\end{enumerate}
\end{theore}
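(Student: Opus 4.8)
The plan is to reduce stability of a finitely generated subgroup $H\le A_\Gamma$ to a purely algebraic condition both of whose truth values are semi-decidable, and then to treat Morseness by adding a parallel run of coset enumeration. I work with the extension graph $\Gamma^e$ of $\Gamma$ (vertices: the $A_\Gamma$-conjugates of the standard generators of $A_\Gamma$; edges: commutation); since $\Gamma$ is connected and anti-connected with at least two vertices, $A_\Gamma$ acts acylindrically, with unbounded orbits, on the hyperbolic space $\Gamma^e$. By Koberda--Mangahas--Taylor \cite{KMT}, a finitely generated $H\le A_\Gamma$ is stable if and only if it is purely loxodromic for this action, equivalently if and only if some (hence any) orbit map $H\to\Gamma^e$ is a quasi-isometric embedding; unlike the mapping class group case, no undistortedness hypothesis is needed here, and this is exactly why part (i) yields a complete (rather than merely partial) algorithm. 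I also use that loxodromicity is decidable: an element $g\in A_\Gamma$ fails to act loxodromically on $\Gamma^e$ precisely when it is conjugate into a star subgroup $A_{\mathrm{st}(v)}$ for some vertex $v$; passing to a cyclically reduced conjugate of $g$ (which one can compute, the conjugacy problem in $A_\Gamma$ being solvable) and using that the support of a cyclically reduced element is a conjugacy invariant, this reduces to checking whether $\mathrm{supp}(g)$ is contained in some $\mathrm{st}(v)$, which is finite data.

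For part (i) I would run two procedures in parallel. The first semi-decides stability: it enumerates a scale parameter $R$ and, for each $R$, carries out the finite check of whether the orbit map sends every geodesic of length at most $R$ in the Cayley graph $\Gamma(H,S)$ to a local quasigeodesic of $\Gamma^e$ with the constants prescribed by the stability-of-quasigeodesics lemma in the $\delta$-hyperbolic space $\Gamma^e$. As soon as the check succeeds for some $R$, the local-to-global principle --- the same one underlying Theorem~\ref{main1} --- promotes this to the statement that the orbit map is a global quasi-isometric embedding, so the procedure halts and reports ``stable''. If $H$ is stable the orbit map is a quasi-isometric embedding, so the necessary scale is eventually reached; and any successful check certifies stability. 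The second procedure semi-decides non-stability: it enumerates the elements of $H$ as words over $S$, tests each for loxodromicity as above, and halts with output ``not stable'' the first time it produces a nontrivial non-loxodromic element. Because $H$ is stable exactly when it is purely loxodromic \cite{KMT}, exactly one of these two procedures halts, and together they decide stability.

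For part (ii) I would use the dichotomy that a finitely generated $H\le A_\Gamma$ is Morse if and only if $H$ is stable or $H$ has finite index in $A_\Gamma$ --- the right-angled Artin group counterpart of the mapping class group statement \cite{K19}; compare \cite{T17, G17}. The partial algorithm runs the complete algorithm of part (i) together with the Todd--Coxeter coset enumeration for $A_\Gamma$ relative to $H$, and halts with output ``Morse'' as soon as the former reports ``stable'' or the latter terminates (which occurs precisely when $[A_\Gamma:H]<\infty$). If $H$ is Morse, it is stable or of finite index, so one of the two subroutines halts and the answer is correct; if $H$ is not Morse, it is neither stable nor of finite index, the coset enumeration never terminates, and the algorithm runs forever, which is all that is claimed. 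One should not expect completeness here, since ``having infinite index'' is not semi-decidable.

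I expect the parallel-search packaging, the appeal to Todd--Coxeter, and the local-to-global bookkeeping (which mirrors the mapping class group argument for Theorem~\ref{main1}) to be routine. The main obstacle, as in the mapping class group setting, is making the computations inside the auxiliary hyperbolic space effective: one must be able to compute distances in $\Gamma^e$, or estimate them within a controlled additive error, well enough to verify the local quasigeodesic condition at each scale, and one must establish decidability of conjugacy into the star subgroups $A_{\mathrm{st}(v)}$. Both have to be reduced to normal-form computations in $A_\Gamma$, using the explicit combinatorial model of the extension graph together with the solvability of the word and conjugacy problems.
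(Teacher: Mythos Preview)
Your overall architecture---local-to-global for the orbit map into $\Gamma^e$ to semi-decide stability, a search for non-loxodromic elements to semi-decide non-stability, and Todd--Coxeter alongside the stability decision for the Morse case---matches the paper's first proof of (i) and its proof of (ii). The effectivity obstacle you flag (computing distances in $\Gamma^e$) is exactly what the paper has to work for: it passes to the quasi-isometric \emph{star metric} $d_*$ on $A_\Gamma$ and proves that the star length of an element can be computed from its normal forms, so the local quasigeodesic check becomes a finite computation entirely inside $A_\Gamma$.

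There is, however, a genuine error in your loxodromicity test. You assert that $g$ fails to act loxodromically on $\Gamma^e$ precisely when it is conjugate into some star subgroup $A_{\mathrm{st}(v)}$, i.e.\ when the support of a cyclically reduced conjugate lies in a star. The correct criterion is conjugacy into a \emph{join} subgroup (equivalently, the support of a cyclically reduced form spans a nontrivial join subgraph), and join subgraphs need not lie in any star. Take $\Gamma$ on vertices $a,b,c,d,e$ with edge set $\{ab,ac,bd,cd,de\}$: this graph is connected and anti-connected, and $\{a,b,c,d\}$ spans a $4$-cycle, which is the join $\{a,d\}*\{b,c\}$ but is contained in no $\mathrm{st}(v)$. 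Any cyclically reduced $g$ with support $\{a,b,c,d\}$ (for instance $g=adbc$) is therefore elliptic, yet your test wrongly declares it loxodromic. On $H=\langle g\rangle$ your non-stability procedure never finds a witness and never halts; since $H$ is not stable, the stability procedure never halts either, and your ``complete'' algorithm runs forever on this input. The fix is immediate---check whether the support lies in some nontrivial join subgraph of $\Gamma$ rather than in some star---but as written the argument fails.
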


We provide two different algorithms for Theorem \ref{main2}(\ref{main2-1}).
The first algorithm uses the \textit{extension graph} $\Gamma ^e$ \cite{KK13,KK14} and ``star metric'' on $A_\Gamma$ which is quasi-isometric to  $\Gamma ^e$ and comparable with the standard normal form. 
A finitely generated subgroup $H$ of $A_\Gamma$ is stable if and only if its orbit into $\Gamma^e$ with the graph metric is a quasi-isometrically embedding if and only if $H$ is purely loxodromic.
Thus, the first algorithm for Theorem \ref{main2}(\ref{main2-1}) proceeds by running a partial algorithm detecting quasiconvexity of an orbit $H$ in $\Gamma^e$ and in parallel, iterating elements to check that there is a non-loxodromic element to detect non-stability of $H$.
For the second algorithm for Theorem \ref{main2}(\ref{main2-1}), we look for a cube complex in \cite{MT16} encoding all infinite order elements in $H$ and check whether or not a closed loop labeled by a join word, i.e., non-loxodromic element.
Note that under the assumptions of Theorem \ref{main2}, a subgroup $H$ is Morse if and only if  either $H$ is stable or $H$ has finite index in $G$ \cite{T17,G17}.
For part (\ref{main2-2}), we run a partial algorithm in (\ref{main2-1}) and in parallel, run the Todd-Coxeter algorithm to detect finite index of a finitely generated subgroup.

We now consider toral relatively hyperbolic groups for Question \ref{q} and Question \ref{q1}.
A finitely generated group $G$ is called a \textit{toral relatively hyperbolic group} if $G$ is torsion-free and hyperbolic relative to a (possibly empty) finite collection $\mathbb{P}$ of finitely generated free abelian non-cyclic subgroups of $G$.

\begin{theore}\label{main3}
\textit{Let $(G, \mathbb{P})$ be a toral relatively hyperbolic group. }
\begin{enumerate}[{(i)}]
\item\label{main3-1}\textit{There is a partial algorithm which, for a subgroup $H$ of $G$ given by a finite generating set, will terminate if $H$ is stable in $G$ and run forever if $H$ is not stable in $G$.}

\item\label{main3-2}\textit{There is a complete algorithm which, for an undistorted subgroup $H$ of $G$, decides whether or not $H$ is stable.}

\item\label{main3-3}\textit{There is a partial algorithm which, for a subgroup $H$ of $G$ given by a finite generating set, will terminate if $H$ is Morse in $G$ and run forever if $H$ is not Morse in $G$. }

\item\label{main3-4}\textit{There is a complete algorithm which, for an undistorted finitely generated subgroup $H$ of $G$, decides whether or not $H$ is Morse.}
\end{enumerate}

\end{theore}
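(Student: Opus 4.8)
The plan is to combine the partial algorithm of part~(\ref{main3-3}) with the structural description of Morse subgroups of relatively hyperbolic groups, using the hypothesis that $H$ is undistorted to upgrade the one-sided procedure into a decision procedure.

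First, recall the relevant structure theory. A finitely generated undistorted subgroup $H$ of a toral relatively hyperbolic group $(G,\mathbb P)$ is relatively quasiconvex, and a relatively quasiconvex subgroup carries a canonical \emph{induced peripheral structure}: there are only finitely many $H$-conjugacy classes of infinite subgroups of the form $H\cap gPg^{-1}$ with $g\in G$ and $P\in\mathbb P$, represented by pairs $g_1P_1,\dots,g_kP_k$, and $(H,\{H\cap g_iP_ig_i^{-1}\}_i)$ is again relatively hyperbolic. The characterization I would use is that a relatively quasiconvex subgroup $H\le G$ is Morse in $G$ if and only if for every $g\in G$ and every $P\in\mathbb P$ with $H\cap gPg^{-1}$ infinite, the index $[gPg^{-1}:H\cap gPg^{-1}]$ is finite (cf.\ \cite{T17}, see also \cite{K19}); the point is that inside the free abelian group $gPg^{-1}$ the only Morse subgroups are the finite-index ones and the trivial one. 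Since $(G,\mathbb P)$ is toral, each $gPg^{-1}$ is free abelian of some rank $n$, so the condition ``$H\cap gPg^{-1}$ has finite index in $gPg^{-1}$'' amounts to ``$H\cap gPg^{-1}$ has rank exactly $n$'', which is decided by a finite computation over $\mathbb Z$ once a generating set of $H\cap gPg^{-1}$ is known.

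Next, I would make this effective. For toral relatively hyperbolic groups the membership problem for relatively quasiconvex subgroups is solvable and one has effective control on the relative quasiconvexity constant of $H$ in terms of its given generating set; this is exactly the input already exploited by the partial algorithm of parts~(\ref{main3-1}) and~(\ref{main3-3}), which searches for and certifies such a constant and then verifies that every infinite induced peripheral subgroup is full. Using this, from the generating set of $H$ one computes in finite time a finite list of double-coset representatives $g_1P_1,\dots,g_kP_k$ and, for each $i$, a finite generating set of $H\cap g_iP_ig_i^{-1}\le g_iP_ig_i^{-1}\cong\mathbb Z^{n_i}$ (the ambient peripheral conjugate and its rank $n_i$ are also computable). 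The crucial observation is that the \emph{undistortedness promise} guarantees this computation terminates: $H$ is genuinely relatively quasiconvex, so the search for a quasiconvexity certificate succeeds, and the induced peripheral structure is then produced in full. Equivalently, one may run the partial algorithm of part~(\ref{main3-3}) in parallel with a search for a pair $(g,P)$ for which $H\cap gPg^{-1}$ is infinite of rank strictly less than $\mathrm{rk}(gPg^{-1})$; under the undistortedness hypothesis exactly one of the two searches halts.

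Finally, with the induced peripheral structure in hand the algorithm checks, for each $i$, whether $\mathrm{rk}(H\cap g_iP_ig_i^{-1})=n_i$, and outputs ``$H$ is Morse in $G$'' if all these equalities hold and ``$H$ is not Morse in $G$'' otherwise; correctness is immediate from the characterization above. The main obstacle is the effectivity step of the previous paragraph, namely extracting the \emph{complete} induced peripheral structure of a relatively quasiconvex subgroup of a toral relatively hyperbolic group, which rests on the solvability of the membership problem for such subgroups together with an effective relative quasiconvexity constant; by contrast, the reduction of Morseness to a finite rank computation in free abelian groups, and the bookkeeping to see that the undistortedness promise makes the certificate search terminate, are routine.
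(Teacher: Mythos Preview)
There is a genuine gap in your argument, and it sits precisely at the step you flag as the ``main obstacle'': extracting the full induced peripheral structure of $H$. You assert that once $H$ is known to be relatively quasiconvex (which the undistortedness promise guarantees), one can compute in finite time representatives $g_1P_1,\dots,g_kP_k$ together with finite generating sets for the intersections $H\cap g_iP_ig_i^{-1}$. But the algorithm from \cite{KMW17} that performs this computation (stated in the paper as Theorem~\ref{KMW17-3}) halts if and only if $H$ is relatively quasiconvex \emph{and has peripherally finite index}; and by Corollary~\ref{corollary}, peripherally finite index is exactly the Morse condition you are trying to decide. So your procedure halts and outputs the correct answer when $H$ is Morse, but runs forever when $H$ is not Morse---that is, it is just the partial algorithm of part~(\ref{main3-3}) again. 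The same obstruction kills the intersection approach via Proposition~\ref{4.14}: that result computes $H\cap K$ only when \emph{both} inputs have peripherally finite index, and $H$ need not.

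Your parallel-search alternative has the same defect. Searching for $(g,P)$ with $H\cap gPg^{-1}$ infinite of rank strictly less than $\mathrm{rk}(P)$ requires certifying that the rank is small, i.e.\ that certain elements of $gPg^{-1}$ do \emph{not} lie in $H$. Even granting a membership oracle for $H$ (which \cite{KW19} provides for relatively quasiconvex $H$), you can only enumerate elements of the intersection; without an a priori bound on where generators of $H\cap gPg^{-1}$ live inside $\mathbb Z^n$, you never know that you have found them all, so you cannot certify small rank in finite time. The paper explicitly notes that the membership result of \cite{KW19} does not imply Theorem~\ref{main3}(\ref{main3-4}) for this reason.

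The paper's actual proof of part~(\ref{main3-4}) takes a substantially different route to detect non-Morseness: it enumerates candidate \emph{Dehn fillings} $\pi:G\to\bar G$ and looks for one that is ``benign with respect to $H$'' (Definition~\ref{benign}), meaning in particular that $\pi(H)$ \emph{does} have peripherally finite index in $\bar G$, so that the \cite{KMW17} machinery applies to $\pi(H)$ and one can compute $\pi(H)^{\pi(g)}\cap\pi(P)$. Proposition~\ref{benignproposition}, proved using the $H$-wide filling technology of \cite{GM17}, shows that such a benign filling exists if and only if $H$ fails to have peripherally finite index. This is the missing idea: rather than computing the peripheral structure of $H$ directly (which one cannot), one passes to a quotient where the image of $H$ is tractable and where the failure of Morseness survives in a detectable form.
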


Tran \cite{T17} gave complete characterizations of stability and Morseness of an undistorted subgroup $H$ of a relatively hyperbolic group $(G, \mathbb{P})$. These characterizations involve checking the properties of all infinite intersections $H\cap P^g$, where $g\in G$ and $P\in \mathbb{P}$.
For part (\ref{main3-1}) of Theorem \ref{main3}, we combine these results of Tran with recent algorithmic results of Kharlampovich, Myasnikov and Weil \cite{KMW17} about toral relatively hyperbolic groups.
When an undistorted subgroup is given as (\ref{main3-2}), we run the algorithm in (\ref{main3-1}) and in parallel, detect non-stability by Tran's characterization of stability. The approach to (\ref{main3-3}) is similar. For part (\ref{main3-4}), we run the algorithm in part (\ref{main3-3}) to detect Morseness and in parallel, run a partial algorithm detecting non-Morseness by using relatively hyperbolic Dehn fillings \cite{O07,GM08, GM17}. 
Specifically, we use the results of Groves and Manning \cite{GM17} on the behavior of relatively quasiconvex subgroups under Dehn fillings.
Producing an algorithm for detecting non-Morseness of undistorted finitely generated subgroups in $G$  is the most  involved portion of the proof of Theorem \ref{main3}(\ref{main3-4}) since it requires iteratively applying the above procedures to groups obtained from $G$ by hyperbolic Dehn fillings.

Note that a new result of Kharlampovich and Weil (Theorem 2 in \cite{KW19}) implies that if $(G,\mathbb{P})$ is a toral relatively hyperbolic group then there is an algorithm which, for given $g, h_1,\dots ,h_n\in G$, decides whether or not $g\in H=\langle h_1,\dots ,h_n\rangle$, assuming that $H$ is relatively quasiconvex. 
This result is related to, but does not imply, our Theorem \ref{main3}(\ref{main3-4}). The proof of Theorem 2 in \cite{KW19} utilizes an element-wise separability result of Manning and Martinez-Perdoza \cite{MM10}. That theorem implies that if $H$ is relatively quasiconvex in a toral relatively hyperbolic group $G$ and if $g\in G\setminus H$, then there exists a Morse subgroup $H_1\le G$ such that  $H\le H_1$ and $g\not\in H_1$.

Theorem  \ref{main3}(\ref{main3-4}) also has the following useful corollary:

\begin{corollar}\label{main5}
Let $(G,\mathbb{P})$ be a toral relatively hyperbolic group. Then there exists an algorithm that, given an undistorted finitely generated subgroup $H$ of $G$, decides whether or not $H$ has finite index in $G$.
\end{corollar}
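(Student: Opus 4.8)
\textit{Proof plan.} The plan is to reduce, using Theorem \ref{main3}(\ref{main3-4}), to the case of a Morse (hence relatively quasiconvex) subgroup, and then to decide finiteness of the index by running two semi-algorithms in parallel: coset enumeration, which halts exactly when the index is finite, against a procedure that halts exactly when the index is infinite. First I would record the trivial implications: if $[G:H]<\infty$ then $H$ is Morse in $G$, since finite-index subgroups are always Morse, and moreover $[P^{g}:H\cap P^{g}]\le [G:H]<\infty$ for every $P\in\mathbb{P}$ and $g\in G$. The algorithm therefore begins by running in parallel (a) the Todd--Coxeter coset enumeration for the pair $(G,H)$, which terminates and outputs the value of $[G:H]$ if and only if this index is finite (note $G$ is finitely presented), and (b) the complete algorithm of Theorem \ref{main3}(\ref{main3-4}), which always halts and decides whether $H$ is Morse. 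If (b) answers ``not Morse'', then by the previous remark $[G:H]=\infty$ and we halt with that answer. So from now on we may assume $H$ is Morse; since $H$ is also undistorted and finitely generated, it is relatively quasiconvex in $(G,\mathbb{P})$ (in the toral case every subgroup of a peripheral $\mathbb{Z}^{n}$ is undistorted in it, so undistorted finitely generated subgroups coincide with relatively quasiconvex ones; alternatively, strongly quasiconvex subgroups of relatively hyperbolic groups are relatively quasiconvex).

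It remains, in this relatively quasiconvex case, to supply procedure (c): a semi-algorithm that halts if and only if $[G:H]=\infty$. Running (c) in parallel with (a), exactly one of the two terminates, and the Corollary follows. For (c) I would use the standard dichotomy that a relatively quasiconvex subgroup $H$ of $(G,\mathbb{P})$ has finite index in $G$ if and only if its limit set $\Lambda H$ is all of the Bowditch boundary $\partial(G,\mathbb{P})$, whereas an infinite-index relatively quasiconvex subgroup has limit set with empty interior; thus $[G:H]=\infty$ is witnessed by a nonempty open subset of $\partial(G,\mathbb{P})$ disjoint from $\Lambda H$. Using the finite, effectively computable relative Stallings/core graph of $H$ of Kharlampovich--Myasnikov--Weil \cite{KMW17}, procedure (c) searches for such a witness: it enumerates balls in the cusped space of $(G,\mathbb{P})$ (equivalently, in the coned-off Cayley graph together with the horoball data) and checks, via the core graph, whether the shadow of some ball in $\partial(G,\mathbb{P})$ avoids $\Lambda H$ — equivalently, it searches for a finite certificate that the core graph of $H$ fails to complete to a finite cover. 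If $[G:H]=\infty$ such a certificate exists and is eventually found; if $[G:H]<\infty$ no certificate exists, (c) runs forever, but then (a) halts.

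The main obstacle is making procedure (c) effective: one must show that when $[G:H]=\infty$ a witness of the above kind is computable, with size bounded in terms of the quasiconvexity data of $H$ (the relative Stallings graph and a quasiconvexity constant produced by the algorithms of \cite{KMW17}). This is the point at which, exactly as in the proof of Theorem \ref{main3}(\ref{main3-4}), I would expect to be led to the relatively hyperbolic Dehn filling machinery \cite{O07,GM08,GM17}: pass to a sufficiently long hyperbolic Dehn filling $\bar{G}$ of $(G,\mathbb{P})$ in which, by Groves--Manning \cite{GM17}, the image $\bar{H}$ remains quasiconvex with controlled constants, reducing the search for a boundary gap to the analogous, more tractable problem for quasiconvex subgroups of torsion-free hyperbolic groups; the delicate part is choosing the filling long enough relative to the quasiconvexity constants of $H$ so that no boundary gap is created or destroyed. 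Once this effectivity statement is established, the parallel combination of (a), (b) and (c) decides whether $H$ has finite index in $G$.
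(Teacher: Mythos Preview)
Your overall architecture matches the paper exactly: first decide Morseness via Theorem~\ref{main3}(\ref{main3-4}); if $H$ is not Morse declare infinite index; otherwise run Todd--Coxeter in parallel with a semi-algorithm (c) that halts iff $[G:H]=\infty$. The gap is entirely in your procedure (c).

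Your proposal for (c) --- searching for a ``boundary gap'' via shadows in the cusped space, or for a failure of the core graph to complete to a finite cover, or passing to a Dehn filling --- is not a procedure but a sketch of several possible strategies, none of which you make effective. You yourself flag this (``The main obstacle is making procedure (c) effective''), and the Dehn filling idea you float does not obviously help: you would still need, in the hyperbolic quotient $\bar G$, an effective certificate that $[\bar G:\bar H]=\infty$, which is the same problem one level down. So as written, (c) is not an algorithm.

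The paper's (c) is much more concrete and avoids all of this. The key observation is Proposition~\ref{loxodromicelement}: if $H$ is relatively quasiconvex with $[G:H]=\infty$, then there exists a \emph{loxodromic} $g\in G$ with $H\cap\langle g\rangle=\{1\}$. (This is a short boundary argument: $\Lambda H\subsetneq\partial G$ has a complementary open set, and poles of loxodromics are dense.) Procedure (c) then simply enumerates loxodromic elements $g$ --- loxodromicity is decidable by Osin --- and for each one computes $H\cap\langle g\rangle$ using Proposition~\ref{4.14}, which applies because both $H$ (Morse, hence peripherally finite index) and $\langle g\rangle$ (infinite cyclic, hence trivially peripherally finite index) are relatively quasiconvex with peripherally finite index. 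If some $H\cap\langle g\rangle$ is trivial, halt and declare $[G:H]=\infty$. Proposition~\ref{loxodromicelement} guarantees this search succeeds exactly when $[G:H]=\infty$. No Dehn filling, no boundary shadows, no effectivity estimates are needed.
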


Let $G$ be a finitely generated group discriminated by a locally quasiconvex torsion-free hyperbolic group. Then $G$ is a toral relatively hyperbolic group and every finitely generated subgroup $H$ of $G$ is undistorted (see Lemma 4.6 below). Thus, Theorem \ref{main3}(\ref{main3-2}) and Theorem \ref{main3}(\ref{main3-4}) imply the following corollary.

\begin{corollar}\label{main4}
\textit{
Let $G$ be a finitely generated group discriminated by a locally quasiconvex torsion-free hyperbolic group.}
\begin{enumerate}[{(i)}]
\item\label{main4-1}\textit{There is a complete algorithm which, for a subgroup $H$ of $G$ given by a finite generating set, decides whether or not $H$ is stable.}
\item\label{main4-2}\textit{There is a complete algorithm which, for a subgroup $H$ of $G$ given by a finite generating set, decides whether or not $H$ is Morse.}
\end{enumerate}

\end{corollar}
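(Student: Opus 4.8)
The plan is to obtain Corollary \ref{main4} as an immediate consequence of Theorem \ref{main3}, once the standing hypotheses of that theorem have been verified for the class of groups in question. The first step is to invoke Lemma 4.6 below, which asserts two things about a finitely generated group $G$ discriminated by a locally quasiconvex torsion-free hyperbolic group: that $G$ is itself a toral relatively hyperbolic group, hyperbolic relative to some finite family $\mathbb{P}$ of non-cyclic finitely generated free abelian subgroups; and that every finitely generated subgroup of $G$ is undistorted in $G$. Granting this, the input subgroup $H=\langle S\rangle$ is finitely generated by hypothesis, hence automatically undistorted, so that no distortion condition has to be tested algorithmically and the word ``undistorted'' can simply be dropped from the hypotheses of the relevant parts of Theorem \ref{main3}.

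For part (\ref{main4-1}), I would then apply Theorem \ref{main3}(\ref{main3-2}) to the pair $(G,\mathbb{P})$: it supplies a complete algorithm that, given an undistorted subgroup of $G$, decides whether it is stable. Feeding the finite generating set $S$ directly into this algorithm produces a complete algorithm deciding stability of $\langle S\rangle$ in $G$, since by Lemma 4.6 the subgroup $\langle S\rangle$ automatically meets the input requirement. Part (\ref{main4-2}) is proved in exactly the same way, with Theorem \ref{main3}(\ref{main3-4}) in place of Theorem \ref{main3}(\ref{main3-2}), deciding Morseness instead of stability. For both algorithms to be genuinely effective one also needs the proof of Lemma 4.6 to produce, algorithmically from a finite generating set of $G$, an explicit finite presentation of $G$ together with the finite peripheral structure $\mathbb{P}$; I would make sure this effectiveness is recorded there, so that the algorithms of Theorem \ref{main3} can actually be initialized.

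Consequently the entire burden of the proof falls on Lemma 4.6, and that is where the main obstacle lies. The structural half --- that such a $G$ is toral relatively hyperbolic --- is expected to follow from the theory of limit groups over torsion-free hyperbolic groups: one realizes $G$ inside an iterated tower of extensions of centralizers over the coefficient group and checks, via a relatively hyperbolic combination theorem, that toral relative hyperbolicity is preserved at each stage, the peripheral subgroups being exactly the maximal non-cyclic abelian subgroups. The undistortedness half is where local quasiconvexity of the coefficient group enters: it should guarantee that finitely generated subgroups remain relatively quasiconvex, hence undistorted, throughout the hierarchy. Turning these structural statements into effective ones --- so that $\mathbb{P}$ and a presentation of $G$ are computable from the given data --- is the last and most delicate point to pin down, and once it is in hand the corollary is formal.
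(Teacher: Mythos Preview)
Your proposal is correct and follows essentially the same approach as the paper: reduce to Theorem \ref{main3}(\ref{main3-2}) and (\ref{main3-4}) by invoking the structural lemma (Lemma \ref{lemma1} and Corollary \ref{lemma2} in the paper, your ``Lemma 4.6'') that $G$ is toral relatively hyperbolic and every finitely generated subgroup of $G$ is undistorted. Your sketch of how that lemma is proved --- embedding $G$ in an iterated extension of centralizers and using combination/local relative quasiconvexity results --- matches the paper's Section \ref{6}; your additional remark about the effectiveness of computing $\mathbb{P}$ and a presentation of $G$ is a legitimate point the paper leaves implicit.
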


Note that since an ordinary limit group $G$ is a finitely generated group discriminated by the free group $F_2$, and $F_2$ is locally quasiconvex torsion-free hyperbolic, there exist such algorithms for $G$ as in Corollary \ref{main4}.
\bigskip

The above results raise several interesting questions.
In Theorem \ref{main1}, if we can detect distortion of $H$ in Mod($S$) we have a complete algorithm detecting stability by the equivalence (3) and (4) in Theorem \ref{convex}. Hence, it is natural to ask the following question.

\begin{question}
 Is there a patrial algorithm detecting that a finitely generated subgroup is undistorted in Mod($S$)?
\end{question}

Let $G$ be either a mapping class group as in Theorem \ref{main1} or a right-angled Artin group as in Theorem \ref{main2}. Then we do not have a partial detection algorithm for non-Morseness of a given undistorted subgroup $H$ of $G$. One way to detect non-Moreseness is checking non-stability and infinite index. Since we already have a partial algorithm for detecting non-stability, the question is as follows.

\begin{question}
Let $G$ be a mapping class group or a right-angled Artin group. Is there a patrial algorithm which, for an undistorted subgroup $H$, decides whether or not $H$ has infinite index in $G$?
\end{question}

The paper is organized as follows. 
We prove Theorem \ref{main1} in Section \ref{2} and Theorem \ref{main2} in Section \ref{3}.
In Section \ref{4}, we prove Theorem \ref{main3} and Corollary \ref{main5}.  Finally, we prove  Corollary \ref{main4} in Section \ref{6}.
\\

\textbf{Acknowledgement. }The author is very grateful to her  PhD advisor Ilya Kapovich for his guidance, encouragement, and constant support. Ilya Kapovich and the author would like to thank to Samuel Taylor for suggesting an approach to one of the proofs of Theorem \ref{main2}(\ref{main2-1}). 
We would also like to thank Daniel Groves, Chris Hruska and Olga Kharlampovich for useful discussions regarding relatively quasiconvex subgroups of relatively hyperbolic groups. We are particularly grateful to Daniel Groves for explaining to us the results of \cite{GM17}.
The author gratefully acknowledges support from the NSF grants DMS-1710868 and DMS-1905641.

\section{Mapping class groups}\label{2}

Let $S$ be an oriented, connected, finite type surface with Euler characteristic $\chi(S)<0$. Note that a surface
$S$ is of finite type if and only if the fundamental group of $S$ is finitely generated. The (extended) mapping class group Mod($S$) of $S$ is the group of isotopy classes of homeomorphisms of $S$. Convex cocompact subgroups of Mod($S$) provide an important class of subgroups of Mod($S$), see \cite{FM02, KL08}.

\begin{definition}\label{definitionconvex}
Let $S$ be an oriented, connected, finite type surface with $\chi(S)<0$. Then a subgroup $H<$ Mod$(S)$ is \textit{convex cocompact} if for some $x$ in Teichm$\ddot{\textup{u}}$ller space $\mathcal{T}(S)$ the orbit $H \cdot x$ is quasiconvex with respect to the Teichm$\ddot{\textup{u}}$ller metric on $\mathcal{T}(S)$.
\end{definition}

We have the following various characterizations of convex cocompactness of Mod($S$).

\begin{theorem}\label{convex}
Let $S$ be an oriented, connected, finite type surface with $\chi(S)<0$ which is neither the 1-punctured torus nor the 4-punctured sphere.
Let \textup{Mod$(S)$} be the mapping class group of $S$, and let $H$ be a finitely generated subgroup of \textup{Mod$(S)$}. Then the following are equivalent:
\begin{enumerate}
\item The subgroup $H$ is convex cocompact.
\item An orbit map of $H$ into the curve complex $\mathcal{C}(S)$ is a quasi-isometric embedding.
\item The subgroup $H$ is finitely generated, undistorted, and purely pseudo-Anosov.
\item The subgroup $H$ is stable.
\item The subgroup $H$ is Morse of infinite index in \textup{Mod$(S)$}.
 \end{enumerate}
\end{theorem}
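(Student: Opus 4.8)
The strategy is to obtain the five-way equivalence by assembling results already in the literature, using condition $(4)$ (stability) as a hub, together with one elementary remark about $\mathrm{Mod}(S)$. First I would record $(1)\Leftrightarrow(2)$: for a finitely generated $H<\mathrm{Mod}(S)$, convex cocompactness is equivalent to some (equivalently, any) orbit map $H\to\mathcal{C}(S)$ being a quasi-isometric embedding, by the theorem of Kent--Leininger \cite{KL08} (also due independently to Hamenst\"adt). This is precisely the link between the Teichm\"uller-metric definition of convex cocompactness and the curve-complex condition, and it is here that excluding the $1$-punctured torus and the $4$-punctured sphere matters, so that $\mathcal{C}(S)$ is the usual connected, infinite-diameter, Gromov-hyperbolic curve complex. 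Next, $(1)\Leftrightarrow(4)$ is the main theorem of Durham--Taylor \cite{DT15}, identifying convex cocompactness with stability in $\mathrm{Mod}(S)$.

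For $(3)\Leftrightarrow(4)$ I would argue directly from two facts. A stable subgroup is, by the very definition of stability, finitely generated and undistorted; and, conversely, a finitely generated undistorted subgroup of $\mathrm{Mod}(S)$ is stable if and only if it is purely pseudo-Anosov (equivalently, purely loxodromic for the action on $\mathcal{C}(S)$), by \cite{DT15, BKKL18}. Combining these gives $(3)\Leftrightarrow(4)$ at once; note that the ``undistorted'' hypothesis in $(3)$ is exactly what makes the direction $(3)\Rightarrow(4)$ available, since without it one would be asserting the open Farb--Mosher conjecture. (That convex cocompact subgroups are purely pseudo-Anosov was of course already established by Farb--Mosher \cite{FM02}.)

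It remains to handle $(4)\Leftrightarrow(5)$. For $(4)\Rightarrow(5)$: a stable subgroup is Morse (an implication valid in every finitely generated group) and is word-hyperbolic; since for the surfaces under consideration $\mathrm{Mod}(S)$ is \emph{not} word-hyperbolic---it contains a rank-two free abelian subgroup generated by Dehn twists about two disjoint, non-isotopic simple closed curves---a finite-index subgroup of $\mathrm{Mod}(S)$ cannot be word-hyperbolic, and therefore $H$ must have infinite index. For $(5)\Rightarrow(4)$: by \cite{K19} a Morse subgroup of $\mathrm{Mod}(S)$ is either stable or of finite index, and having infinite index rules out the latter, so $H$ is stable.

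Putting the three parts together yields the equivalence of $(1)$--$(5)$. In this form the theorem is largely a synthesis of quoted results; the only link in the chain that is not a direct citation is the elementary remark that $\mathrm{Mod}(S)$ fails to be word-hyperbolic for these $S$. The genuine mathematical weight is hidden in the inputs---principally the equivalence of stability with ``undistorted plus purely pseudo-Anosov'' from \cite{DT15, BKKL18} underlying $(3)\Rightarrow(4)$, and, from the point of view of this paper, the classification of Morse subgroups of mapping class groups from \cite{K19} underlying $(5)\Rightarrow(4)$. I expect the main care in writing the argument to lie in matching up, across these sources, the clauses asserting independence of the chosen finite generating set and of the basepoint in $\mathcal{T}(S)$ or $\mathcal{C}(S)$.
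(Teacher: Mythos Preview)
Your proposal is correct and matches the paper's treatment, which is simply a paragraph of citations: $(1)\Leftrightarrow(2)$ via \cite{KL08,H05}, $(1)\Leftrightarrow(3)$ via \cite{BKKL18}, $(1)\Leftrightarrow(4)$ via \cite{DT15}, and $(4)\Leftrightarrow(5)$ via \cite{K19,RST18}. The only minor difference is that for $(4)\Rightarrow(5)$ you supply the elementary observation that $\mathrm{Mod}(S)$ contains $\mathbb{Z}^2$ and hence cannot be word-hyperbolic, whereas the paper simply cites \cite{K19,RST18} for both directions; your version is a touch more self-contained but otherwise the arguments coincide.
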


The equivalence between (1) and (2) was shown in \cite{KL08} and Hamenst$\ddot{\textup{a}}$dt in \cite{H05} independently. The equivalence between (1) and (3) was proved in \cite{BKKL18} including the 1-punctured torus $S_{1,1}$ and the 4-punctured sphere $S_{0,4}$. The equivalence between (1) and (4) shown in \cite{DT15} excludes those two surfaces. The equivalence between (4) and (5) shown in \cite{K19, RST18}.

\begin{theorem}\cite{K19}\label{K19} Let $S$ be an oriented, connected, finite type surface with $\chi(S)<0$ which is neither the 1-punctured torus nor the 4-punctured sphere. A finitely generated subgroup $H$ of \textup{Mod$(S)$} if and only if either (i) $H$ is stable in \textup{Mod$(S)$} or (ii) $H$ has finite index in \textup{Mod$(S)$}, and that (i) and (ii) are mutually exclusive.
\end{theorem}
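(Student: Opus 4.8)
The plan is to derive Theorem \ref{K19} from Theorem \ref{convex} together with two elementary facts — that stable subgroups are Morse and word-hyperbolic, and that finite-index subgroups are always Morse — and then to indicate how the one piece of genuine content, the equivalence (4)$\Leftrightarrow$(5) of Theorem \ref{convex}, is proved.

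First the formal part. If $H$ is stable then $H$ is Morse; if $H$ has finite index in $\mathrm{Mod}(S)$ then $H$ is $R$-dense for some $R$, so every quasigeodesic lies in the $R$-neighborhood of $H$ and $H$ is Morse. This gives the implication ``(i) or (ii) $\Rightarrow$ Morse''. Conversely, suppose $H$ is Morse. If $H$ has finite index we are in case (ii); otherwise $H$ has infinite index, and the implication (5)$\Rightarrow$(4) of Theorem \ref{convex} — a Morse subgroup of infinite index is stable — puts us in case (i). For the mutual exclusivity, a stable subgroup is word-hyperbolic, while a finite-index subgroup is quasi-isometric to $\mathrm{Mod}(S)$, and $\mathrm{Mod}(S)$ is not word-hyperbolic, as it contains a $\mathbb Z^2$ generated by Dehn twists about two disjoint non-isotopic essential simple closed curves; this is exactly where the hypothesis that $S$ is neither $S_{1,1}$ nor $S_{0,4}$ is used, since for those surfaces $\mathrm{Mod}(S)$ is virtually free, a finite-index subgroup is then quasiconvex and hence stable, and the dichotomy collapses.

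It remains to sketch the substantive implication (5)$\Rightarrow$(4): a finitely generated, Morse, infinite-index $H\le\mathrm{Mod}(S)$ is convex cocompact. By characterization (3) of Theorem \ref{convex} it suffices to prove $H$ is undistorted and purely pseudo-Anosov. Undistortion is automatic, since geodesics of $\mathrm{Mod}(S)$ between elements of $H$ are $(1,0)$-quasigeodesics and hence uniformly close to $H$, making $H$ quasiconvex. For purely pseudo-Anosov I would argue by contradiction. First reduce to the torsion-free case: $\mathrm{Mod}(S)$ has a finite-index torsion-free subgroup $\Gamma_0$, and $H\cap\Gamma_0$ is again finitely generated, Morse and of infinite index, while convex cocompactness of $H\cap\Gamma_0$ forces that of $H$ (its orbit in $\mathcal C(S)$ lies at finite Hausdorff distance from the quasi-isometrically embedded orbit of $H\cap\Gamma_0$); so assume $H$ torsion-free. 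If $H$ is not purely pseudo-Anosov it contains an infinite-order reducible $g$, a power $g^n$ of which fixes an essential simple closed curve $\alpha$; after replacing $\alpha$ by a disjoint curve if necessary, $\langle g^n\rangle$ becomes a coordinate subgroup of an undistorted $\mathbb Z^2$ in $\mathrm{Mod}(S)$, in which, for every $m$, there is a uniform quasigeodesic from $\mathrm{id}$ to $g^{nm}\in H$ that turns a corner through $T_\alpha^m$ and so escapes every bounded neighborhood of $\langle g^n\rangle$.

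The main obstacle is to upgrade ``escapes every neighborhood of $\langle g^n\rangle$'' to ``escapes every neighborhood of $H$'', which is what contradicts Morseness; this requires controlling the position of all of $H$, not just of the cyclic piece. The natural device is the orbit map $H\to\mathcal C(S)$ together with the Masur--Minsky distance formula: by the equivalence (1)$\Leftrightarrow$(2) of Theorem \ref{convex}, a Morse $H$ that is not convex cocompact has an orbit in $\mathcal C(S)$ that is not a quasi-isometric embedding, the distance formula localizes this discrepancy in large subsurface projections, and twisting supported in such a subsurface produces the Morse-violating quasigeodesics while the bounded diameter of the projection of $H$ to that subsurface keeps the detour off $H$. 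Carrying this out uniformly over all quasigeodesic constants, with the Morse gauge of $H$ governing the bounds, and dealing with the finite-order and non-abelian reducible configurations, is the technical heart, and is the step where one invokes the hierarchically hyperbolic structure of $\mathrm{Mod}(S)$ as in \cite{K19, RST18}; that is the route I would follow.
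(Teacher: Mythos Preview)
The paper does not prove this theorem at all: Theorem~\ref{K19} is stated with the citation \cite{K19} and used as a black box, and the surrounding text after Theorem~\ref{convex} likewise attributes the equivalence (4)$\Leftrightarrow$(5) to \cite{K19, RST18}. So there is no proof in the paper to compare your attempt against.

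That said, a few remarks on what you wrote. Your ``formal part'' is correct but partly circular relative to the paper's logical structure: you invoke the implication (5)$\Rightarrow$(4) of Theorem~\ref{convex} to deduce Theorem~\ref{K19}, but the paper credits exactly that implication to \cite{K19}, i.e., to the very result you are trying to prove. The mutual-exclusivity argument (stable $\Rightarrow$ hyperbolic, finite index $\Rightarrow$ quasi-isometric to $\mathrm{Mod}(S)$, and $\mathrm{Mod}(S)$ contains an undistorted $\mathbb{Z}^2$) is fine and is indeed where the exclusion of $S_{1,1}$ and $S_{0,4}$ is used.

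Your sketch of the substantive direction (Morse of infinite index $\Rightarrow$ convex cocompact) correctly isolates the difficulty: producing, from a reducible element of $H$, quasigeodesics that escape every neighborhood of $H$ and not merely of the cyclic subgroup. You are honest that this step is not carried out and that one ultimately appeals to the hierarchically hyperbolic machinery of \cite{K19, RST18}; but that means your argument, as written, bottoms out in the same citation the paper uses. If you want a self-contained proof, the missing ingredient is exactly the control you describe---bounding the subsurface projection of $H$ to the support of the reducible element---and that is genuinely the content of \cite{K19}.
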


\subsection{Curve graph}\label{curve complex}

Recall that a geodesic metric space $X$ is $\delta$-\textit{hyperbolic} where $\delta \geq 0$, if for any geodesic triangle $T$ in $X$ each side of $T$ is contained in the $\delta$-neighborhood of the union of two other sides. 
We also recall the following well-known fact about $\delta$-hyperbolic spaces.

\begin{lemma}[Local to global principle]\label{lem}
Let $X$ be a $\delta$-hyperbolic geodesic space.
For all integers $\lambda\geq 1, $ and $\epsilon\geq 0$ there exists $K, \lambda',$ and $ \epsilon '$ such that, if every length $K$ segment of $\gamma$ is a $(\lambda,\epsilon)$-quasigeodesic, then $\gamma$ is a ($\lambda',\epsilon')$-quasigeodesic. Moreover, the constants $K, \lambda',$ and $ \epsilon'$ can be computed algorithmically from $\delta,\lambda,$ and $\epsilon$.
\end{lemma}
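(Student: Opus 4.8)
The plan is to reduce the statement to two classical, effective facts about $\delta$-hyperbolic geodesic spaces. The first is the \emph{stability of quasigeodesics} (the Morse Lemma): there is a constant $R=R(\delta,\lambda,\epsilon)$, computable from $\delta,\lambda,\epsilon$, so that for every $(\lambda,\epsilon)$-quasigeodesic $q$ in $X$ and every geodesic $[x,y]$ joining its endpoints, $q$ lies in the $R$-neighbourhood of $[x,y]$ and $[x,y]$ lies in the $R$-neighbourhood of $q$. The second is a \emph{broken-geodesic lemma}: if $p_0,\dots,p_n\in X$ satisfy $d(p_{i-1},p_i)\ge D$ for all $i$ and the Gromov product $(p_{i-1}\cdot p_{i+1})_{p_i}:=\tfrac12\bigl(d(p_i,p_{i-1})+d(p_i,p_{i+1})-d(p_{i-1},p_{i+1})\bigr)\le R$ for all interior $i$, and if $D$ exceeds an explicit function of $\delta$ and $R$, then the concatenation $P=[p_0,p_1]\cup\dots\cup[p_{n-1},p_n]$ is a $(\lambda_0,\epsilon_0)$-quasigeodesic with $\lambda_0,\epsilon_0$ computable from $\delta,D,R$; the standard proof is an induction on $n$ showing that every geodesic $[p_0,p_n]$ passes within a bounded distance of each $p_i$. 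Both facts come with effective proofs.

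Given these, I would first fix $K$ to be a concrete multiple of $(\lambda+1)(\epsilon+R+\delta+1)$, chosen so that $D:=\tfrac{K}{2\lambda}-\epsilon$ exceeds both $4R$ and the threshold required by the broken-geodesic lemma; since $R$ does not depend on $K$, this is a legitimate explicit choice. Assume $\gamma$ is a rectifiable path, parametrized by arclength, every subpath of length at most $K$ of which is a $(\lambda,\epsilon)$-quasigeodesic (any shorter subpath sits inside one of length $K$, hence inherits the quasigeodesic constants; and a $\gamma$ of total length $\le K$ is itself a $(\lambda,\epsilon)$-quasigeodesic, so there is nothing to prove in that case). Otherwise choose points $p_0,p_1,\dots,p_n$ on $\gamma$, with $p_0,p_n$ its endpoints and consecutive $p_i$ at arclength exactly $K/2$ apart except possibly for the last gap, which has length $\le K/2$.

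Then I would record three observations. First, the subpath of $\gamma$ from $p_{i-1}$ to $p_i$ has length $\le K/2$, hence is a $(\lambda,\epsilon)$-quasigeodesic, so $D\le d(p_{i-1},p_i)\le K/2$. Second, for interior $i$ the subpath $\sigma_i$ of $\gamma$ from $p_{i-1}$ to $p_{i+1}$ has length $\le K$, hence is a $(\lambda,\epsilon)$-quasigeodesic through $p_i$; by the Morse Lemma $p_i$ lies within $R$ of $[p_{i-1},p_{i+1}]$, and a one-line triangle-inequality computation then yields $(p_{i-1}\cdot p_{i+1})_{p_i}\le R$. Third, by the choice of $K$ the broken-geodesic lemma applies to $p_0,\dots,p_n$, so $P:=[p_0,p_1]\cup\dots\cup[p_{n-1},p_n]$ is a $(\lambda_0,\epsilon_0)$-quasigeodesic. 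Finally, applying the Morse Lemma (both directions) to each subpath $\gamma|_{[t_{i-1},t_i]}$, which fellow-travels $[p_{i-1},p_i]\subseteq P$ within $R$, shows that $\gamma$ and $P$ have Hausdorff distance $\le R$; combining the linear lower bound coming from $P$ being a quasigeodesic with this proximity gives $d(\gamma(s),\gamma(t))\ge a\,|s-t|-b$ for computable $a>0$, $b\ge 0$, while $d(\gamma(s),\gamma(t))\le |s-t|$ is immediate from the arclength parametrization. Hence $\gamma$ is a $(\lambda',\epsilon')$-quasigeodesic, and $K,\lambda',\epsilon'$ have all been computed explicitly from $\delta,\lambda,\epsilon$.

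The main obstacle is not conceptual but a matter of getting the effective bookkeeping right: one must pin down the effective forms of the Morse Lemma and the broken-geodesic lemma and then choose $K$ compatibly, respecting the order of quantifiers (the lemma's threshold depends on $\delta$ and $R$, which depend on $\delta,\lambda,\epsilon$ but not on $K$, so the apparent circularity is harmless). A secondary technical point deserving care is the insensitivity of the conclusion to the parametrization of $\gamma$, so that "$(\lambda',\epsilon')$-quasigeodesic" is meant in a form robust under the mild reparametrizations implicit in the statement.
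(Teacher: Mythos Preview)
The paper does not supply its own proof of this lemma; it simply refers the reader to Chapter~III of Bridson--Haefliger \cite{BH99}. Your outline is correct and is exactly the standard argument given there (see III.H, Theorem~1.13): subdivide $\gamma$ into pieces of length $\approx K/2$, use the Morse Lemma on overlapping length-$K$ subpaths to bound the Gromov products at the breakpoints, and then invoke the effective broken-geodesic (local-geodesic) lemma to conclude.
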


For the proof of Lemma \ref{lem}, for example, see Chapter III in \cite{BH99}.
Let $S$ be a surface as in Definition \ref{definitionconvex}.
The \textit{curve complex} is a natural simplicial complex associated to $S$, and Mod($S$) acts on the curve complex by simplicial automorphisms. We utilize \textit{curve graph} $ \mathcal{C}(S)$, one-skeleton of the curve complex, whose vertices are isotopy classes of essential simple closed curves on $S$, and two distinct isotopy classes are joined by an edge if they are disjointly realizable.
It is known that the curve graph $\mathcal{C}(S)$ is a $\delta$-hyperbolic space for some $\delta$ which can be chosen independent of $S$, see \cite{B06, H07,A13, B12, CRS14, HPW15}. 
Leasure \cite{L02} found an algorithm to compute the distance between two vertices of $\mathcal{C}(S)$, and after then other algorithms have been produced by Shackleton \cite{S12}, Webb \cite{W15}, Watanabe \cite{W17}, and Birman, Margalit, and Menasco \cite{BMM16}.  Bowditch \cite{B06} provided an algorithm to compute geodesics in $\mathcal{C}(S)$. The first algorithm for Theorem \ref{main1}(\ref{main1-1}) is as follows.

\begin{proof}[First proof of Theorem \ref{main1}(\ref{main1-1})]

Pick a base vertex $x$ in the curve graph $\mathcal{C}(S)$. 
Suppose that $H$ is a subgroup of Mod($S$) generated by a finite set $A$ in Mod($S$) such that $A=A^{-1}$.
For each $a\in A$, by using Bowditch's algorithm, compute a geodesic starting from $x$ and terminating at $a\cdot x$ in the cure graph $\mathcal{C}(S)$ and denote it as $u_a$.
If $a\cdot x=x$, we choose $u_a$ to be a closed edge-path of length $2$ from $x$ to $x$ in $\mathcal{C}(S)$.  
 Let $f:$ Mod($S$)$ \to \mathcal{C}(S)$ be an orbit map sending $\phi$ to $\phi \cdot x$ for every $\phi \in\, $Mod($S$). We can extend the orbit map $f$ to the Cayley graph $\Gamma_{\textup{Mod($S$)}}$ of Mod($S$) by sending an edge $(\phi, \phi a)$ to the path $\phi \cdot u_a$ from $\phi \cdot x$ to $(\phi a)\cdot x$, where $a\in A$. 
Note that, by construction, $f$ sends an edge path of length $n$ in $\Gamma_{\textup{Mod}(S)}$ to an edge-path of length at least $n$ in $\mathcal{C}(S)$. 
By Theorem \ref{convex}, the subgroup $H$ is stable in Mod($S$) if and only if there exist integers $\lambda\geq 1, \epsilon \geq 0$ such that for every geodesic $w$ in $\Gamma_{\textup{Mod($S$)}}$ the image $f(w)$ is a ($\lambda$,$ \epsilon$)-quasigeodesic in $\mathcal{C}(S)$.
This is also equivalent to the existence of $\lambda$ and $\epsilon$ such that for every geodesic $w$ starting from $1$ in $\Gamma_{\textup{Mod($S$)}}$ the image $f(w)$ is a ($\lambda$, $ \epsilon$)-quasigeodesic in $\mathcal{C}(S)$.
The algorithm for detecting stability proceeds as follows. 

Start enumerating all pairs of integers $\lambda\geq 1, \epsilon \geq 0$.
For every such pair $\lambda, \epsilon$, compute the constants $K$, $\lambda' \geq 1$ and $\epsilon'\geq 0$ from Lemma \ref{lem}. Using the solution of the word problem in Mod($S$), list all the geodesic edge-paths $w$ in $\Gamma_{\textup{Mod($S$)}}$ of length at most $K$. 
For every such $w$, compute a path $f(w)$ and check whether or not $f(w)$ is a ($\lambda', \epsilon')$-quasigeodesic in $\mathcal{C}(S)$.
 If the answer is `yes' for all such $w$, we terminate the algorithm. Otherwise we proceed to the next pair of $\lambda$ and $\epsilon$. 
 
 This algorithm terminates if and only if $H$ is stable. \end{proof}

\begin{proof}[Proof of Theorem \ref{main1}(\ref{main1-2})]
Recall that an element $\phi\in Mod(S)$ acts loxodromically on $\mathcal{C}(S)$  if and only if $\phi$ is pseudo-Anosov. Recall also that there exists an algorithm that, given $\phi\in Mod(S)$, decides whether or not $\phi$ is pseudo-Anosov \cite{BH95}.

For a given undistorted finitely generated subgroup $H$ of Mod($S$), we run the partial algorithm in Theorem \ref{main1}(\ref{main1-1}) and, in parallel, we start enumerating all elements of $H$ and look for a non-loxodormic element of $H$.
By Theorem \ref{convex}, either $H$ is stable or else $H$ contains a non-loxodromic element. If the algorithm in Theorem \ref{main1}(\ref{main1-1}) terminates on $H$, we declare that $H$ is stable in Mod($S$). If we find a non-loxodromic element in $H$, we declare that $H$ is not stable in Mod($S$).  

This procedure produces a complete algorithm for deciding whether or not $H$ is stable in Mod($S$).\end{proof}

\begin{proof}[Proof of Theorem \ref{main1}(\ref{main1-3})]
 Let $H$ be a finitely generated subgroup of Mod($S$) given by a finite generating set. By Theorem \ref{K19}, the subgroup $H$ is Morse in Mod($S$) if and only if either $H$ is stable in Mod($S$) or $H$ has finite index in Mod($S$). 
 
We now run the partial algorithm for detecting Morseness in Theorem \ref{main1}(\ref{main1-1}), and, in parallel, run the Todd-Coxeter coset enumeration algorithm (see Chapter III, Section 12 in \cite{RP77}) for detecting finiteness of the index of $H$ in Mod($S$). If the algorithm in Theorem \ref{main1}(\ref{main1-1}) terminates with the conclusion that $H$ is stable, we declare that $H$ is Morse in Mod($S$). If $H$ has finite index in Mod($S$), the Todd-Coxeter algorithm eventually terminates and discovers this fact. In that case, we declare that $H$ is Morse in Mod($S$). Otherwise we continue running both the algorithm in Theorem \ref{main1}(\ref{main1-1}) and the Todd-Coxeter algorithm run forever. 

Taken together, this procedure provides a partial algorithm for detecting Morseness of H in Mod($S$), as required.  
\end{proof}

\begin{remark}
At the moment, it is not known if there exists a complete algorithm that, given an undistorted finitely generated subgroup $H$ of Mod($S$), decides whether or not $H$ has finite index in Mod($S$). If such an algorithm is found, we can promote Theorem \ref{main1}(\ref{main1-3}) to a complete algorithm for deciding whether or not an undistorted subgroup of Mod($S$) is Morse.  
\end{remark}

In Section \ref{short exact sequence}, we provide another algorithm for Theorem \ref{main1}(\ref{main1-1}) when $S$ is a closed hyperbolic surface.

\subsection{Short exact sequence}\label{short exact sequence}
In this section, we assume that $S$ is a closed hyperbolic surface. 
For a finitely generated subgroup $H$ of Mod($S$) the \textit{extension group} $ E_H$ is obtained from the short exact sequence $1 \rightarrow \pi_1(S) \rightarrow E_H \rightarrow H \rightarrow 1$ induced by the Birman exact sequence $1 \rightarrow \pi_1(S) \rightarrow \textup{Mod}(S\setminus \{p\}) \rightarrow \textup{Mod}(S) \rightarrow 1$ for a point $p\in S$.
For a closed hyperbolic surface $S$, it is known that $E_H$ is hyperbolic if and only if $H$ is convex cocompact in Mod($S$) \cite{FM02, H05}. Suppose that we know a finite presentation of $H$. Then we can provide a presentation for $ E_H$ algorithmically as follows.

\begin{lemma}\label{presentation}
Let $H$ be a finitely presented subgroup of \textup{Mod$(S)$} for a closed hyperbolic surface $S$. Then the extension group $E_H$ is finitely presented. Moreover, we can algorithmically find a finite presentation for $E_H$, given a finite generating set $Y\subseteq \textup{Mod(}$S$ \textup {)}$ for $H$ and a finite presentation $H=\langle Y\,|\,Z\rangle$ for $H$. 

\end{lemma}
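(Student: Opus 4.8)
## Proof proposal for Lemma \ref{presentation}

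The plan is to exploit the Birman exact sequence directly and track how presentations behave under group extensions. First I would recall the standard fact that if $1 \to N \to E \to Q \to 1$ is a short exact sequence of groups with $N = \langle X \mid R \rangle$ finitely presented and $Q = \langle Y \mid Z \rangle$ finitely presented, then $E$ is finitely presented, with an explicit presentation: take generators $X \sqcup \widetilde{Y}$, where $\widetilde{Y}$ is a set of lifts to $E$ of the generators $Y$ of $Q$, and relators consisting of (a) the relators $R$ of $N$; (b) for each $z \in Z$, the word $\widetilde{z}$ expressing the lift of the relator $z$ as an element of $N$, written in terms of $X$; and (c) for each $\widetilde{y} \in \widetilde{Y}$ and each $x \in X$, the relation $\widetilde{y} x \widetilde{y}^{-1} = \phi_{\widetilde{y}}(x)$, where $\phi_{\widetilde{y}}(x) \in N$ is the result of the conjugation action written in terms of $X$. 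This is classical (e.g. it appears in Hall's or Johnson's treatment of presentations of extensions); the only issue for us is algorithmic effectiveness.

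Next I would apply this with $N = \pi_1(S)$, $Q = H$, and $E = E_H$. Here $\pi_1(S)$ has the standard genus-$g$ surface presentation $\langle a_1, b_1, \dots, a_g, b_g \mid \prod [a_i, b_i] \rangle$, which is finite and explicit, and $H = \langle Y \mid Z \rangle$ is given. So $E_H$ is finitely presented, which is the first assertion. For the ``moreover'' part I need each of the three families of relators above to be computable. Family (a) is the fixed surface-group presentation — no computation. Family (c) requires: given a lift $\widetilde{y} \in \mathrm{Mod}(S \setminus \{p\})$ of $y \in H \le \mathrm{Mod}(S)$ and a generator $x \in \pi_1(S)$, express the conjugate $\widetilde{y} x \widetilde{y}^{-1}$ as a word in $a_1, b_1, \dots, a_g, b_g$. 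This is exactly the action of the mapping class $y$ on $\pi_1(S)$ (via the identification of $\pi_1(S)$ with the kernel of the Birman sequence), and that action is computable: a mapping class of $S$ is given by a finite amount of combinatorial data (e.g. as a product of Dehn twists, or via its action on a generating set of $\pi_1(S)$ by the Dehn–Nielsen–Baer theorem), and from such data one computes the induced automorphism of $\pi_1(S)$ effectively, then solves the word problem in $\pi_1(S)$ (a surface group, hence automatic) to put the image of each $x$ in normal form. Family (b) is similar: for each relator $z \in Z$, the word $z(Y)$ evaluates to $1$ in $H$, so its chosen lift $z(\widetilde{Y})$ lies in the kernel $\pi_1(S)$; one then needs to write that element of $\mathrm{Mod}(S \setminus \{p\})$ as an explicit word in $a_i, b_i$. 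Again this uses the solution to the word problem together with an effective identification of the kernel of $\mathrm{Mod}(S \setminus \{p\}) \to \mathrm{Mod}(S)$ with $\pi_1(S)$ — this identification is the point-pushing map, which sends a loop $\gamma \in \pi_1(S, p)$ to the point-pushing homeomorphism $\mathrm{Push}(\gamma)$, and this is an effective correspondence.

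The main obstacle — and the step deserving the most care — is producing an \emph{effective} version of the Birman exact sequence: one must fix, once and for all, a computable section (or at least computable lifts of the generators $Y$) from $\mathrm{Mod}(S)$ back up to $\mathrm{Mod}(S \setminus \{p\})$, together with an effective isomorphism between $\ker(\mathrm{Mod}(S\setminus\{p\}) \to \mathrm{Mod}(S))$ and $\pi_1(S)$. Concretely: given $y \in H$ as a word in the standard generators of $\mathrm{Mod}(S)$ (say as a product of Dehn twists about explicit curves), lift each twist to the corresponding twist in $\mathrm{Mod}(S \setminus \{p\})$ about a curve in $S \setminus \{p\}$ in the complement of $p$, multiply, and interpret the results inside $\mathrm{Mod}(S \setminus \{p\})$, whose word problem is solvable. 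I would package this as: (1) fix generators and presentations of $\mathrm{Mod}(S)$ and $\mathrm{Mod}(S \setminus \{p\})$ and the map between them; (2) from the generating set $Y$ of $H$, compute lifts $\widetilde Y$; (3) compute the automorphisms $\phi_{\widetilde y}$ of $\pi_1(S)$ and the kernel-words $\widetilde z$ by solving word problems; (4) assemble the presentation from families (a), (b), (c). Everything downstream of step (1) is routine; the content is in checking that step (1) can be carried out effectively, which follows from the finite presentability of $\mathrm{Mod}(S)$ and $\mathrm{Mod}(S \setminus \{p\})$ and the explicit (Dehn-twist-level) description of the Birman homomorphism.
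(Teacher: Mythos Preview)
Your proposal is correct and follows the same classical extension-presentation template as the paper: generators $X \sqcup \widetilde{Y}$ and three relator families (surface relators, conjugation relations $\widetilde{y}^{\pm 1} x \widetilde{y}^{\mp 1} = u_{x,y,\pm}$, and lifted $H$-relators $\widetilde{z} = v_z$). The one substantive difference is in how the words $u_{x,y,\pm}$ and $v_z$ are produced algorithmically. You propose to compute them directly---lift each generator $y$ to $\mathrm{Mod}(S\setminus\{p\})$ by lifting Dehn twists, compute the induced automorphism of $\pi_1(S)$ explicitly, and invert the point-pushing map to read off $v_z$. The paper instead uses a blind search: having fixed lifts $y'$, it simply enumerates all words in $F(X)$ and, using the solvable word problem in the ambient mapping class group, checks each against $(y')^{\pm 1} x (y')^{\mp 1}$ (respectively $z'$) until it finds a match. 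Your approach is more constructive and efficient but demands an effective inverse to the point-pushing map, which you do not fully justify; the paper's brute-force search sidesteps this at the cost of efficiency, needing only that the word problem is solvable in $\mathrm{Mod}(S\setminus\{p\})$.
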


\begin{proof}
Let $\pi_1(S)=\langle X \,|\, R\rangle$ be a finite presentation of $\pi_1(S)$ and let $H=\langle Y\,|\,Z\rangle$ be a finitely presented subgroup of Mod($S$). By the exactness of the sequence $1 \rightarrow \pi_1(S) \rightarrow E_H \rightarrow H \rightarrow 1$, the fundamental group $\pi_1(S)$ of $S$ can be identified with a normal subgroup of $E_H$ and the group homomorphsim $\pi : E_H\rightarrow H$ is surjective.
Pick a lift $\phi: Y\to E_H$ of the quotient map $\pi: E_H\to H$. For each $y\in Y$ denote $y'=\phi(y)$, and for every $z=y_1^{\epsilon_1}\dots y_n^{\epsilon_n}\in Z$ denote $z'=(y_1')^{\epsilon_1}\dots (y_n')^{\epsilon_n}$.
Let $A := \{ y', x \,|\, y\in Y, x \in X \}$. Since $\pi_1(S)=Ker(\pi)$ is normal in $E_H$, for every $x\in X, y\in Y, \epsilon \in \{\pm 1\}$, there exists a word $u=u_{x,y, \epsilon}\in F(X)$ such that  $(y')^{\epsilon} x(y')^{-\epsilon}=u$ in $E_H$. Moreover, such $u$ can be found algorithmically as follows. Start enumerating all words from $F(X)$ and, using the solution of the word problem in Mod($S$), for each of these words check whether it is equal to $(y')^{\epsilon} x(y')^{-\epsilon}$ in $E_H$. Eventually this process stops and produces a desired word $u_{x,y,\epsilon}$. For every $z\in Z$ there exists some word $v=v_z\in F(X) $ such that $z'=v$ in $E_H$ and we can find such $v$  algorithmically in a similar manner. 
 By this procedure, we can compute the following finite set $U$ algorithmically:
 \centerline{$U:= \{r,\,  (y')^{\epsilon} x(y')^{-\epsilon},\, z'v_z^{-1} \,|\, r\in R,\,  x\in X, y\in Y, \epsilon \in \{\pm 1\}, z\in Z
 \}.$}
 \noindent
Note that the set $U$ is contained in $F(A)$.

\begin{enumerate}[{(i)}]
\item Claim 1 : The group $E_H$ is generated by the set $A$.

\noindent
Let $w$ be an element of $E_H$. Then $\pi(w)\in H$ and $\pi(w)=y_1^{e_1}y_2^{e_2}\dots y_n^{e_n}$ for some $y_i\in Y$ and $e_i\in \mathbb{Z}$. For each $i$, we have $\phi (y_i)=y'_i\in E_H$ and $\pi (w)=\pi (y'_1)^{e_1}\pi(y_2')^{e_2}\dots \pi(y_n')^{e_n}=\pi((y_1')^{e_1}(y_2')^{e_2}\dots (y_n')^{e_n})$. Then we have $w((y_1')^{e_1}(y_2')^{e_2}\dots (y_n')^{e_n})^{-1} \in$ ker($\pi)=\pi_1(S)$. Therefore, $w((y_1')^{e_1}(y_2')^{e_2}\dots (y_n')^{e_n})^{-1}=_{E_H} x_1^{f_1}x_2^{f_2}\dots x_m^{f_m}$ for some $x_j\in X$ and $f_i\in\mathbb{Z}$.
Hence, $w=_{E_H} x_1^{f_1}x_2^{f_2}\dots x_m^{f_m}(y_1')^{e_1}(y_2')^{e_2}\dots (y_n')^{e_n}$ and therefore Claim 1 holds since $w$ is arbitrary.

\item Claim 2 :  The set $U$ is a set of defining relations for $E_H$ on $A$, that is, $E_H$ has the presentation $E_H=\langle A\,|\,U\rangle$.

\noindent
By construction, every element of $U$ is a relation in $E_H$. Suppose now $w\in F(A)$ is such that $w=1$ in $E_H$. 
Using the conjugation relations from $U$ and pushing the letters from $Y$ to the right, we can rewrite $w$ in the form $w=_{E_H} u (y_1')^{e_1}\dots (y_n')^{e_n}$ for some word $u\in F(X)\, , y_i\in Y,$ and $e_i\in\mathbb{Z}$. Since $\pi(w)=1$ in $H$, we have $y_1^{e_1}\dots y_n^{e_n}=1$ in $H$.  Then we can reduce the word $y_1^{e_1}\dots y_n^{e_n}$ to the empty words in $H$ using the relations from $Z$. Each application of a relation $z$ from $Z$ consists of replacing a subword of this relation by its complementary portion in $z$. Starting from the word $(y_1')^{e_1}\dots (y_n')^{e_n}$, for each such move we perform the corresponding move in $F(A)$ and replace a subword of $z'$ by the complementary portion of $z'v_z^{-1}$ there, and then pushing all newly created letters from $X$ to the left using the conjugation relations from $U$. Iterating this process, using the relations from $U$ we can rewrite $w$ as $w=_{E_H} u v$ where $ v\in F(X)$. Then $uv=_{\pi_1(S)} 1$, and we can rewrite $uv$ to the empty word  using the relations from $R$. Therefore, $w\in  \langle\langle U\rangle\rangle_{F(A)}$, as claimed.
\end{enumerate}

It follows that $\langle A\,|\,U\rangle $ is indeed a finite presentation for $E_H$. \end{proof}

For a closed surface $S$, possibly with a finite set of punctures, Mosher \cite{M95} showed that the mapping class group Mod($S$) of $S$ is automatic.
For an automatic group $G$ with an automatic structure $(L,A)$ where $L$ is a regular language in the free monoid $A^*$ on a finite set $A$ of semigroup generators for $G$, a subgroup $H$ of $G$ is called  \textit{L-rational} if its full preimage in $L$, $L\cap \pi^{-1}(H)$ where $\pi : A^* \to G$ is the natural monoid homomophism, is a regular language. See \cite{ECHMPT92} for further details.
Gersten and Short \cite{GS91} proved that a rational subgroup $H$ of an automatic group $G$ is finitely presented.
The algorithm given by Kapovich \cite{K96}, applied to such $H$, produces both a rationality constant for $H$ in $G$ and a finite presentation for $H$. 
Gersten and Short \cite{GS91} show that the subgroup $H$ is $L$-rational if and only if $H$ is $L$-quasiconvex in $G$. Since Morseness implies quasiconveixty, a Morse subgroup $H$ of an automatic group $G$ is rational, and therefore the algorithm in \cite{K96} applied to $H$ eventually terminates.

\begin{proof}[Second proof of Theorem \ref{main1}(\ref{main1-1}) when $S$ is a closed hyperbolic surface]
For a closed hyperbolic surface $S$, let $\langle X | R\rangle$ be the standard presentation for $\pi_1(S)$. Suppose that $H$ is a finitely generated subgroup of Mod($S$).
We run the \cite{K96} procedure  for detecting rationality of $H$ in Mod($S$). If it terminates, take a presentation for $H$ obtained from this procedure, compute a presentation for $E_H $ by using Lemma \ref{presentation} and then run Papasoglu's algorithm \cite{P95} for detecting hyperbolicity on $E_H$. If papasoglu's algorithm discovers that $E_H$ is hyperbolic, terminate the entire algorithm and declare that $H$ is stable in Mod($S$). 

It is known that $E_H$ is hyperbolic if and only if $H$ is convex cocompact \cite{FM02, H05}. Hence, by Theorem \ref{convex}, the above algorithm terminates if and only if $H$ is stable in Mod($S$). \end{proof}


\section{Right-angled Artin groups}\label{3}

For the material in this section, see \cite{C07, BC12} as background references. 

\begin{definition}[Right-angled Artin groups]
Let $\Gamma$ be a finite simplicial graph with vertex set $V(\Gamma)$ and edge set $E(\Gamma)\subset V\times V$. The \textit{right-angled Artin group} on $\Gamma$ has the finite group presentation:
\[
A_\Gamma:=\langle\,\,V(\Gamma)\,\, |\, \,v_iv_j=v_jv_i \text{ whenever } (v_i, v_j)\in E(\Gamma)\,\,\rangle .
\]
\end{definition}

For an induced subgraph $\Lambda$ of $\Gamma$ it is known that the subgroup of $A_\Gamma$ generated by $V(\Lambda)$ is the right-angled Artin group on $\Lambda$, and we denote this subgroup $A_\Lambda\le A_\Gamma$.

\begin{definition}
For two graphs $\Gamma_1$ and $\Gamma_2$, the \textit{join} of $\Gamma_1$ and $\Gamma_2$ is a graph obtained by connecting every vertex of $\Gamma_1$ to every vertex of $\Gamma_2$ by an edge.
If $|V(\Gamma_1)|=1$, $|V(\Gamma_2)|=1$ then the join of $\Gamma_1$ and $\Gamma_2$ is called a \textit{trivial join}.
A graph $\Gamma$ is \textit{anti-connected} if $\Gamma$ does not decompose as a nontrivial join.
For an induced subgraph $\Lambda$ of $\Gamma$ which decomposes as a nontrivial join, the subgroup $A_\Lambda$ is called a \textit{join subgroup} of $A_\Gamma$.
\end{definition}

\begin{definition}
Let $\Gamma$ be a finite simplicial, connected, and anti-connected graph.
An element $g$ of $A_\Gamma$ is called \textit{loxodromic} if the element $g$ is  not conjugate into a join subgroup of $A_\Gamma$ and  \textit{elliptic} otherwise. 
\end{definition}

\begin{definition}

Let $\Gamma$ be a finite simplicial, connected, and anti-connected graph.
A word $w$ over the alphabet $V(\Gamma)^{\pm 1}$ is said to be in a \textit{normal form} for $A_\Gamma$ if $w$ is freely reduced and does not contain any subwords of the form $x^{\epsilon} u x^{-\epsilon}$, where $x\in V(\Gamma), \epsilon=\pm 1$, and where $x$ commutes with every letter from $u$. A word $w$ over the alphabet $V(\Gamma)^{\pm 1}$ is said to be in a \textit{cyclically reduced normal form} for $A_\Gamma$ if $w$ is in a normal form for $A_\Gamma$ and every cyclic permutation of $w$ is in a normal form.
\end{definition}

For a finite connected graph  $\Gamma$, Kim and Koberda \cite{KK13, KK14} introduced the \textit{extension graph} $\Gamma^e$ of $\Gamma$.
The vertex set of $\Gamma^e$ is $\{v^g =g^{-1}vg \,|\, v \in V(\Gamma), \,g\in A_\Gamma\}$ and two distinct vertices $u^g$ and $v^h$ are adjacent if and only if they commute in $A_\Gamma$.
For a finite and connected graph $\Gamma$,  the extension graph $\Gamma^e$ is a quasi-tree and thus a hyperbolic metric space \cite{KK13}. We have the following characterizations of  loxodromic elements in $A_\Gamma$, see \cite{S89, BC12, KK13, KK14}.

\begin{theorem}\label{KMT17-0}
Let $\Gamma$ be a finite simplicial, connected, and anti-connected graph with at least two vertices, and let $g$ be an element of $ A_\Gamma$. Then the following are equivalent:
\begin{enumerate}[{(1)}]
\item The element $g$ is loxodromic.
\item The element $g$ acts as a rank 1 isometry on the universal cover $\widetilde{S_\Gamma}$ of the Salvetti complex $S_\Gamma$.
\item The centralizer $C_{A_\Gamma}(g)$ of $g$ is infinite cyclic.
\item The element $g$ acts as a loxodromic isometry of the extension graph $\Gamma^e$.
\item Some (equivalently, any) cyclically reduced normal form $g$ is not in a join subgroup of $A_\Gamma$.
\end{enumerate}

\end{theorem}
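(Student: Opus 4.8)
The plan is to show that each of (2)--(5) is equivalent to the combinatorial condition (1), namely that $g$ is not conjugate into a join subgroup $A_\Lambda$; the bulk of the argument is an assembly of the structure theory of \cite{S89}, \cite{BC12} and \cite{KK13, KK14}, and the one genuinely hands-on step is the normal-form bookkeeping that identifies (1) with (5).

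First I would dispatch $(1)\Leftrightarrow(5)$. Since conjugation does not affect membership in the conjugates of join subgroups, we may conjugate $g$ to a cyclically reduced normal form $g_0$. It is a standard fact (via the pyramidal/piling normal form, cf.\ Crisp--Godelle--Wiest and Green's thesis) that the \emph{support} $\mathrm{supp}(g_0)$ --- the set of generators occurring in $g_0$ --- depends only on the conjugacy class of $g$, and that $g_0\in A_\Lambda$ for an induced subgraph $\Lambda$ if and only if $\mathrm{supp}(g_0)\subseteq V(\Lambda)$. Thus $g$ is conjugate into some $A_\Lambda$ with $\Lambda$ a nontrivial join precisely when the subgraph spanned by $\mathrm{supp}(g_0)$ is contained in some nontrivial join subgraph of $\Gamma$, which is exactly the negation of (5); the ``some/any'' clause in (5) then falls out of conjugacy-invariance of the support. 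Next, $(5)\Leftrightarrow(3)$ is the Centralizer Theorem of Servatius \cite{S89}: for cyclically reduced $g_0$, that theorem expresses $C_{A_\Gamma}(g_0)$ as a direct product of factors governed by a join decomposition of $\mathrm{supp}(g_0)$ together with the link of $\mathrm{supp}(g_0)$, and in particular $C_{A_\Gamma}(g_0)$ is infinite cyclic exactly when $g_0$ lies in no join subgroup (the proper-power case reducing to its root by standard facts about roots and centralizers in right-angled Artin groups). Since $C_{A_\Gamma}(g)$ is conjugate to $C_{A_\Gamma}(g_0)$, condition (3) is insensitive to conjugation, and $(3)\Leftrightarrow(5)$ follows.

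It remains to bring in the two geometric models. For $(1)\Leftrightarrow(2)$ I would quote the theorem of Behrstock--Charney \cite{BC12}: for $\Gamma$ connected, an element of $A_\Gamma$ acts as a rank one isometry on the CAT(0) cube complex $\widetilde{S_\Gamma}$ if and only if no power of it is conjugate into a join subgroup --- and, since taking powers does not change $\mathrm{supp}$ of a cyclically reduced form, ``$g$ conjugate into a join subgroup'' and ``some power of $g$ conjugate into a join subgroup'' coincide, so this is equivalent to (1). For $(1)\Leftrightarrow(4)$ (equivalently $(2)\Leftrightarrow(4)$) I would use Kim--Koberda's analysis of the action of $A_\Gamma$ on the extension graph $\Gamma^e$ from \cite{KK13, KK14}, in which an element acts loxodromically if and only if it is not conjugate into a join subgroup. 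Chaining these gives the full cycle $(1)\Leftrightarrow(2)\Leftrightarrow(3)\Leftrightarrow(4)\Leftrightarrow(5)$.

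The main obstacle is not any of the geometric equivalences --- (2) and (4) are imported essentially wholesale from \cite{BC12} and \cite{KK13, KK14}, so the only issue there is pinning down the precise statements --- but rather the combinatorial core $(1)\Leftrightarrow(5)$. The delicate points are: making ``the support of the cyclically reduced normal form'' a well-defined conjugacy invariant; correctly matching ``lies in a join subgroup'' with ``the spanned subgraph is contained in a \emph{nontrivial} join'' rather than merely being disconnected or a clique; and the parallel bookkeeping used for (2) that passing to powers does not change this support.
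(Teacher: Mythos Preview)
Your proposal is correct, and there is essentially nothing to compare against: the paper does not prove Theorem~\ref{KMT17-0} at all. It is stated as a background result with the sentence ``We have the following characterizations of loxodromic elements in $A_\Gamma$, see \cite{S89, BC12, KK13, KK14}'', and the proof is left entirely to those references. Your outline is a faithful and accurate account of how those cited results assemble into the full equivalence, so in that sense your approach \emph{is} the paper's approach, only made explicit.

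One small remark: for the normal-form facts underlying $(1)\Leftrightarrow(5)$ you appeal to Crisp--Godelle--Wiest and Green's thesis, neither of which the paper cites; the paper's own reference for normal forms in $A_\Gamma$ is Hermiller--Meier \cite{HM95} (and \cite{DKL12} for algorithmic computation). This is purely a matter of citation choice---the underlying combinatorics (conjugacy-invariance of the support of a cyclically reduced form, and that $g_0\in A_\Lambda$ iff $\mathrm{supp}(g_0)\subseteq V(\Lambda)$) is the same in all of these sources.
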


Note that Theorem \ref{KMT17-0}(5) provides an algorithm that, given a word in the generators of $A_\Gamma$, decides whether or not this word represents a loxodromic element. Koberda, Mangahas, and Taylor \cite{KMT17}  gave a characterization of stable subgroups of a right-angled Artin group as follows.

\begin{theorem}[Theorem 1.1 in \cite{KMT17}]\label{KMT17}
Let $\Gamma$ be a finite simplicial, connected, and anti-connected graph and let $H$ be a finitely generated subgroup of $A_\Gamma$. Then the following are equivalent:
\begin{enumerate}[{(1)}]
\item Some (any) orbit map from $H$ into $\Gamma^e$ is a quasi-isometric embedding.
\item The subgroup $H$ is stable in $A_\Gamma$. 
\item The subgroup $H$ is purely loxodromic, i.e., every nontrivial element of $H$ is loxodromic in $A_\Gamma$.
\end{enumerate}
\end{theorem}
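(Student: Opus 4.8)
The plan is to establish the circle of implications $(2)\Rightarrow(3)$, $(1)\Rightarrow(3)$, $(3)\Rightarrow(1)$ and $(1)\Rightarrow(2)$, which together give the stated equivalences. The first two are routine and I would dispatch them quickly. For $(2)\Rightarrow(3)$: since $A_\Gamma$ is torsion-free, every nontrivial $h\in H$ has infinite order, and if such an $h$ were elliptic --- that is, conjugate into a join subgroup $A_\Lambda$ with $\Lambda=\Lambda_1\ast\Lambda_2$ a nontrivial join --- then, writing the conjugate of $h$ as $(a,b)\in A_{\Lambda_1}\times A_{\Lambda_2}$ and, if one coordinate is trivial, choosing a vertex of the other factor, one checks that $h$ lies in a $\mathbb{Z}^2$ subgroup of $A_\Gamma$; but a stable subgroup is word-hyperbolic and hence $\mathbb{Z}^2$-free, so $H$ must be purely loxodromic. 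For $(1)\Rightarrow(3)$: if the orbit map $H\to\Gamma^e$ is a quasi-isometric embedding, then for each nontrivial $h\in H$ the cyclic orbit $\langle h\rangle\cdot x$ is quasi-isometrically embedded, so $h$ has positive stable translation length and therefore acts loxodromically on $\Gamma^e$; by Theorem \ref{KMT17-0} this is equivalent to $h$ being loxodromic in $A_\Gamma$.

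The heart of the matter is $(3)\Rightarrow(1)$, and the essential input I would rely on is that $\Gamma^e$ is not merely a quasi-tree: the action of $A_\Gamma$ on it is acylindrical and carries a Behrstock-type ``bounded geodesic image'' structure relative to the canonical family of subsets coming from vertex-stars and their translates \cite{KK13,KK14}. Fixing generators $h_1,\dots,h_k$ of the purely loxodromic subgroup $H$ and a basepoint $x\in\Gamma^e$, I would first use acylindricity to get a uniform $\tau>0$ below which the stable translation length of no loxodromic element can fall --- and, by hypothesis, every nontrivial element of $H$ is loxodromic. Then, for a word $w=s_1\cdots s_n$ with $s_j\in\{h_1^{\pm1},\dots,h_k^{\pm1}\}$ representing a geodesic in $H$, I would examine the broken path through $x,\,s_1x,\,s_1s_2x,\,\dots,\,wx$: each jump moves the basepoint by at most $D:=\max_i d_{\Gamma^e}(x,h_ix)$, so it suffices to show this path is a uniform local quasigeodesic and then apply the local-to-global principle (Lemma \ref{lem}) for the $\delta$-hyperbolic space $\Gamma^e$. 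Local quasigeodesicity is exactly where bounded geodesic image enters: it prevents consecutive segments of the broken path from backtracking past a uniform threshold, so that over each window of bounded length the path makes definite progress, while the lower bound $\tau$ makes that progress linear in the number of syllables. The local-to-global principle then promotes the broken path to a genuine $(\lambda',\epsilon')$-quasigeodesic, yielding $d_{\Gamma^e}(x,wx)\succeq n\asymp d_H(1,w)$ and hence a quasi-isometric embedding.

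Finally, for $(1)\Rightarrow(2)$ I would upgrade a quasi-isometrically embedded $\Gamma^e$-orbit to stability inside the non-hyperbolic group $A_\Gamma$. Undistortedness is immediate, since the orbit map $A_\Gamma\to\Gamma^e$ is coarsely Lipschitz and so $d_H(1,h)\asymp d_{\Gamma^e}(x,hx)\preceq d_{A_\Gamma}(1,h)$; moreover $Hx$ is quasi-isometric to a quasiconvex subset of the hyperbolic space $\Gamma^e$, so $H$ is word-hyperbolic. For the Morse condition I would argue by contradiction: a $(k,c)$-quasigeodesic of $A_\Gamma$ with endpoints on $H$ that wanders far from $H$ would, after composing with the coarsely Lipschitz orbit map and invoking bounded geodesic image, project to a path in $\Gamma^e$ that leaves every uniform neighborhood of the quasiconvex set $Hx$, a contradiction; put differently, I would appeal to the general mechanism (as in Durham--Taylor \cite{DT15} for mapping class groups) by which, for an acylindrical action on a hyperbolic space with bounded geodesic image, the preimage of a quasiconvex orbit is stable, the only RAAG-specific point being that $\Gamma^e$ is rich enough to see the relevant distortion. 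Being hyperbolic and Morse, $H$ is then stable by Tran's criterion \cite{T17}, closing the circle. I expect the main obstacle to be the quantitative bookkeeping inside $(3)\Rightarrow(1)$: converting the purely qualitative hypothesis ``every nontrivial element is loxodromic'' into the uniform constants $\tau$ and the bounded-geodesic-image thresholds, and in particular handling generators of $H$ whose quasi-axes in $\Gamma^e$ are ``parallel'' or ``nested'', which is precisely where the projection-complex structure of the extension graph must be used with care.
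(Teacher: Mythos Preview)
The paper does not prove this theorem: it is quoted verbatim as Theorem~1.1 of \cite{KMT17} and used as a black box, so there is no ``paper's own proof'' to compare against. Your proposal is therefore an attempt to reprove a cited result rather than something the present paper establishes.

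That said, your sketch has a genuine gap at the crucial step $(3)\Rightarrow(1)$. You propose to show that the broken orbit path $x,\,s_1x,\,s_1s_2x,\,\dots,\,wx$ in $\Gamma^e$ is a uniform \emph{local} quasigeodesic and then invoke local-to-global. But local quasigeodesicity on windows of length $K$ amounts to a uniform lower bound on $d_{\Gamma^e}(x,ux)$ for all $H$-geodesic words $u$ of length $\le K$ --- which is exactly a quantitative form of the conclusion you are trying to reach, just restricted to short words. Acylindricity gives a uniform lower bound $\tau$ on the stable translation length of each individual loxodromic element, but it says nothing about products of \emph{distinct} generators: there is no a priori reason why the segment from $s_1\cdots s_jx$ to $s_1\cdots s_{j+1}x$ cannot undo most of the previous segment, and ``bounded geodesic image'' as you invoke it does not by itself prevent this. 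The actual argument in \cite{KMT17} is substantially more delicate: it passes through the CAT(0) cube complex geometry of the universal cover of the Salvetti complex, analyzes normal-form words and their ``star-length'' combinatorics, and shows via a disk-diagram argument that a purely loxodromic subgroup cannot have long subwords supported on a single star --- this is the missing mechanism that converts the qualitative hypothesis ``no element is a join word'' into the uniform linear lower bound you need. Your closing paragraph correctly identifies this as the obstacle, but the body of the sketch does not supply the idea that overcomes it.
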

 
Tran \cite{T17} and Genevois \cite{G17} showed, independently, that stability and Morseness are equivalent for an infinite index subgroup of the right-angled Artin group $A_\Gamma$.

\begin{theorem}[Theorem 1.16 in \cite{T17} or Theorem B.1 in \cite{G17}] \label{rightangled}
Let $\Gamma$ be a finite simplicial, connected, and anti-connected graph. Let $H$ be a finitely generated infinite index subgroup of $A_\Gamma$. Then $H$ is stable in $A_\Gamma$ if and only if $H$ is Morse in $A_\Gamma$.
\end{theorem}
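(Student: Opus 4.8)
The plan is to establish the stronger dichotomy that a finitely generated $H\le A_\Gamma$ is Morse if and only if it is either stable or of finite index; granting the infinite-index hypothesis of Theorem \ref{rightangled} this is exactly the asserted equivalence, and the two easy implications are ``finite index $\Rightarrow$ Morse'' (then $N_M(H)=A_\Gamma$ for some $M$) and ``stable $\Rightarrow$ Morse'' (the general fact noted in the introduction). So the real content is: \emph{a Morse subgroup that is not stable has finite index}. By Theorem \ref{KMT17}, non-stability of $H$ means $H$ is not purely loxodromic, so I would fix a nontrivial elliptic element $g\in H$ and aim to deduce $[A_\Gamma:H]<\infty$.

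\textbf{Step 1: a flat through $g$.} As $g$ is elliptic, after conjugating $H$ (which changes neither its index, nor its Morseness, nor the property of being purely loxodromic) I may assume $g\in A_\Lambda=A_{\Lambda_1}\times A_{\Lambda_2}$ with $\Lambda=\Lambda_1\ast\Lambda_2$ a nontrivial join; anti-connectedness of $\Gamma$ makes $\Lambda$ a proper induced subgraph. Writing $g=g_1g_2$ with $g_i\in A_{\Lambda_i}$ and using that $A_\Gamma$ is torsion-free, a short argument places $g$ in an undistorted abelian subgroup $F\cong\mathbb Z^2$ of $A_\Lambda$ (take $F=\langle g_1,g_2\rangle$ when both $g_i\ne 1$, otherwise pad the nontrivial factor with a nontrivial element of the opposite factor; abelian subgroups of RAAGs are undistorted). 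Then $g\in F\cap H$, so $F\cap H$ is infinite.

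\textbf{Step 2: Morseness forces a finite-index slice of $F$ into $H$.} Here the strong quasiconvexity of $H$ enters: if some coset direction of $F$ transverse to $F\cap H$ escaped every bounded neighbourhood of $H$, then, using that geodesics of $F$ are uniform quasigeodesics of $A_\Gamma$ (as $F$ is undistorted), I could build for each large $n$ an $L$-shaped $(k,c)$-quasigeodesic with fixed constants, endpoints $g^{\pm n}\in H$, and apex at distance tending to infinity from $H$ --- contradicting the definition of a Morse subgroup. (This is an instance of the general structural behaviour of strongly quasiconvex subgroups against undistorted flats, as in Tran's work.) Hence $[F:F\cap H]<\infty$, and $H$ contains a finite-index subgroup of the $\mathbb Z^2$ through $g$.

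\textbf{Step 3: propagate, and conclude.} From $F$ --- whose two primitive directions commute with $g$ --- I would traverse the commutation structure of $\Gamma$: whenever a vertex generator commutes with an element already known to lie in $H$, it spans a flat with that element, and Step 2 drops a finite-index part of that flat (in particular a power of the new generator) into $H$; connectedness of $\Gamma$ carries this around all of $V(\Gamma)$, and, organised carefully, shows $H$ contains a finite-index subgroup of $A_\Gamma$, whence $[A_\Gamma:H]<\infty$. This last step is the one I expect to be the main obstacle: it requires controlling how the various flats meeting $H$ in finite index fit together and ruling out their being ``parallel'', and it is precisely here that anti-connectedness is indispensable, since it prevents $A_\Gamma$ from splitting as a nontrivial direct product. (Alternatively, following Genevois: the strongly quasiconvex $H$ has a cocompact convex core in the CAT(0) cube complex $\widetilde{S_\Gamma}$; an elliptic element of $H$ forces a flat into this core, and connectedness of $\Gamma$ forces the core to be all of $\widetilde{S_\Gamma}$ up to finite index, i.e.\ $H$ to have finite index.) Steps 1 and 2 are comparatively routine, resting only on torsion-freeness, undistortedness of abelian subgroups, and the definition of Morseness.
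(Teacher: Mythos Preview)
The paper does not give its own proof of this theorem: it is quoted verbatim as a result of Tran (Theorem~1.16 in \cite{T17}) and Genevois (Theorem~B.1 in \cite{G17}) and then used as a black box in the proof of Theorem~\ref{main2}(\ref{main2-2}). So there is nothing in the paper to compare your argument against beyond the references themselves.

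As for your outline: Steps~1 and~2 are sound and indeed are the easy part, but Step~3 has a genuine gap that you yourself flag. The propagation you describe shows at best that $H$ contains a positive power of every vertex generator (and finite-index pieces of many $\mathbb{Z}^2$'s), but in a non-abelian $A_\Gamma$ this does \emph{not} imply $[A_\Gamma:H]<\infty$; subgroups generated by powers of the standard generators typically have infinite index. You would need a further argument --- again using Morseness --- to pass from ``$H$ meets every edge-flat in finite index'' to ``$H$ has finite index'', and neither Tran's nor Genevois's published proof proceeds this way. Tran's argument goes through lower relative divergence of $H$ in $A_\Gamma$, while Genevois's uses the CAT(0) cube-complex geometry of $\widetilde{S_\Gamma}$ (essentially the alternative you allude to at the end). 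If you want to make your propagation idea rigorous, the missing ingredient is a mechanism that forces Morse $H$ to swallow not just powers of generators but entire cosets --- and that is precisely what the divergence or convex-core arguments supply.
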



In Section \ref{star} and Section \ref{complex},  we prove Theorem \ref{main2}(\ref{main2-1}) by two different methods.

\subsection{Star Length}\label{star}

Recall that for a graph $\Gamma$, the \textit{link} of a vertex $v$ in $\Gamma$, denoted by Lk$(v)$, is the set of the vertices in $\Gamma$ which are adjacent to $v$ and the \textit{star} of $v$, denoted by St$(v)$, is the union of Lk$(v$) and $\{v\}$. 
Kim and Koberda \cite{KK14} defined the \textit{star metric} on $A_\Gamma$ and showed that the extension graph $\Gamma^e$ with standard graph metric is quasi-isometric to the Cayley graph of $A_\Gamma$ with the star metric.

\begin{definition}[Star length]
For a right-angled Artin group $A_\Gamma$ and an element $g\in A_\Gamma$ the \textit{star length} of an element $g$ is the minimum $l$ such that $g$ can be written as the product of $l$ elements in $ \bigcup_{v\in V( \Gamma)} \langle \text{St}(v)\rangle$. We denote the star length $g$ by $|g|_*$. The star length induces a metric $d_*$ on $A_\Gamma$ by left invariance: $d_*(g,h):=|g^{-1}h|_*$.
\end{definition}

\begin{theorem}[Theorem 15  in \cite{KK14}]\label{KK14-1}
Let $\Gamma$ be a finite connected graph.
The metric spaces $(A_\Gamma, d_*)$ and $(\Gamma^e, d_{\Gamma^e})$ are quasi–-isometric, and the orbit map $A_\Gamma \to \Gamma^e,$ $g\mapsto v^g=g^{-1}vg$ (where $v$ is a base-vertex of $\Gamma^e$) is a quasi-isometry.
\end{theorem}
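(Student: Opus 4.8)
The plan is to prove directly that the orbit map $\Phi\colon A_\Gamma\to\Gamma^e$, $\Phi(g)=v^g=g^{-1}vg$ (for a fixed base vertex $v\in V(\Gamma)\subseteq V(\Gamma^e)$) is a quasi-isometry, by a two-sided comparison of $d_*$ with the graph metric of $\Gamma^e$. First observe that the right conjugation action $\rho_x\colon u^h\mapsto u^{hx}$ of $A_\Gamma$ on $\Gamma^e$ is by graph automorphisms (adjacency means ``commuting'', which is conjugation-invariant), and that $\Phi(gx)=\rho_x(\Phi(g))$; since $d_*$ is left invariant, applying the isometry $\rho_{g^{-1}}$ reduces the whole problem to comparing $|g|_*$ with $d_{\Gamma^e}(v,v^g)$. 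Coarse density of $\Phi$ is then immediate: every vertex of $\Gamma^e$ has the form $u^g$ with $u\in V(\Gamma)$, the induced subgraph $\Gamma^g=\{w^g:w\in V(\Gamma)\}$ is isomorphic to $\Gamma$ (as $w^g\sim w'^g$ iff $[w,w']=1$), so $d_{\Gamma^e}(u^g,v^g)\le d_\Gamma(u,v)\le D$, where $D=\operatorname{diam}\Gamma<\infty$ (this is the one place connectedness of $\Gamma$ enters).

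For the upper bound $d_{\Gamma^e}(v,v^g)\le 2D\,|g|_*$ I would use the classical fact that $\langle\operatorname{St}(w)\rangle=A_{\operatorname{St}(w)}=C_{A_\Gamma}(w)$, so $\langle\operatorname{St}(w)\rangle$ fixes the vertex $w$ of $\Gamma^e$. Write $g=s_1s_2\cdots s_l$ with $l=|g|_*$ and $s_i\in\langle\operatorname{St}(w_i)\rangle$, and set $t_{l+1}=1$ and $t_j=s_js_{j+1}\cdots s_l=s_jt_{j+1}$. The essential trick is to telescope \emph{from the right}: since $v^{t_j}=(v^{s_j})^{t_{j+1}}$, applying $\rho_{t_{j+1}^{-1}}$ gives $d_{\Gamma^e}(v^{t_j},v^{t_{j+1}})=d_{\Gamma^e}(v^{s_j},v)$; now $v^{s_j}$ and $w_j^{s_j}=w_j$ both lie in the copy $\Gamma^{s_j}\cong\Gamma$, so $d_{\Gamma^e}(v^{s_j},w_j)\le D$, while $v$ and $w_j$ both lie in $\Gamma^1=\Gamma$, so $d_{\Gamma^e}(v,w_j)\le D$; hence $d_{\Gamma^e}(v^{t_j},v^{t_{j+1}})\le 2D$. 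Summing over $j=1,\dots,l$ yields $d_{\Gamma^e}(v,v^g)=d_{\Gamma^e}(v^{t_{l+1}},v^{t_1})\le 2Dl$.

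For the lower bound $|g|_*\le d_{\Gamma^e}(v,v^g)+1$ I would take a geodesic $v=p_0,p_1,\dots,p_n=v^g$ in $\Gamma^e$ with $n=d_{\Gamma^e}(v,v^g)$; each $p_k$ is a vertex, say $p_k=u_k^{g_k}$ with $u_k\in V(\Gamma)$ its well-defined type (distinct standard generators of a RAAG are non-conjugate). Choose the conjugators inductively with $g_0=1$: given $p_k=u_k^{g_k}$, the edge $p_kp_{k+1}$ means $p_{k+1}\in C_{A_\Gamma}(p_k)=g_k^{-1}A_{\operatorname{St}(u_k)}g_k$, so $c:=g_kp_{k+1}g_k^{-1}\in A_{\operatorname{St}(u_k)}$ is a nontrivial conjugate of the generator $u_{k+1}$; applying the retraction $r\colon A_\Gamma\to A_{\operatorname{St}(u_k)}$ (killing the generators off $\operatorname{St}(u_k)$) to $c=x^{-1}u_{k+1}x$ forces $u_{k+1}\in\operatorname{St}(u_k)$ and $c=r(x)^{-1}u_{k+1}r(x)$ with $r(x)\in\langle\operatorname{St}(u_k)\rangle$, so one may set $g_{k+1}:=r(x)g_k$, giving $g_{k+1}g_k^{-1}\in\langle\operatorname{St}(u_k)\rangle$. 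Then $g_n=(g_ng_{n-1}^{-1})\cdots(g_1g_0^{-1})$ is a product of $n$ star-subgroup elements, and since $v^{g_n}=p_n=v^g$ we get $g_ng^{-1}\in C_{A_\Gamma}(v)=\langle\operatorname{St}(v)\rangle$, so $g=(g\,g_n^{-1})\,g_n$ is a product of $n+1$ star-subgroup elements.

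Combining coarse density with the two inequalities (applied to $g^{-1}h$ in place of $g$ and transported by $\rho_{g^{-1}}$) shows $\Phi$ is a quasi-isometry, which is the assertion. \textbf{The main difficulty is the upper bound.} The tempting idea of tracking the images $v^{s_1\cdots s_i}$ along the \emph{left}-to-right partial products fails, because conjugating the base vertex by $s_1\cdots s_{i-1}$ can push its image in $\Gamma^e$ arbitrarily far from $w_i$, so no uniform per-step bound is available; the remedy is the right-to-left telescoping above, where each increment $v^{t_j}\to v^{t_{j+1}}$ is, after an isometry, pinned between two translates of $v$ sitting within $D$ of the single fixed vertex $w_j$. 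The remaining ingredients — that $\langle\operatorname{St}(w)\rangle$ is exactly the centralizer of $w$, that the standard subgroups $A_\Lambda$ are retracts of $A_\Gamma$, and that the type of a vertex of $\Gamma^e$ is well defined — are standard facts about right-angled Artin groups, and note that anti-connectedness of $\Gamma$ is not needed here.
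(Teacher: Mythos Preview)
The paper does not give its own proof of this statement; it is quoted verbatim as Theorem~15 of \cite{KK14} and used as a black box. So there is nothing in the paper to compare your argument against line by line.

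That said, your direct argument is correct and self-contained. The equivariance reduction to comparing $|g|_*$ with $d_{\Gamma^e}(v,v^g)$ is clean; the upper bound via the right-to-left telescoping $t_j=s_j\cdots s_l$ works exactly as you describe (the key identity $d_{\Gamma^e}(v^{t_j},v^{t_{j+1}})=d_{\Gamma^e}(v^{s_j},v)$ follows from applying the isometry $\rho_{t_{j+1}^{-1}}$, and then the fixed vertex $w_j=w_j^{s_j}$ gives the $2D$ bound); and the lower bound via the inductive choice of conjugators $g_{k+1}=r(x)g_k$ using the retraction onto $A_{\mathrm{St}(u_k)}$ is a standard and valid move in RAAGs. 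Your closing observation that anti-connectedness plays no role here is also correct: only connectedness (to get $D=\operatorname{diam}\Gamma<\infty$) is used.

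One minor stylistic point: your commentary about the left-to-right partial products ``failing'' is slightly overstated --- it is not that no argument can be made that way, only that the right-to-left telescoping is the one that reduces each step to $d_{\Gamma^e}(v,v^{s_j})$ without extra bookkeeping. This does not affect correctness.
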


\begin{definition}
Let $\Gamma$ be a simplicial, finite, connected, and anti-connected graph.
A product $g= g_1\dots g_k$ is called a \textit{star-geodesic} if $g_i\in$ St$(v_i)$ for $i=1,\dots , k$ where $v_i\in V(\Gamma)$ and $k=|g|_*$.
A word $w$ over $V(\Gamma)^{\pm 1}$ is called a \textit{star-block} if there is some $v\in V(\Gamma)$ such that every letter of $w$ belongs to St($v$).

\end{definition}

The following lemma allows us to compute the star length for an element in $A_\Gamma$ by using its normal form. Note that this fact can also be derived from Lemma 20 in \cite{KK14}.

\begin{lemma}\label{star-length}
Let $\Gamma$ be a finite simplicial, connected, and anti-connected graph. For a nontrivial element $g\in A_\Gamma$ the star length $|g|_*$  is equal to the smallest $k$ such that there exists a word $w=w_1\dots w_k$ representing $g$ such that each $w_i$ is a nontrivial star-block and that $w$ is in a normal form for $A_\Gamma$. 
\end{lemma}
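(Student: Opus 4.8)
The plan is to prove the two inequalities separately, where the essential content is showing that the normal form witnesses the star length. Let me write $k$ for $|g|_*$ and $m$ for the quantity on the right-hand side (the smallest number of star-blocks in a normal-form expression for $g$).

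First I would show $m \le k$. Take a star-geodesic expression $g = g_1 \cdots g_k$ with $g_i \in \langle \mathrm{St}(v_i)\rangle$. Each $g_i$ can be written as a word $u_i$ over $\mathrm{St}(v_i)^{\pm 1}$, so $g$ is represented by the concatenation $u_1 \cdots u_k$, a product of $k$ star-blocks (discarding any that become trivial only lowers the count). This word need not be in normal form, so the work is to run the normal-form rewriting process of the paper and argue that it does not increase the number of star-blocks. The rewriting moves are free reduction and removal of ``pyramidal'' subwords $x^\epsilon w x^{-\epsilon}$ where $x$ commutes with every letter of $w$; I would check that applying such a move within a star-block, or merging across a boundary between consecutive star-blocks $u_i$, $u_{i+1}$ (when the adjacent letters allow cancellation), never splits one star-block into two or creates a new block. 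A careful bookkeeping argument, or alternatively an appeal to the shuffling/normal-form theory of Green and Servatius as packaged in \cite{KK14}, gives that the resulting normal form is still a concatenation of at most $k$ star-blocks, so $m \le k$.

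Next I would show $k \le m$. Suppose $w = w_1 \cdots w_m$ is a normal form for $g$ with each $w_i$ a nontrivial star-block, say every letter of $w_i$ lying in $\mathrm{St}(v_i)$ for some vertex $v_i$. Then each $w_i$ represents an element of $\langle \mathrm{St}(v_i)\rangle$, so $g$ is a product of $m$ elements of $\bigcup_{v\in V(\Gamma)}\langle \mathrm{St}(v)\rangle$, whence $|g|_* \le m$. This direction is immediate from the definitions. Combining the two inequalities gives $|g|_* = m$, as claimed. I should also note that $m$ is well defined and finite: any freely reduced word representing $g$ can be put into normal form, and a normal form is trivially a product of star-blocks (each single letter is a star-block), so the set of valid $k$ is nonempty and bounded below, hence has a least element.

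The main obstacle is the $m \le k$ direction, specifically controlling the number of star-blocks under the normal-form rewriting. The subtlety is that cancellation and pyramidal reductions can interleave letters from different original blocks $u_i$ in complicated ways, and one must be sure that the shuffling inherent in reaching a normal form does not fragment a block. I expect the cleanest route is to avoid tracking an explicit rewriting sequence and instead invoke Lemma 20 of \cite{KK14} directly — the lemma statement already hints that the result ``can also be derived'' from it — translating its conclusion about the structure of normal forms of elements of bounded star length into the block-counting statement here. If a self-contained argument is wanted, one would induct on word length, handling a single reduction move at a time and verifying the block count is non-increasing in each of the finitely many cases (reduction inside a block; reduction spanning a block boundary, which merges two blocks into one or fewer; a pyramidal move whose outer letters $x^{\pm\epsilon}$ straddle a boundary, in which case $x$ commuting with the intervening letters forces those letters to lie in $\mathrm{St}(x)$ so the affected portion collapses into a single $\mathrm{St}(x)$-block).
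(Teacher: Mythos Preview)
Your outline is correct and would lead to a valid proof, but the paper takes a noticeably cleaner route for the nontrivial direction $m\le k$. Rather than starting from an arbitrary star-geodesic expression and tracking the block count through a sequence of normal-form rewriting moves, the paper fixes $k=|g|_*$ and, among \emph{all} representations $g=w_1\cdots w_k$ with each $w_i$ a star-block word, picks one of minimal total length $\sum|w_i|$. It then shows directly that this particular word is already in normal form: if it contained a subword $xux^{-1}$ with $x$ commuting letterwise with $u$, a short case analysis on which blocks $x$ and $x^{-1}$ lie in produces either a $k$-block representation of strictly smaller total length (contradicting minimality) or a representation with fewer than $k$ star-blocks (contradicting $|g|_*=k$). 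No rewriting sequence is ever followed; the minimality hypothesis does all the work.

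What each approach buys: the paper's minimality trick compresses the argument to three static cases and avoids any inductive bookkeeping. Your approach is conceptually the same case analysis, but applied dynamically at every step of a rewriting sequence, so you must maintain the block decomposition as an invariant and argue termination. This is doable, and your sketch of the cases is on the right track, though one remark: in your final case (the pyramidal move with $x^{\pm1}$ straddling a boundary), the affected portion does not literally collapse into a \emph{single} $\mathrm{St}(x)$-block---the prefix of $w_i$ before $x$ and the suffix of $w_j$ after $x^{-1}$ need not lie in $\mathrm{St}(x)$---but the block count is still non-increasing after regrouping, which is all you need. If you want to avoid this bookkeeping entirely, the paper's minimal-length device is the shortcut.
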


\begin{proof}
Take an element $1\ne g\in A_\Gamma$, and put $k=|g|_*$. Now look at all representations of $g$ as $g=w_1\dots w_k$ where each $w_i $ is a star-block word and among them choose a representation $w=w_1\dots w_k$ with $|w|= \sum_{i=1}^k |w_i|$ minimal possible. Note that this choice of $w$ implies that for all $1\le i<j\le k$, $|w_i\dots w_j|_*=j-i+1$.

We claim that this representation $g=w_1\dots w_k$ is in a normal form for $A_\Gamma$.
Note that the minimality assumption on $|w|$ implies that $w$ is a freely reduced word. 
If the word $w=w_1\dots w_k $ is in a normal form for $A_\Gamma$, we are done.  Now suppose that $w$ is not in a normal form for $A_\Gamma$.
Then $w$ contains a subword of the form $xux^{-1}$ where $x$ or $x^{-1}$ is a vertex of $\Gamma$ and $u$ is a nontrivial word and where $x$ commutes with every letter from $u$.
For simplicity, say $x\in V(\Gamma).$
Since all letters from $u$ commute with $x$, $u\in \langle \text{Star(}x)\rangle$. We take an innermost $xux^{-1}$ of this type so that $u$ is in normal form. 
Since we chose $w$ with $\sum_{i=1}^k |w_i|$ minimal possible, if $x$ occurs in $w_i$, then $x^{-1}$ comes from $w_j$ with $i<j$. 
There are several cases to consider:

\begin{enumerate}[{(i)}]
\item The letter $x$ occurs in $w_i$ and $x^{-1}$ occurs in $w_j$ where $j\ge i+3$.

\noindent
In this case, $w_{i+1}$ and $w_{i+2}$ are both subwords of $u$ in $xux^{-1}$. Since 
$u$ commutes with $x$ letterwise, the words $w_{i+1} $ and $w_{i+2}$ can be viewed as words in the generators of Star($x$). Then $g$ can be expressed as a product of strictly less than $k$ star-block, which contradicts $|g|_*=k.$ Hence, this case does not happen.

\noindent
\item  The letter $x$ occurs in $w_i$ and $x^{-1}$ occurs in $w_{i+2}$.

\noindent
The entire $w_{i+1}$ is a subword of $u$, and we can view $w_{i+1}$ as a word in the generators of Star($x$). We then rewrite $w_{i} w_{i+1} w_{i+2} =w_{i}'w_{i+1}'w_{i+2}'$, where $w_i=w_i'x$, $w_{i+1}'=u$, $w_{i+2}=x^{-1}w_{i+2}'$. We thus get a star-decomposition of $g$ as 
$w'=w_1\dots w_i'w_{i+1}'w_{i+2}'\dots w_k$ with a smaller $|w'|$ than $|w|$, which contradicts the choice of $w$.

\noindent
\item The letter $x$ occurs in $w_i$ and $x^{-1}$ occurs in $w_{i+1}$.

\noindent
In this case, $w_i=zxu_1,$ and $w_{i+1}=u_2x^{-1}y$ for some reduced words $u_1$ and $u_2$.
Since $u_1, u_2\in \langle \text{Star(}x)\rangle$,  $w_i=_{A_\Gamma} zu_1x$ and $w_{i+1}=_{A_\Gamma}x^{-1}u_2y$. Then $g$ is equal in $A_\Gamma$ to the word $w'=w_1\dots w_{i-1}(zu_1)(u_2y) w_{i+2}\dots w_k.$  
Note that $zu_1$ and $u_2y$ are star-block words. Hence, $|w'|<|w|$ which contradicts the minimal choice of $w$. 
\end{enumerate}
The claim now directly implies the statement of the lemma.\end{proof}

Lemma \ref{star-length} allows us to have the following algorithm.

\begin{corollary}\label{star-length2}
Let $\Gamma$ be a finite simplicial, connected, and anti-connected graph.
There exists an algorithm that, given a word $w$ in the generators of $A_\Gamma$ representing an element $g$, computes  $|g|_*$ and produces a star-geodesic representative of $g$. 
\end{corollary}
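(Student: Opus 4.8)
The plan is to turn Lemma \ref{star-length} into an effective search. The key observation is that Lemma \ref{star-length} characterizes $|g|_*$ as the smallest $k$ for which $g$ admits a \emph{normal form} decomposition $w = w_1 \cdots w_k$ into nontrivial star-blocks, and normal forms are algorithmically accessible: the rewriting rules defining a normal form for $A_\Gamma$ (free reduction together with elimination of subwords $x^\epsilon u x^{-\epsilon}$ with $x$ commuting with every letter of $u$) are length–nonincreasing and terminating, so given any word one can compute \emph{a} normal form representative, and the word problem in $A_\Gamma$ is solvable (e.g.\ two words are equal iff they have the same normal form up to shuffles, which is decidable). First I would, given the input word $w$ representing $g$, compute a normal form $\eta$ for $g$; if $\eta$ is empty then $g = 1$ and we output $|g|_* = 0$.

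Next, for each $k = 1, 2, 3, \dots$ in turn, I would enumerate all ways to write $\eta = \eta_1 \cdots \eta_k$ as a concatenation of $k$ nonempty subwords, and for each such factorization check whether every $\eta_i$ is a star-block, i.e.\ whether there is some $v \in V(\Gamma)$ with every letter of $\eta_i$ lying in $\mathrm{St}(v)^{\pm 1}$ — this is a finite check since $V(\Gamma)$ is finite. The point is that one need only test factorizations of the \emph{fixed} normal form $\eta$, not arbitrary words representing $g$: by the proof of Lemma \ref{star-length}, a length-minimal star-block decomposition realizing $|g|_*$ is itself in normal form, and any two normal forms of $g$ differ only by shuffling commuting letters, which does not change whether a consecutive block is a star-block nor the number of blocks. (If one wishes to avoid relying on this ``shuffle-invariance'' subtlety, one can instead, for each candidate $k$, enumerate \emph{all} words $w' = w_1'\cdots w_k'$ with each $w_i'$ a nonempty star-block and total length $\le |\eta|$ — there are finitely many — and test $w' =_{A_\Gamma} g$ using the solvable word problem; Lemma \ref{star-length} guarantees that some such $w'$ exists with $k = |g|_*$ and $|w'| \le |\eta|$, since the length-minimal decomposition is a normal form and hence no longer than $|\eta|$ is not automatic, but one may instead bound $|w'| \le |w|$ using that the minimal-length star decomposition has length $\le$ that of any normal form representative.) As soon as a valid factorization is found for some $k$, halt and output $|g|_* = k$ together with the star-geodesic $w_1 \cdots w_k$; to extract an actual star-geodesic \emph{product} $g_1 \cdots g_k$ with $g_i \in \langle \mathrm{St}(v_i)\rangle$, simply take $g_i$ to be the element represented by $\eta_i$ and $v_i$ the vertex witnessing that $\eta_i$ is a star-block.

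Termination is immediate: since $g$ is a product of finitely many generators, $|g|_* \le |w| < \infty$, so the outer loop over $k$ stops, and Lemma \ref{star-length} guarantees the first $k$ at which a factorization is found equals $|g|_*$. The main obstacle to watch is the interplay between ``normal form'' and the non-uniqueness of normal forms under commutation shuffles: one must argue carefully that restricting attention to factorizations of a \emph{single} computed normal form $\eta$ (rather than searching over all normal forms, or all words) still detects the minimal $k$. This follows from combining the length-minimality argument in the proof of Lemma \ref{star-length} with the standard fact that all normal forms of a given element of $A_\Gamma$ are related by elementary swaps of adjacent commuting letters, which preserves both the number of blocks in a star-block factorization and the star-block property of each block; I would spell this out as the one nontrivial step of the proof and leave the rest as routine.
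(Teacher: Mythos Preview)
Your primary approach---fix a single normal form $\eta$ and search only among factorizations of $\eta$---has a genuine gap: the ``shuffle-invariance'' claim you flag as the one nontrivial step is in fact \emph{false}. Commutation swaps of adjacent letters can change the minimal number of star-blocks needed to factor a given normal form word. For a concrete counterexample, take $\Gamma$ to be the path $a\!-\!b\!-\!c\!-\!d\!-\!e\!-\!f$ (which is connected and anti-connected). The element $g=acdf$ has the normal form $acdf=(ac)(df)$, a product of two star-blocks since $\{a,c\}\subseteq\mathrm{St}(b)$ and $\{d,f\}\subseteq\mathrm{St}(e)$; hence $|g|_*=2$. But $adcf$ is another normal form of $g$ (swap the commuting pair $c,d$), and $adcf$ admits \emph{no} decomposition into two star-blocks: one checks directly that none of $\{a,d\}$, $\{d,c,f\}$, $\{a,d,c\}$, $\{c,f\}$ is contained in any single star. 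So if your algorithm happens to compute $\eta=adcf$, it will incorrectly output $|g|_*=3$.

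Your parenthetical fallback---enumerate all words of bounded length built from $k$ star-blocks and test equality with $g$ via the word problem---does work, once you use the standard fact that every normal form in $A_\Gamma$ is a geodesic, so the length of the normal form produced by Lemma~\ref{star-length} equals $|\eta|$. The paper's own proof takes what amounts to a cleaner version of this fallback: rather than enumerating arbitrary star-block words, it computes \emph{all} normal forms of $g$ (there are finitely many, all obtainable from one another by commutation shuffles, and this set is computable) and then finds the one admitting the fewest star-blocks. Lemma~\ref{star-length} guarantees that minimum equals $|g|_*$. This avoids both the incorrect shuffle-invariance claim and the somewhat garbled length-bound discussion in your fallback.
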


\begin{proof}
 Given a word $w$ in the generators of $A_\Gamma$, compute all normal forms of the element $g$ represented by $w$. Note that these normal forms can be computed algorithmically \cite{DKL12} and that they all are related to each other by a process of shuffling the letters using the commutativity relations in $A_\Gamma$ \cite{HM95}.  Among these normal forms, find the one which decomposes as a product of the smallest number of star-block words. Lemma \ref{star-length} implies that this numbers equals $|g|_*$. 
\end{proof}

\begin{proof}[First proof of Theorem \ref{main2}(\ref{main2-1})]
Pick a base-vertex $v\in V(\Gamma)$. We also regard $v$ as a base-vertex of the extension graph $\Gamma^e$.
Suppose that $H$ is a finitely generated subgroup of $A_\Gamma$ given by a finite generating set $S$ in $A_\Gamma$ such that $S=S^{-1}$. 
Let $f : A_\Gamma \to \Gamma^e$ be an orbit map sending $g$ to $v^g$ for every $g\in A_\Gamma$.
We can extend $f$ to the Cayely graph $\Gamma_{A_\Gamma}$ of $A_\Gamma$ sending an edge $(g, gs)$ to $(v^g, v^{gs})$. By Theorem \ref{KK14-1}, $f: (A_\Gamma, d_*)\to \Gamma^e$ is a quasi-isometry.
Let $g : (\Gamma_{A_\Gamma}, d) \to (\Gamma_{A_\Gamma}, d_*)$ be the identity map, where $d$ is the word metric on $\Gamma_{A_\Gamma}$.
Note that $g$ sends a geodesic of length $n$ in $(\Gamma_{A_\Gamma}, d)$ to 
a path of $d_*$-distance between its endpoints $\le n$ in $(\Gamma_{A_\Gamma}, d_*)$.
By Theorem \ref{KMT17}, the subgroup $H$ is stable in $A_\Gamma$ if and only if there exist integers $\lambda\geq 1, \epsilon \geq 0$ such that for every geodesic $w$ in $(\Gamma_{A_\Gamma}, d)$ the image $f\cdot g(w)$ is $(\lambda, \epsilon)$-quasigeodesic in $\Gamma^e$.
By Theorem \ref{KK14-1}, this condition is equivalent to the existence of $\lambda'\geq 1, \epsilon' \geq 0$ such that for every geodesic $w$ starting from 1 in $(\Gamma_{A_\Gamma}, d)$ the image $g(w)=w$ is $(\lambda', \epsilon')$-quasigeodesic in $(\Gamma_{A_\Gamma}, d_*)$.
The algorithm for deciding stability is as follows. 

Start enumerating all pairs of integers $\lambda\geq 1, \epsilon \geq 0$.
For every such pair $\lambda, \epsilon$, compute the constants $K$, $\lambda' \geq 1$ and $\epsilon'\geq 0$ from Lemma \ref{lem}.
Using the solution of the word problem in $A_\Gamma$, list all the geodesic edge-paths $w$ in $(\Gamma_{A_\Gamma}, d)$ of length at most $K$. 
For every such $w$,  check whether or not $w$  is a ($\lambda', \epsilon')$-quasigeodesic in $(\Gamma_{A_\Gamma}, d_*)$ by Lemma \ref{star-length2}. 
If the answer is `yes' for all such $w$, we terminate the algorithm and declare that $H$ is stable in $A_\Gamma$. Otherwise we proceed to the next pair of $\lambda$ and $\epsilon$. This algorithm terminates if and only if $H$ is stable. In parallel, we keep enumerating elements of $H$, find their cyclically reduced forms
and, using Theorem \ref{KMT17-0}  check whether the element of $H$ under consideration is loxodromic. 
If we find a non-loxodromic element of $H$, we terminate the algorithm and declare that $H$ is not stable by Theorem \ref{KMT17}. 

Taken together, this procedure provides a complete algorithm for detecting stability of $H$ in $A_\Gamma$.\end{proof}

\subsection {Cube complex }\label{complex}

In this section, we use \cite{H08, MT16} as background references.
Let $\Gamma$ be a finite simplicial graph.
Recall the construction of the Salvetti complex $S_\Gamma$ for $A_\Gamma$.
Begin with a wedge of circles attached to a vertex $x_0$ and labeled by the elements of $V(\Gamma)$. For each edge, say from $v_i$ to $v_j$ in $\Gamma$, attach a 2-torus with boundary labeled by the relator $v_i^{-1}v_j^{-1}v_iv_j$. For each triangle in $\Gamma$ connecting three vertices $v_i,v_j,v_k$, attach a 3-torus with faces corresponding to the tori for the three edges of the triangle. Continue this process, attaching a $k$-torus for each set of $k$ mutually commuting generators. The resulting space is called the \textit{Salvetti complex} $S_\Gamma$ for $A_\Gamma$.
Note that $\pi_1(S_\Gamma, x_0)=A_\Gamma$ and that the universal cover $\widetilde{S_\Gamma}$ of $S_\Gamma$ is a CAT(0) cube complex.

\begin{lemma}[Lemma 3.5 in \cite{MT16}]\label{MT16.3.5}
Let $\Gamma$ be a finite simplicial graph and let $H$ be a quasiconvex subgroup of $A_\Gamma$ with respect to the standard generators of $A_\Gamma$. Then there exists a pointed finite connected cube complex $(C, x)$ and a cubical local isometry $\phi : (C, x) \rightarrow (S_\Gamma, x_0)$ with $H =\phi_\ast(\pi_1(C, x))$. 
\end{lemma}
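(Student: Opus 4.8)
The plan is to recall that a quasiconvex subgroup $H$ of $A_\Gamma$, being undistorted in a CAT(0) cube complex group, acts convex cocompactly on a suitable convex subcomplex of the universal cover $\widetilde{S_\Gamma}$, and then to take the quotient. Concretely, I would first fix a basepoint $\tilde{x}_0 \in \widetilde{S_\Gamma}$ lying over $x_0$, and consider the orbit $H \cdot \tilde{x}_0$. Since $H$ is quasiconvex with respect to the standard generators, the set $H\cdot\tilde x_0$ is quasiconvex in the $1$-skeleton of $\widetilde{S_\Gamma}$; by the standard theory of cube complexes (Haglund, and the combinatorial convexity of Sageev–Wise), one can replace its ``combinatorial convex hull'' by an actual convex subcomplex $Y \subseteq \widetilde{S_\Gamma}$ on which $H$ acts cocompactly. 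The action is free since $A_\Gamma$ is torsion-free and acts freely on $\widetilde{S_\Gamma}$.

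The key steps, in order, are: (i) produce the $H$-invariant convex subcomplex $Y$ with $H \backslash Y$ compact; (ii) set $C := H\backslash Y$, a finite connected cube complex, with basepoint $x$ the image of $\tilde x_0$; (iii) observe that the inclusion $Y \hookrightarrow \widetilde{S_\Gamma}$ is $H$-equivariant and descends to a cubical map $\phi : (C,x) \to (S_\Gamma, x_0)$; (iv) check $\phi$ is a local isometry — this follows because $Y$ is a \emph{convex} (hence locally convex) subcomplex of the CAT(0) cube complex $\widetilde{S_\Gamma}$, so locally $Y$ sits inside $\widetilde{S_\Gamma}$ as a full subcomplex closed under taking cubes spanned by its edges, which is exactly the combinatorial local-isometry (``no missing shuffles'') condition of \cite{H08}; and (v) verify $\phi_\ast(\pi_1(C,x)) = H$, which holds because $Y$ is simply connected (it is a convex subcomplex of a CAT(0) space, hence CAT(0)) and $H$ acts freely on it, so $\pi_1(C,x) \cong H$ via the deck action, and this isomorphism is exactly $\phi_\ast$ under the identification $\pi_1(S_\Gamma,x_0)=A_\Gamma$.

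I expect step (i) — producing the convex cocompact core $Y$ — to be the main obstacle, since ``quasiconvex with respect to the standard generators'' is an a priori coarse, metric statement, whereas we need an honest convex subcomplex. The resolution is to invoke the fact that in a CAT(0) cube complex a quasiconvex subgroup orbit has a well-defined combinatorial convex hull (intersection of all halfspaces containing it), that this hull is a convex subcomplex, and that cocompactness of the action on the hull is equivalent to quasiconvexity of the orbit; this is part of the standard dictionary (see \cite{H08} and the references in \cite{MT16}). Once $Y$ is in hand, the remaining steps (ii)--(v) are formal: finiteness of $C$ is cocompactness, connectedness is transitivity of $H$ on components of $Y$ after possibly enlarging $Y$ to be connected, the local isometry property is local convexity, and the $\pi_1$ computation is the standard covering-space argument. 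A remark worth including is that this lemma is exactly Lemma 3.5 of \cite{MT16}, so in the paper it suffices to cite it; the sketch above indicates why it is true.
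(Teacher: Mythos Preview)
Your sketch is correct and is essentially the Haglund convex-core argument underlying Lemma~3.5 of \cite{MT16}: take the combinatorial convex hull of an $H$-orbit in $\widetilde{S_\Gamma}$, quotient by the free $H$-action, and observe that convexity yields the local-isometry condition. Note, however, that the present paper gives no proof of this statement at all---it is simply quoted as Lemma~3.5 of \cite{MT16}---so there is nothing to compare against here; your own final remark already acknowledges that a citation suffices.
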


\begin{lemma}[Lemma 3.6 in \cite{MT16}]\label{MT16.3.6}
Let $\Gamma$ be a finite simplicial graph and let $H$  be a quasiconvex subgroup of $A_\Gamma$ with respect to the standard generators of $A_\Gamma$.
Let $C$ be the pointed finite connected cube complex as in Lemma \ref {MT16.3.5}.
Then oriented edge-loops at $x \in C$ which are combinatorial local geodesics are in bijective correspondence with minimal length words in $H$ with respect to the standard generators of $A_\Gamma$.

\end{lemma}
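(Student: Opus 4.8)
The plan is to lift the picture to universal covers, where the claim becomes a comparison of combinatorial geodesics in CAT(0) cube complexes. I would write $p\colon\tilde C\to C$ and $q\colon\widetilde{S_\Gamma}\to S_\Gamma$ for the universal coverings, fix a lift $\tilde x$ of $x$, and identify the vertex set of $\widetilde{S_\Gamma}$ with $A_\Gamma$, so that $\widetilde{S_\Gamma}^{(1)}$ is the Cayley graph of $A_\Gamma$ on $V(\Gamma)^{\pm1}$ with $\tilde x_0\leftrightarrow 1$; in particular the combinatorial distance in $\widetilde{S_\Gamma}$ from $1$ to $h$ is the word length $|h|$ of $h$, and combinatorial geodesic edge-paths from $1$ to $h$ correspond to geodesic words representing $h$. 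Since $\phi$ is a cubical local isometry and $S_\Gamma$ is non-positively curved, $C$ is non-positively curved too, so $\tilde C$ and $\widetilde{S_\Gamma}$ are CAT(0); recall also that in a CAT(0) cube complex every combinatorial local geodesic is a combinatorial geodesic. Next I would lift $\phi$ to a $\phi_\ast$-equivariant combinatorial map $\tilde\phi\colon\tilde C\to\widetilde{S_\Gamma}$ with $\tilde\phi(\tilde x)=\tilde x_0$; by the standard behaviour of local isometries of non-positively curved cube complexes (see \cite{H08}), $\tilde\phi$ is an isometric embedding onto a convex subcomplex of $\widetilde{S_\Gamma}$. The correspondence sends an oriented edge-loop $\ell$ at $x$ to the word $w(\ell)$ over $V(\Gamma)^{\pm1}$ obtained by reading the edge labels of $\tilde\phi(\tilde\ell)$ (each edge of $\widetilde{S_\Gamma}^{(1)}$ carrying the label of the corresponding generator of $A_\Gamma$), where $\tilde\ell$ is the lift of $\ell$ starting at $\tilde x$; note that $\tilde\ell$ ends at $\gamma\cdot\tilde x$ for $\gamma=[\ell]\in\pi_1(C,x)$, so $\tilde\phi(\tilde\ell)$ runs from $1$ to $h:=\phi_\ast(\gamma)\in H$ and $w(\ell)$ represents $h$.

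First I would check that this lands in the intended target. If $\ell$ is a combinatorial local geodesic then so is $\tilde\ell$, since $p$ is a local isometry; hence $\tilde\ell$ is a combinatorial geodesic in the CAT(0) complex $\tilde C$, and since $\tilde\phi$ is an isometric embedding, $\tilde\phi(\tilde\ell)$ is a combinatorial geodesic in $\widetilde{S_\Gamma}$ from $1$ to $h$. Thus $|w(\ell)|$ equals the length of $\tilde\phi(\tilde\ell)$, which is $d(1,h)=|h|$, so $w(\ell)$ is a minimal length word in $H$. Next I would prove injectivity: because $\phi$ is a local isometry it is injective on the link of every vertex of $C$, so distinct oriented edges issuing from a vertex of $C$ carry distinct labels in $V(\Gamma)^{\pm1}$; consequently an oriented edge-path in $C$ is determined by its initial vertex together with its label word, and two combinatorial local geodesic loops at $x$ with the same image word must coincide.

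For surjectivity, let $w$ be a minimal length word representing some $h\in H$, and choose $\gamma\in\pi_1(C,x)$ with $\phi_\ast(\gamma)=h$ (possible since $H=\phi_\ast\pi_1(C,x)$). Reading $w$ in $\widetilde{S_\Gamma}^{(1)}$ produces a combinatorial geodesic $\sigma$ from $1$ to $h$; since $\tilde\phi(\tilde C)$ is convex and contains the endpoints $\tilde\phi(\tilde x)=1$ and $\tilde\phi(\gamma\tilde x)=h$ of $\sigma$, the geodesic $\sigma$ lies in $\tilde\phi(\tilde C)$, hence $\sigma=\tilde\phi(\tilde\sigma)$ for a unique edge-path $\tilde\sigma$ in $\tilde C$ from $\tilde x$ to $\gamma\tilde x$; as $\tilde\phi$ is an isometric embedding, $\tilde\sigma$ has the same length as $\sigma$, namely $d_{\tilde C}(\tilde x,\gamma\tilde x)$, so $\tilde\sigma$ is a combinatorial geodesic in $\tilde C$. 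Projecting, $\ell:=p(\tilde\sigma)$ is an oriented edge-loop at $x$ whose lift at $\tilde x$ is the geodesic $\tilde\sigma$, so $\ell$ is a combinatorial local geodesic (again $p$ is a local isometry), and by construction $w(\ell)=w$.

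The main obstacle I anticipate is conceptual rather than computational: because $C$ need not be simply connected, ``combinatorial local geodesic in $C$'' is strictly weaker than ``combinatorial geodesic in $C$'', so the argument only works after passing to the CAT(0) cover $\tilde C$, where the two notions agree. Correspondingly, the one external input doing real work is that a cubical local isometry of non-positively curved cube complexes lifts to an isometric embedding of universal covers with convex image --- this single fact powers both the step showing $w(\ell)$ is minimal and the surjectivity step.
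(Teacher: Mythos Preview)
The paper does not supply a proof of this lemma; it is quoted as Lemma~3.6 of \cite{MT16} and used as a black box (the adjacent proof environment belongs to Lemma~\ref{MT16.3.7}). Your argument is correct and is essentially the standard one: pass to universal covers, use that a cubical local isometry of non-positively curved cube complexes lifts to a combinatorial isometric embedding onto a combinatorially convex subcomplex of $\widetilde{S_\Gamma}$ (as in \cite{H08}), and use that in a CAT(0) cube complex every combinatorial local geodesic is a global combinatorial geodesic. Your injectivity step---labels determine edge-paths from $x$ because $\phi$ is injective on links, so the $1$-skeleton of $C$ is a folded labeled graph---is precisely the observation the present paper records a few lines later in its proof of Lemma~\ref{MT16.3.7}.
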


\begin{lemma}\label{MT16.3.7}
Let $\Gamma$ be a finite simplicial graph and let $H$ be a quasiconvex subgroup of $A_\Gamma$ with respect to the standard generators of $A_\Gamma$.
Let $(C, x)$ be the pointed finite connected cube complex as in Lemma \ref {MT16.3.5}. Then
\begin{enumerate}[{(i)}]
\item for an infinite order element $h\in H$, there exists a cyclically reduced normal form $w$ of $h$ such that $w$ is corresponding to a nontrivial closed loop based at some vertex (not necessarily equal to $x$) of $C$, and
\item the subgroup $H$ contains a non-loxodromic element if and only if there is a nontrivial simple loop in $C$ whose label is a join word.

\end{enumerate}
\end{lemma}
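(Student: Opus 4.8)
The plan is to exploit the cubical local isometry $\phi:(C,x)\to(S_\Gamma,x_0)$ from Lemma \ref{MT16.3.5} and the dictionary between local geodesics in $C$ and minimal-length words in $H$ provided by Lemma \ref{MT16.3.6}. For part (i), start with an infinite order element $h\in H$. Passing to a conjugate does not change whether $h$ lies in a join subgroup, and cyclic permutations of a cyclically reduced normal form correspond to conjugating by initial subwords, so I want to realize some cyclically reduced normal form of $h$ as an edge-loop in $C$. Concretely, take a minimal-length word $u$ in $H$ representing a conjugate of a cyclically reduced form of $h$ that is itself cyclically reduced; by Lemma \ref{MT16.3.6} this $u$ is an oriented edge-loop at $x$ which is a combinatorial local geodesic. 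The point is that a combinatorial local geodesic edge-path in the Salvetti complex is labeled by a word in normal form, and closed local geodesics that are cyclically local-geodesic are labeled by cyclically reduced normal forms; since $\widetilde{S_\Gamma}$ is CAT(0), local geodesics are global geodesics, and this is where the normal-form property is forced. The loop need not be based at $x$: after possibly cyclically permuting $u$ (which moves the basepoint around the loop to another vertex of $C$ lying over $x_0$) we obtain the desired nontrivial closed loop $w$ based at some vertex of $C$ whose label is a cyclically reduced normal form of $h$.

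For part (ii), the forward direction: suppose $H$ contains a non-loxodromic element, i.e.\ an infinite order element $h$ conjugate into a join subgroup $A_\Lambda$ with $\Lambda$ a nontrivial join. Then some cyclically reduced normal form of $h$ is a join word (Theorem \ref{KMT17-0}(5)). Apply part (i) to get a closed loop $w$ in $C$ labeled by this cyclically reduced normal form. If $w$ is not already a simple loop, it crosses some vertex of $C$ twice, so it factors as a concatenation of strictly shorter closed loops at that vertex; one checks that each such subloop is still labeled by a (sub)word of a join word, hence a join word, and is still a combinatorial local geodesic, so it corresponds to a nontrivial element of $H$ of infinite order — here one iterates, taking the shortest nontrivial closed subloop, to arrive at a simple loop whose label is a nontrivial join word. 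The reverse direction is cleaner: given a nontrivial simple loop in $C$ with join-word label, $\phi$ sends it to a closed loop in $S_\Gamma$, so its label represents an element $g\in H$; a nontrivial simple local geodesic loop has nontrivial label (it is a local geodesic of positive length in a nonpositively curved complex, so it is not null-homotopic and in fact the label is a nonempty normal form word), and being a join word it is not loxodromic by Theorem \ref{KMT17-0}(5), and it has infinite order because a nonempty cyclically reduced word in $A_\Gamma$ has infinite order.

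The main obstacle I anticipate is the bookkeeping in part (i): verifying that when one decomposes a non-simple local-geodesic loop into simple subloops, each subloop's label is \emph{still} both a normal form word (so that it genuinely represents a nontrivial infinite-order element, not the identity after cancellation) \emph{and} still a join word with the join structure preserved. The normal-form issue is handled by the CAT(0) geometry — subpaths of combinatorial geodesics in $\widetilde{S_\Gamma}$ are geodesics, hence normal form words — but one must be careful that a cyclic subword of a \emph{cyclically reduced} normal form need not itself be cyclically reduced, only reduced, which is still enough to be nontrivial of infinite order once we know it is a subword of a join word sitting inside a join subgroup. A secondary subtlety is making sure the basepoint of the simple loop, which may differ from $x$, still lies over $x_0$ so that $\phi$ carries the loop to a genuine element of $A_\Gamma$; this follows because $\phi$ is a cubical map sending vertices to vertices and $S_\Gamma$ has a single vertex $x_0$.
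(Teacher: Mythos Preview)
Your overall strategy matches the paper's, and part (ii) is essentially the paper's argument (with the minor caveat that in the reverse direction the simple loop's label is a priori only an element of $A_\Gamma$; one must conjugate by the label of a path from $x$ to the loop's basepoint to obtain an element of $H=\pi_1(C,x)$, as the paper does explicitly).

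There is, however, a genuine gap in your part (i). You assert that cyclically permuting the minimal-length word $u$ for $h$ produces a loop whose label is a cyclically reduced normal form of $h$. But cyclic permutation preserves word length: if the normal form of $h$ is $aba^{-1}$ with $a,b$ non-commuting, no cyclic permutation is even freely reduced, while the cyclically reduced form $b$ is strictly shorter. Moving the basepoint around the loop gives you a loop of the same length, not the shorter cyclically reduced one. The paper's argument is different: choose a normal form of $h$ of the shape $w'=p\cdot v\cdot p^{-1}$ with $v$ cyclically reduced; by Lemma \ref{MT16.3.6} this $w'$ labels a local-geodesic loop at $x$ in $C$. The key input you are missing is that $\phi$ is a cubical local isometry, hence an immersion on $1$-skeleta, so $C^{(1)}$ is a \emph{folded} labeled graph in the Stallings sense. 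Foldedness forces the initial $p$-segment and the terminal $p^{-1}$-segment of this loop to traverse the same edge-path in opposite directions, so they share an endpoint $y$, and the middle $v$-segment is then an honest closed loop in $C$ based at $y$. Without the folded-graph property there is no reason word-level cancellation should correspond to path-level backtracking in $C$, and your cyclic-permutation move alone does not produce the shorter loop you need.
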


\begin{proof}
Let $h$ be an infinite order element in $H$.
By Lemma \ref{MT16.3.6}, a minimal length representative $w$ of $h$ corresponds to an edge-loop based at $x$ which is a combinatorial local geodesic in $C$.
If $w$ is cyclically reduced then we are done.
Suppose now that $w$ is not cyclically reduced. Then there exists a normal form $w'$ of $h$ which has the form $w'=u (v_{i}\dots v_j) u^{-1}$ where $u$ is nontrivial and $v_{i}\dots v_j$ is a cyclically reduced normal form of $h$, with $v_i\in V(\Gamma)^{\pm}$.
Note that, by Lemma \ref{MT16.3.5}, the map from the complex $C$ to the Salvetti complex $S_\Gamma$ is an immersion so that the 1-skeleton of $C$ is a folded graph in the free group sense, see \cite{KM02} for more details. That guarantees that there is at most one path starting at the base point $x$ and labeled by $u$. The end point of this path is a vertex $y$ of $C$ and the portion of $w$ corresponding to $v_{i}\dots v_j$ gives a loop in $C$ based at $y$ and labeled by $u_i\dots u_j$. This completes the proof of part (i). 

For part (ii), first suppose that there is a simple closed loop based at a vertex $y$ in $C$ whose label is a join word.  
Let $u$ be the label of the path starting from $x$ terminating at $y$ and let $v$ be the label of the simple closed loop. Since $\pi_1(C,x)=H$, the non-loxodromic element $uvu^{-1}$ is in $H$.
Suppose now that $H$ contains a nontrivial non-loxodromic element $h$. Then by part (i) there exists a nontrivial loop $\alpha$ in $C$ at some vertex $y$ with the label $w$ of $\alpha$ being a cyclically reduced normal form of $h$. Then $w$ is a join word, and every subword of $w$ is a join word. We can find a nontrivial subpath $\beta$ of $\alpha$ such that $\beta$ is a simple closed loop at some vertex of $C$. The label of $\beta$ is a subword of $w$ and thus is a join word. This completes the proof of part (ii).  \end{proof}

\begin{proof}[Seond proof of Theorem \ref{main2}(\ref{main2-1})]
Let $H$ be a finitely generated subgroup of $A_\Gamma$ given by a finite generating set $S$. 
Start enumerating candidate finite base-pointed connected cube complexes $(C,x)$ admitting cubical locally isometric maps to $S_\Gamma$.
For each such $(C,x)$, pick a finite generating set $Y$ for the fundamental group $\pi_1(C^{(1)},x)$ of the 1-skeleton of $C$. 
Start enumerating all words in $S^{\pm 1}$ and the labels of all loops in $C^{(1)}$ based at $x$. Then check, using the word problem for $A_\Gamma$, whether elements of $S$ all appear in the second list and whether all elements of $Y$ appear in the first list. 
Suppose that we find such $(C,x)$  with $\pi_1(C,x)=H$. Now check all finite simple loops in $C^{(1)}$. 
If all such loops are labeled by not a join word then we declare that $H$ is stable in $A_\Gamma$, and otherwise we declare that $H$ is not stable.
In parallel, with the above process involving cube complexes $(C,x)$, we keep enumerating elements of $H$ and check them for being non-loxodromic. If there is a non-loxodromic element in $H$, then we declare that $H$ is not stable. 

Note that if the subgroup $H$ is stable, then Lemma \ref{MT16.3.5} guarantees that we eventually find such $(C,x)$ where we check the existence of a non-loxodromic element of $H$ by Lemma \ref{MT16.3.7}(2).
Note also that by Theorem \ref{KMT17} the subgroup $H$ is stable if and only if $H$ is purely loxodromic. 
Therefore, this procedure gives a complete algorithm for deciding whether or not $H$ is stable in $A_\Gamma$.
\end{proof}

\begin{proof}[Proof of Theorem \ref{main2}(\ref{main2-2})]
Let $H$ be a finitely generated subgroup of $A_\Gamma$ given by a finite generating set $S$.
We first run one of the complete algorithms in Theorem \ref{main2}(\ref{main2-1}) for deciding stability of $H$ in $A_\Gamma$. If the algorithm terminates with the conclusion that $H$ is stable, then we declare that $H$ is Morse in $A_\Gamma$. Suppose now that the algorithm in Theorem \ref{main2}(\ref{main2-1}) terminates with the conclusion that $H$ is not stable in $A_\Gamma$. We then keep running the Todd-Coxter algorithm on $H$, for detecting finitenness of the index of $H$ in $A_\Gamma$. 
If the Todd-Coxeter algorithm terminates then we declare that $H$ is Morse in $A_\Gamma$. Otherwise we continue running the Todd-Coxeter algorithm forever.

Note that the subgroup $H$ is Morse in $A_\Gamma$ if and only if either $H$ is stable or $H$ has finite index in $A_\Gamma$ by Theorem \ref{rightangled}.
Hence, taken together, the above procedure provides a partial algorithm for detecting Morseness of $H$ in $A_\Gamma$. \end{proof}

\begin{remark}
Similar to mapping class groups, there is no known algorithm that, given a finitely generated subgroup $H$ of $A_\Gamma$, decides that $H$ has infinite index in $A_\Gamma$. If such an algorithm is found, we can improve Theorem \ref{main2}(\ref{main2-2})
to a complete algorithm deciding Morseness of $H$ in $A_\Gamma$.
\end{remark}




\section{Toral relatively hyperbolic groups}\label{4}

There are various definitions of a relatively hyperbolic group, see \cite{O06, H13} for more details.
In this paper we use the following definition of relative hyperbolicity, due to Bowditch \cite{B99}. 

\begin{definition}[Relatively hyperbolic groups]

Let $G$ be a finitely generated group and let $\mathbb P$ be a (possibly empty) finite collection of finitely generated subgroups of $G$.  
Suppose that $G$ acts on a $\delta$-hyperbolic graph $K$ with finite edge stabilizers and finitely many orbits of edges (and hence also of vertices). 
Suppose that each edge of $K$ is contained in only finitely many circuits of length $n$ for each integer $n$, and that $\mathbb{P}$ is a set of representatives of the conjugacy classes of infinite vertex stabilizers. Then $(G, \mathbb{P})$ is a \textit{relatively hyperbolic} group with respect to $\mathbb{P}$. An element $P$ of $\mathbb{P}$ is called a \textit{peripheral} subgroup of $G$.
\end{definition}

For a relatively hyperbolic group $(G, \mathbb{P}=\{P_1, \dots , P_n\})$, we allow the case $n=0$, in which situation the family $\mathbb{P}$ is empty and the group $G$ is word-hyperbolic.

\begin{definition}
Let $(G,\mathbb{P})$ be a relatively hyperbolic group. An element $g\in G$ is \textit{elliptic} if it has finite order, \textit{parabolic} if it has infinite order and is conjugate to a subgroup of some $P\in\mathbb{P}$, and \textit{hyperbolic} or \textit{loxodromic} otherwise.  A subgroup $H$ of $G$ is \textit{elliptic} if it is finite, \textit{parabolic} if it is infinite and contained in a conjugate of a peripheral subgroup $P\in \mathbb{P}$, and \textit{hyperbolic} otherwise.
\end{definition}

The notion of relatively quasiconvex subgroups plays an important role in the theory of relatively hyperbolic groups.
Note that there are several definitions of relative quasiconvexity, and they are equivalent for a finitely generated relatively hyperbolic group \cite{H13}.
Recall that for a group $G$ with a collection of subgroups $\mathbb{P} = \{P_1,\dots ,P_n\}$, a subset $S$ of $G$ is called \textit{relative generating set} for the pair $(G, \mathbb{P})$ if the set $S \cup P_1 \cup \dots \cup P_n$ generates $G$.

\begin{definition}[Relatively quasiconvex subgroups]\label{quasiconvex}
 Let $(G,\mathbb P)$ be a relatively hyperbolic group. 
A subgroup $H$ of $G$ is \textit{relatively quasiconvex} if the following holds. Let $S$ be some (any) finite relative generating set for $(G,\mathbb{P})$, and let $\mathcal{P}$ be the union of all $P \in \mathbb{P}$. Consider the Cayley graph $\Gamma =$ Cayley$(G, S \cup \mathcal{P})$ with all edges of length one. Let $d$ be some (any) proper, left invariant metric on $G$. Then there is a constant $k = k(S,d)$ such that for each geodesic $c$ in $\Gamma$ connecting two points of $H$, every vertex of $c$ lies within a $d-$distance $k$ of $H$. 
\end{definition}

It is known that every undistorted finitely generated subgroup of a relatively hyperbolic group $(G, \mathbb{P})$ is relatively quasiconvex in $G$ \cite{H13}. Hence, stable and Morse subgroups of $G$ are relatively quasiconvex.
Also, each peripheral subgroup $P\in \mathbb{P}$ is relatively quasiconvex in $G$ since $P\in \mathbb{P}$ is Morse in $G$ \cite{DS05}.
For an undistorted subgroup $H$ of $G$, Tran \cite{T17} gave the following complete characterizations of stability and Morseness of $H$.
Recall our convention regarding denoting conguates of elements and of subgroups for a group $G$: $x^g=g^{-1}xg$, $H^g=g^{-1}Hg$ for $x, g\in G$ and $H\leqslant G$.

\begin{theorem}[Theorem 1.9 in \cite{T17}]\label{T17}
Let $(G, \mathbb{P})$ be a finitely generated relatively hyperbolic group and let $H$ be an undistorted  finitely generated subgroup of $G$. Then the following are equivalent:
\begin{enumerate}[{(1)}]
\item The subgroup $H$ is Morse in $G$.
\item The subgroup $H \cap P^g$ is Morse in $P^g$ for each conjugate $P^g$ of a peripheral subgroup in $\mathbb{P}$.
\item The subgroup $H \cap P^g$ is Morse in $G$ for each conjugate $P^g$ of a peripheral subgroup in $\mathbb{P}$.
\end{enumerate}
\end{theorem}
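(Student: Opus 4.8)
The plan is to prove the equivalences through the cycle $(1)\Rightarrow(3)$, $(3)\Leftrightarrow(2)$, and $(3)\Rightarrow(1)$, treating $(3)\Leftrightarrow(2)$ as one elementary unit and isolating $(3)\Rightarrow(1)$ as the substantive part. Throughout I would fix a finite generating set $S$ of $G$, work in the Cayley graph $\Gamma$ of $G$ with respect to $S$, and use two facts recorded above: an undistorted finitely generated subgroup of $(G,\mathbb{P})$ is relatively quasiconvex \cite{H13}, and every peripheral subgroup, hence every conjugate $P^g$, is Morse in $G$ \cite{DS05}. Since the $P_i$ are finitely generated, this last fact already makes each $P^g$ undistorted in $G$: a $G$-geodesic between two points of $P^g$ lies in a bounded neighborhood of $P^g$ by Morseness, and pushing its vertices onto $P^g$ produces a $d_{P^g}$-path whose consecutive vertices are $G$-close, hence (the ball of that radius in $G$ being finite) uniformly $d_{P^g}$-close, so $d_{P^g}\le A\,d_G$ on $P^g$.

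For $(3)\Leftrightarrow(2)$, fix a conjugate $P^g$. If $H\cap P^g$ is Morse in $P^g$ and $\gamma$ is a $(k,c)$-quasigeodesic of $\Gamma$ with endpoints in $H\cap P^g$, then Morseness of $P^g$ gives $\gamma\subseteq N_{M_0}(P^g)$; replacing each vertex of $\gamma$ by a nearest point of $P^g$ and using undistortion of $P^g$ produces a quasigeodesic $\gamma'$ of $(P^g,d_{P^g})$, with constants depending only on $(k,c)$, with the same endpoints and with $\gamma\subseteq N_{M_0}(\gamma')$. Morseness of $H\cap P^g$ in $P^g$ keeps $\gamma'$ in a bounded $d_{P^g}$-neighborhood of $H\cap P^g$, so $\gamma\subseteq N_M(H\cap P^g)$ in $G$; hence $(2)\Rightarrow(3)$. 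Conversely, if $H\cap P^g$ is Morse in $G$, then since $P^g$ is undistorted any $(k,c)$-quasigeodesic of $(P^g,d_{P^g})$ with endpoints in $H\cap P^g$ is a quasigeodesic of $G$, so it stays $G$-close to $H\cap P^g$, and undistortion converts this into $d_{P^g}$-closeness; hence $(3)\Rightarrow(2)$. For $(1)\Rightarrow(3)$ I would invoke that the class of Morse ($=$ strongly quasiconvex) subgroups of a finitely generated group is closed under finite intersections \cite{T17,G17}: $H$ is Morse by hypothesis and $P^g$ is Morse, so $H\cap P^g$ is Morse in $G$ (and trivially so when the intersection is finite).

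The heart of the argument is $(3)\Rightarrow(1)$. Because $H$ is undistorted and finitely generated it is relatively quasiconvex, so by Hruska's theory its infinite peripheral intersections $H\cap P^g$ fall into finitely many $H$-conjugacy classes; since conjugation by an element of $G$ is an isometry of $\Gamma$, the Morse gauge of $H\cap P^g$ in $G$ depends only on the class, and with hypothesis $(3)$ this yields a single function $M_1(k,c)$ bounding all of these gauges at once. Now let $\gamma$ be a $(k,c)$-quasigeodesic of $\Gamma$ with endpoints $x,y\in H$. By stability of quasigeodesics relative to peripheral cosets (see, e.g., \cite{DS05}), $\gamma$ fellow-travels a geodesic $[x,y]$ up to deep excursions into peripheral cosets, and by the structure of geodesics with endpoints on a relatively quasiconvex subgroup every point of $[x,y]$, hence of $\gamma$, lies within $M_0$ of $H$ or within $M_0$ of some peripheral coset $gP$ with $H\cap gPg^{-1}$ infinite. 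Write $\gamma$ as a concatenation of maximal subpaths: those contained in $N_{M_0}(H)$, for which there is nothing to check, and "peripheral excursions" $\gamma_j$ contained in $N_{M_0}(g_jP_j)$ with $H\cap g_jP_jg_j^{-1}$ infinite, whose endpoints lie within a bounded distance of both $g_jP_j$ and $H$. By the coarse-intersection behaviour of a relatively quasiconvex subgroup with a peripheral coset, after translating $\gamma$ by a suitable element of $H$ we may assume these endpoints lie within a uniform bound of $H\cap g_jP_jg_j^{-1}$; prepending and appending uniformly short segments turns $\gamma_j$ into a $(k',c')$-quasigeodesic with endpoints on $H\cap g_jP_jg_j^{-1}$, which is Morse in $G$ with gauge $M_1$, so $\gamma_j\subseteq N(H\cap g_jP_jg_j^{-1})\subseteq N(H)$ with a radius depending only on $(k,c)$. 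Combining the two types of subpaths gives $\gamma\subseteq N_M(H)$ with $M=M(k,c)$, so $H$ is Morse in $G$.

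The main obstacle is precisely $(3)\Rightarrow(1)$: one must tame the peripheral excursions of an arbitrary $(k,c)$-quasigeodesic with endpoints on $H$, which draws on the structure theory of relatively quasiconvex subgroups (finiteness of the conjugacy classes of infinite peripheral intersections, and the coarse intersection of $H$ with a peripheral coset being uniformly close to a coset of the corresponding intersection) and on the stability of quasigeodesics in relatively hyperbolic groups, together with the bookkeeping needed to verify that every constant produced depends only on $(k,c)$ and the fixed data of $(G,\mathbb{P})$ and $H$, never on the particular $\gamma$.
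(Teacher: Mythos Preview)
The paper does not prove this statement: it is quoted verbatim as Theorem~1.9 of \cite{T17} and used as a black box in Section~\ref{4} (together with its companion Corollary~\ref{T172}) to derive Corollary~\ref{corollary} and the algorithms of Theorem~\ref{main3}. There is therefore no proof in the paper to compare your proposal against.

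That said, your outline is a faithful reconstruction of the standard argument. The cycle $(1)\Rightarrow(3)\Leftrightarrow(2)$ and $(3)\Rightarrow(1)$ is the natural organization, and you have correctly identified $(3)\Rightarrow(1)$ as the substantive implication, resting on Hruska's finiteness of $H$-conjugacy classes of infinite peripheral intersections together with the Dru\c{t}u--Sapir description of quasigeodesics in relatively hyperbolic groups. One point that deserves a sentence more of justification: when you assert that a geodesic $[x,y]$ with $x,y\in H$ has deep peripheral excursions only into cosets $gP$ with $H\cap gPg^{-1}$ infinite, you are implicitly using that for a relatively quasiconvex $H$ the coarse intersection $N_R(H)\cap N_R(gP)$ has bounded diameter whenever $H\cap gPg^{-1}$ is finite (so the entry and exit points of any such excursion are uniformly close and the excursion is short). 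This is true and standard, and your phrase ``coarse-intersection behaviour'' gestures at it, but it is the hinge of the argument and should be stated explicitly. A second minor point: citing \cite{T17} itself for closure of Morse subgroups under intersection in your $(1)\Rightarrow(3)$ step is not circular, since that closure is proved independently of the theorem at hand, but you might prefer to cite \cite{G17} alone or give the short direct argument.
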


\begin{corollary}[Corollary 1.10 in \cite{T17}]\label{T172}
Let $(G, \mathbb{P})$ be a finitely generated relatively hyperbolic group and let $H$ be an undistorted finitely generated subgroup of $G$. Then the following are equivalent:
\begin{enumerate}[{(1)}]
\item The subgroup $H$ is stable in $G$.
\item The subgroup $H \cap P^g$ is stable in $P^g$ for each conjugate $P^g$ of a peripheral subgroup in $\mathbb{P}$.
\item The subgroup $H \cap P^g$ is stable in $G$ for each conjugate $P^g$ of a peripheral subgroup in $\mathbb{P}$.
\end{enumerate}
\end{corollary}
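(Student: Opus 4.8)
The plan is to bootstrap the Morse statement of Theorem \ref{T17} up to the stability statement, using Tran's characterization (recalled in the introduction) that a finitely generated subgroup of a finitely generated group is stable if and only if it is word-hyperbolic and Morse. Since ``stable'' is in this way the conjunction of two conditions --- being word-hyperbolic and being Morse --- and Theorem \ref{T17} already controls how Morseness of $H$ in $G$ passes to the intersections $H\cap P^g$, the only genuinely new ingredient needed is a description of when $H$, respectively each $H\cap P^g$, is word-hyperbolic, together with a check that these conditions match up.

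For this I would use the structural fact (Hruska \cite{H13}) that a relatively quasiconvex subgroup $H$ of $(G,\mathbb P)$ is itself hyperbolic relative to the family $\mathbb P_H$ consisting of one representative of each $H$-conjugacy class of infinite subgroups of the form $H\cap P^g$ ($P\in\mathbb P$, $g\in G$); this family is finite, and each $H\cap P^g$ is undistorted in $H$, in $P^g$, and in $G$ (recall peripheral subgroups are Morse, hence undistorted, by \cite{DS05}). Since $H$ is undistorted and finitely generated it is relatively quasiconvex \cite{H13}, so this applies; moreover, using the standard fact that peripheral subgroups of a finitely generated relatively hyperbolic group are finitely generated, each infinite $H\cap P^g$ is finitely generated, so that the phrase ``$H\cap P^g$ is stable in $P^g$'' is meaningful. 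Combining the above with two standard facts --- an undistorted finitely generated subgroup of a word-hyperbolic group is word-hyperbolic, and a group hyperbolic relative to a finite family of word-hyperbolic subgroups is word-hyperbolic (see, e.g., \cite{O06}) --- one gets the dictionary one needs: $H$ is word-hyperbolic if and only if $H\cap P^g$ is word-hyperbolic for every conjugate $P^g$. (For conjugates with $H\cap P^g$ finite, in particular trivial, every condition below holds automatically, since finite subgroups are word-hyperbolic, Morse and stable, so only the finitely many infinite intersections carry content.)

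Given all this, $(1)\Leftrightarrow(2)$ is a chain of equivalences: $H$ stable in $G$ $\iff$ $H$ word-hyperbolic and Morse in $G$ (Tran) $\iff$ $H$ word-hyperbolic and $H\cap P^g$ Morse in $P^g$ for every conjugate $P^g$ (Theorem \ref{T17}, $(1)\Leftrightarrow(2)$) $\iff$ $H\cap P^g$ word-hyperbolic and Morse in $P^g$ for every conjugate $P^g$ (the dictionary above, valid because $H$ remains undistorted and finitely generated, hence relatively quasiconvex, at each stage) $\iff$ $H\cap P^g$ stable in $P^g$ for every conjugate $P^g$ (Tran, applied to each finitely generated subgroup $H\cap P^g\leq P^g$). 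The equivalence $(1)\Leftrightarrow(3)$ is obtained identically, replacing $(1)\Leftrightarrow(2)$ of Theorem \ref{T17} by $(1)\Leftrightarrow(3)$ and Tran applied to $H\cap P^g\leq P^g$ by Tran applied to $H\cap P^g\leq G$; alternatively, one gets $(2)\Leftrightarrow(3)$ by applying Theorem \ref{T17} to the undistorted finitely generated subgroup $H\cap P^g$ itself, noting that distinct conjugates of peripheral subgroups meet in finite subgroups, so that the only infinite intersection of $H\cap P^g$ with a conjugate of a peripheral subgroup is $H\cap P^g$.

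I expect the word-hyperbolicity dictionary of the second paragraph to be the main obstacle: one must quote Hruska's structure theorem with exactly the right hypotheses (finite generation and relative quasiconvexity of $H$, genuine undistortion of $H\cap P^g$ inside $P^g$ and inside $H$, finiteness of $\mathbb P_H$), and one must invoke a combination-type result to pass from relative hyperbolicity over word-hyperbolic peripherals to word-hyperbolicity. The remaining bookkeeping --- verifying that $H$ and its intersections meet the hypotheses of Theorem \ref{T17}, that finite or trivial intersections are harmless, and that stability is precisely ``finitely generated $+$ word-hyperbolic $+$ Morse'' --- is routine.
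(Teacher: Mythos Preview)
The paper does not actually prove Corollary~\ref{T172}: it is quoted verbatim as Corollary~1.10 of Tran \cite{T17} and used as a black box, so there is no ``paper's own proof'' to compare against. Your sketch is therefore not a reconstruction of anything in this paper but an independent derivation from Theorem~\ref{T17}.

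That said, your argument is sound and is essentially how one would deduce the stable version from the Morse version. The decomposition ``stable $=$ word-hyperbolic $+$ Morse'' (Tran) reduces the problem to showing that $H$ is word-hyperbolic if and only if every $H\cap P^g$ is, and your use of Hruska's induced peripheral structure $(H,\mathbb P_H)$ together with the fact that a group relatively hyperbolic over word-hyperbolic peripherals is itself word-hyperbolic is the right tool for this. The auxiliary checks --- that each infinite $H\cap P^g$ is finitely generated (as a peripheral subgroup of the finitely generated relatively hyperbolic group $H$), that it is undistorted in $P^g$ (since it is undistorted in $G$ and $P^g$ is undistorted in $G$), and that finite intersections are harmless --- are all correctly identified. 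Your alternative route to $(2)\Leftrightarrow(3)$ via almost malnormality of the peripheral family is also valid. One small quibble: when you write ``an undistorted finitely generated subgroup of a word-hyperbolic group is word-hyperbolic,'' you are implicitly using that undistorted implies quasiconvex in the hyperbolic setting; this is standard but worth stating, since it is exactly the step that fails outside hyperbolic groups.
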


We now concentrate on a particular type of a relatively hyperbolic group, namely a toral relatively hyerbolic group.
A relatively hyperbolic group $(G, \mathbb{P})$ is called a \textit{toral} if $G$ is torsion-free and the elements of $\mathbb{P}$ are finitely generated free abelian non-cyclic subgroups of $G$.


\begin{lemma}\label{undistorted}
Let $(G,\mathbb{P})$ be a relatively hyperbolic group
where every $P\in \mathbb{P}$ is finitely generated abelian. Then every relatively quasiconvex subgroup of $G$ is undistorted.
\end{lemma}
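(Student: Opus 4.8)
The plan is to show that any relatively quasiconvex subgroup $H$ of $G$ is undistorted by combining two facts: (a) relatively quasiconvex subgroups are themselves relatively hyperbolic, with peripheral structure coming from the infinite intersections $H \cap P^g$; and (b) in the toral setting those intersections are finitely generated abelian, hence undistorted in the corresponding peripheral $P^g$, which in turn is undistorted in $G$. So first I would invoke the structure theorem for relatively quasiconvex subgroups (Hruska, \cite{H13}; originally Osin): if $H$ is relatively quasiconvex in $(G,\mathbb{P})$, then $H$ is hyperbolic relative to a finite collection $\mathbb{Q}$ of representatives of the $H$-conjugacy classes of the infinite subgroups of the form $H \cap P^g$, and moreover $H$ is undistorted in $G$ \emph{relative to these peripherals}, i.e.\ the inclusion of the relative Cayley graph of $(H,\mathbb{Q})$ into the relative Cayley graph of $(G,\mathbb{P})$ is a quasi-isometric embedding. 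This last statement is part of the standard package (it is essentially the definition of relative quasiconvexity in Osin's formulation, and is recorded in \cite{H13}).

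Next I would reduce distortion in the word metrics to distortion in the relative metrics plus distortion of the peripherals. The key elementary observation is a distortion-comparison lemma: for a relatively hyperbolic pair $(G,\mathbb{P})$, the word length $|g|_G$ of an element and its ``relative length'' $|g|_{G,\mathbb{P}}$ (length in $\Gamma = \mathrm{Cayley}(G, S\cup\mathcal P)$) satisfy that $|g|_G$ is bounded above by a function of $|g|_{G,\mathbb P}$ together with the word lengths in $G$ of the maximal peripheral syllables appearing along a relative geodesic for $g$. Since each $P \in \mathbb{P}$ is finitely generated free abelian, and any finitely generated abelian subgroup of a relatively hyperbolic group with abelian peripherals is undistorted in $G$ (each $P\in\mathbb P$ is undistorted in $G$ by \cite{DS05}, as already noted in the excerpt, and $\mathbb Z^n$ has no distorted subgroups), the contribution of those syllables is controlled. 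Running this through for $H$: given $h \in H$, take a relative geodesic for $h$ in the relative Cayley graph of $(H,\mathbb Q)$; its length is $\le \lambda|h|_{G,\mathbb P}+c$ by relative quasiconvexity; each peripheral syllable of that path lies in some $H \cap P^g$, which is f.g.\ abelian, hence undistorted in $P^g$, hence (composing) undistorted in $G$; and $|h|_{G,\mathbb P}\le |h|_G$. Chaining the inequalities gives $|h|_H \le C|h|_G + C$, which is exactly undistortedness, the reverse inequality being automatic.

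The step I expect to be the main obstacle is making the distortion-comparison lemma precise and clean, because one has to handle the bookkeeping of several nested metrics at once: the word metric on $G$, the relative metric on $(G,\mathbb P)$, the word metric on each $P^g$ (and on each $H\cap P^g$), and the relative metric on $(H,\mathbb Q)$. In particular one must be careful that the quasi-isometry constants for ``$H\cap P^g$ undistorted in $G$'' can be taken uniform over the finitely many $H$-conjugacy classes in $\mathbb Q$ (this is fine since $\mathbb Q$ is finite and each $P\in\mathbb P$ is uniformly undistorted by a fixed constant) and that the same constants work as $g$ ranges over all conjugators (true because conjugation by $g$ in $G$ distorts word length only by an additive error $2|g|_G$, which is absorbed since we only care about the intersection subgroup, not the conjugating element). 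Once the lemma is set up, the proof is just a composition of three quasi-isometric embeddings, so I would state the lemma, cite \cite{H13, DS05} for the inputs, and assemble the inequality. Alternatively — and this may be the cleanest route — one can quote directly the theorem of Osin/Hruska that a relatively quasiconvex subgroup of a relatively hyperbolic group is \emph{undistorted} as soon as all of its infinite peripheral intersections are undistorted in $G$, and then observe that in the toral (more generally abelian-peripheral) case the intersections $H\cap P^g$, being finitely generated abelian, are automatically undistorted in the abelian group $P^g$ and hence in $G$; this reduces the whole lemma to a one-line application of known results.
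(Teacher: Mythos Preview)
Your proposal is correct, and in fact the paper's proof is exactly your final ``alternative'' route: it simply cites Theorem~1.4 of \cite{H13}, which says that the distortion of a relatively quasiconvex subgroup $H$ in $G$ is controlled by the distortions of the infinite intersections $H^g\cap P$ in $P$, and then observes that every subgroup of a finitely generated abelian group is undistorted. Your longer first approach---tracking relative geodesics and peripheral syllables through four nested metrics---is essentially a sketch of how one would re-derive Hruska's theorem from scratch; it is sound, but since the black-box result is already available and cited elsewhere in the paper, the one-line application is the appropriate choice here.
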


\begin{proof}
Let $H$ be a relatively quasiconvex subgroup of $G$.  It is known that the distortion of $H$ in $G$ is a combination of the distortions of the infinite subgroups $H^{g} \cap P$ of $P\in \mathbb{P}$, see Theorem 1.4 in \cite{H13}. Since a peripheral subgroup  $P\in \mathbb{P}$ is finite generated abelian, the distortion of any infinite subgroup of $P$ is linear. Hence, the distortion of $H$ in $G$ is linear, that is, $H$ is undistorted in $G$.
\end{proof}

\subsection{}
For the remainder of this section, except for Proposition \ref{loxodromicelement} and Corollary \ref{main5}, we assume that $(G,\mathbb{P})$ is a toral relatively hyperbolic group where the finite family $\mathbb{P}$ of free abelian non-cyclic groups is nonempty. Also note that in the case where $\mathbb{P}$ is empty, $G$ is torsion-free word hyperbolic. In this case, for a finitely generated subgroup $H$ of $G$, being Morse is equivalent to being stable, it is equivalent to being quasiconvex, and it is also equivalent to being undistorted. The conclusions of Theorem \ref{main3} then follow from Proposition \ref{K96}.

Note that for a finitely generated free abelian non-cyclic group $P$, the only stable subgroup of $P$ is trivial, and a Morse subgroup is either trivial or has finite index in $P$. Hence, Theorem \ref{T17} and Corollary \ref{T172} imply the following:

\begin{corollary}\label{corollary}
Let $(G, \mathbb{P})$ be a toral relatively hyperbolic group and let be $H$ an undistorted finitely generated subgroup of $G$.
\begin{enumerate}[{(i)}]
\item The subgroup $H$ is stable in $G$ if and only if $H \cap P^g$ is trivial for each conjugate $P^g$ of a peripheral subgroup in $\mathbb{P}$.
\item The subgroup $H$ is Morse in $G$ if and only if $H \cap P^g$ either is trivial or has finite index in $P^g$ for each conjugate $P^g$ of a peripheral subgroup in $\mathbb{P}$.
 
\end{enumerate}
\end{corollary}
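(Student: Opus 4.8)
The plan is to deduce this directly from the characterizations of stability and Morseness for undistorted finitely generated subgroups of relatively hyperbolic groups, namely Corollary \ref{T172} and Theorem \ref{T17}, together with an elementary classification of stable and Morse subgroups of a finitely generated free abelian non-cyclic group. Since $(G,\mathbb{P})$ is toral, every conjugate $P^g$ of a peripheral subgroup is isomorphic to a member of $\mathbb{P}$ and hence is of the form $\mathbb{Z}^n$ with $n\ge 2$, so it suffices to understand stable and Morse subgroups of such a $P$.

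First I would record the needed facts about $P\cong\mathbb{Z}^n$, $n\ge 2$. Any subgroup $Q\le P$ is free abelian of some rank $k\le n$. If $Q$ were stable in $P$, then $Q$ would be word-hyperbolic, forcing $k\le 1$; and if $k=1$, then $Q\cong\mathbb{Z}$ has infinite index in $\mathbb{Z}^n$ and fails to be Morse: joining two far-apart points $0$ and $q_0\in Q$ with $|q_0|=N$ by the path through $Ne$ and $Ne+q_0$, where $e$ is a fixed integer vector not in $Q\otimes\mathbb{R}$, produces a quasigeodesic with constants independent of $N$ whose midpoint $Ne$ lies at distance $\asymp N$ from $Q$. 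Since stability implies Morseness, this rules out $Q\cong\mathbb{Z}$ as well, so the only stable subgroup of $P$ is the trivial one (which is visibly stable). The identical wandering-quasigeodesic argument shows that a nontrivial infinite-index subgroup of $P$ is never Morse, whereas the trivial subgroup and every finite index subgroup of $P$ are Morse; hence a Morse subgroup of $P$ is either trivial or of finite index.

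Now for part (i): as $H$ is undistorted and finitely generated, Corollary \ref{T172} gives that $H$ is stable in $G$ if and only if $H\cap P^g$ is stable in $P^g$ for every conjugate $P^g$ of a peripheral subgroup; by the previous paragraph this happens exactly when each $H\cap P^g$ is trivial, which is (i). For part (ii), the same argument with Theorem \ref{T17} replacing Corollary \ref{T172} shows that $H$ is Morse in $G$ if and only if $H\cap P^g$ is Morse in $P^g$ for every conjugate $P^g$, and, again by the previous paragraph, this holds exactly when each $H\cap P^g$ is either trivial or of finite index in $P^g$, which is (ii). (Note that every subgroup of $P^g$ is automatically finitely generated and undistorted, so no extra hypotheses are needed to invoke the above results.)

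There is essentially no serious obstacle here; the only point requiring a small argument is the elementary assertion that a nontrivial infinite-index subgroup of $\mathbb{Z}^n$ is not Morse, handled by the explicit detour construction sketched above, and the bookkeeping of checking that the toral hypothesis makes every $P^g$ a copy of some $\mathbb{Z}^n$ with $n\ge 2$ so that ``$\mathbb{Z}\le P^g$ is not Morse'' actually applies.
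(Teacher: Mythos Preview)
Your proposal is correct and follows exactly the paper's approach: the paper states that for a finitely generated free abelian non-cyclic group $P$ the only stable subgroup is trivial and a Morse subgroup is either trivial or of finite index, and then invokes Theorem~\ref{T17} and Corollary~\ref{T172}. You simply supply the (standard) details of that elementary classification via the detour argument, which the paper leaves implicit.
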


Corollary \ref{corollary} says that we only need to check the intersection $H \cap P^g$ for each conjugate $P^g$ of peripheral subgroup in $\mathbb{P}$ to detect stability or Morseness of $H$ in $G$.
Kharlampovich, Miasnikov, and Weil \cite{KMW17} provided a partial algorithm for computing the intersection of two given relatively quasiconvex subgroups with ``peripherally finite index'' of a toral relatively hyperbolic group $(G,\mathbb{P})$.


\begin{definition}[Peripherally finite index]
A subgroup $H$ of a finitely generated relatively hyperbolic group $(G, \mathbb{P})$ has \textit{peripherally finite index} in $G$, if, for each peripheral subgroup $P\in \mathbb{P}$ and each $g \in G$, the subgroup $H^g \cap P$ is either finite or has finite index in $P$. 
\end{definition}

\begin{definition}
A subgroup $P$ of a group $G$ is called \textit{almost malnornal} if for every $g\in G\setminus P$ the intersection $P\cap P^g$ is finite.
A family $\{P_1, \dots , P_k\}$ of subgroups of $G$ is called almost malnormal if whenever $g\in G, P_i$, and $P_j$ are such that $P_i\cap P_j^g$ is infinite then $i=j$ and $g\in P_i$ (so that $P_i=P_j^g)$. 
\end{definition}

For a relatively hyperbolic group $(G,\mathbb{P})$ it is known that the family $\mathbb{P}$ is almost malnormal in $G$. 
Hence, every peripheral subgroup $P\in \mathbb{P}$ has peripherally finite index in $G$.

\begin{theorem}\label{KMW17-2}
 Let $(G, \mathbb{P})$ be relatively hyperbolic where every $P\in\mathbb{P}$ is finitely generated abelian. Let $H$ be a relatively quasiconvex subgroup of G. Then: 
 
 \begin{enumerate}[{(i)}]
\item\label{i}  Every infinite parabolic subgroup of $H$ is contained in a unique maximal parabolic subgroup of $H$.

\item \label{ii} There are only finitely many $H$-conjugacy classes of maximal infinite parabolic subgroups of $H$.
\end{enumerate}
\end{theorem}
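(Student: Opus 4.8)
\textbf{Proof proposal for Theorem \ref{KMW17-2}.}

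The plan is to exploit the almost malnormality of the peripheral family $\mathbb{P}$ in $G$ together with the fact that $H$ is relatively quasiconvex, hence itself relatively hyperbolic with respect to a suitable collection of parabolic subgroups. For part (\ref{i}), I would first recall the standard structure theory of relatively quasiconvex subgroups: if $H$ is relatively quasiconvex in $(G,\mathbb{P})$, then $(H, \mathbb{O})$ is relatively hyperbolic, where $\mathbb{O}$ is a set of representatives of the $H$-conjugacy classes of the infinite subgroups of the form $H \cap P^g$ with $P \in \mathbb{P}$, $g \in G$ (this is the induced peripheral structure on a relatively quasiconvex subgroup; see \cite{H13}). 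Then any infinite parabolic subgroup of $H$ is, by definition, infinite and conjugate in $H$ into one of these $H\cap P^g$; since $P$ is abelian, so is $H \cap P^g$, and the whole point is that $H \cap P^g$ is itself the unique maximal parabolic subgroup of $H$ containing any infinite subgroup of it. Uniqueness comes from almost malnormality: if an infinite subgroup $Q \le H$ were contained in both $H \cap P^{g_1}$ and (an $H$-conjugate of) $H \cap P^{g_2}$, then $P^{g_1} \cap P^{g_2}$ would be infinite, forcing $P^{g_1} = P^{g_2}$, and then the two maximal parabolics of $H$ coincide. So $H \cap P^g$ plays the role of the unique maximal parabolic.

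For part (\ref{ii}), the claim is that there are only finitely many $H$-conjugacy classes of maximal infinite parabolic subgroups of $H$, which is exactly the statement that the induced peripheral collection $\mathbb{O}$ above is finite. This is again part of the standard theory — a relatively quasiconvex subgroup inherits a relatively hyperbolic structure with \emph{finitely many} peripheral subgroups up to conjugacy — but I would give an argument: pick a finite relative generating set and use the quasiconvexity constant $k$ from Definition \ref{quasiconvex}. Any coset $gP$ with $H \cap P^g$ infinite must pass within bounded distance of $1$ in the Cayley graph $\mathrm{Cayley}(G, S \cup \mathcal{P})$, by a geodesic-tracking argument: two distinct elements of the infinite group $H \cap P^g$ are connected by a path through the vertex corresponding to the coset $gP$, and relative quasiconvexity of $H$ forces that vertex to lie in a bounded neighborhood of $H$. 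Combined with the local finiteness hypothesis in Bowditch's definition (each edge in finitely many circuits of bounded length), this bounds the number of relevant cosets $gP$ up to the $H$-action, hence bounds the number of $H$-conjugacy classes.

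The main obstacle I expect is making the geodesic-tracking / coset-confinement argument in part (\ref{ii}) fully rigorous in Bowditch's graph-theoretic framework rather than in the coned-off Cayley graph framework: one needs to pass between Bowditch's hyperbolic graph $K$ and the relative Cayley graph, and to correctly invoke the ``bounded coset penetration'' type property to conclude that only finitely many cosets are involved up to conjugacy. An alternative, cleaner route — and the one I would actually take — is simply to cite the known fact (e.g.\ Hruska \cite{H13} or Osin) that a relatively quasiconvex subgroup of a relatively hyperbolic group is itself relatively hyperbolic with respect to a finite family of parabolic subgroups, each of which is of the form $H \cap P^g$, and then derive (\ref{i}) and (\ref{ii}) as immediate consequences of almost malnormality of $\mathbb{P}$ and the finiteness of that induced family. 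Under that approach the proof becomes short, and the only thing to verify carefully is that ``maximal infinite parabolic subgroup of $H$'' coincides with ``member of the induced peripheral structure'' up to $H$-conjugacy, which follows because each $H \cap P^g$ is abelian and almost malnormality prevents two distinct such subgroups from having infinite intersection.
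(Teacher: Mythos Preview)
Your proposal is correct, and the ``cleaner route'' you end up endorsing is exactly what the paper does, only with a different citation. The paper gives no argument at all: it simply observes that Theorem~\ref{KMW17-2} follows from Proposition~7.19 of \cite{KMW17} (stated there for toral relatively hyperbolic groups, but whose proof goes through verbatim when the peripheral subgroups are merely finitely generated abelian), using that every $P\in\mathbb{P}$ is relatively quasiconvex. Your preferred approach---cite Hruska \cite{H13} for the fact that a relatively quasiconvex $H$ is relatively hyperbolic with respect to a \emph{finite} family of subgroups of the form $H\cap P^g$, and then read off (\ref{i}) and (\ref{ii}) from almost malnormality of $\mathbb{P}$---is the same structural fact invoked from a different source.

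What you add beyond the paper is an explicit unpacking of part (\ref{i}): the verification that each infinite $H\cap P^g$ is a maximal parabolic subgroup of $H$ and that almost malnormality forces uniqueness. That argument is correct (one small wording slip: in the uniqueness step there is no need to pass to an $H$-conjugate---if $Q$ sits in two maximal parabolics then both already have the form $H\cap P^{g_i}$, and $Q\le P^{g_1}\cap P^{g_2}$ infinite gives $P^{g_1}=P^{g_2}$ directly). Your first sketch for (\ref{ii}) via coset-tracking in the relative Cayley graph is the standard proof behind the cited results, so you are right to flag it as the place where one would otherwise have to do real work, and right to bypass it by citation.
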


Theorem \ref{KMW17-2} follows from Proposition 7.19 in \cite{KMW17} because every $P\in \mathbb{P}$ is relatively quasoconvex. 
Note that Proposition 7.19 in \cite{KMW17} assumes that $G$ is a toral relatively hyperbolic group but the proof also works for a relatively hyperbolic group where all peripheral subgroups are finitely generated ablelian.
Also note that in a relatively hyperbolic group $(G,\mathbb{P})$, every infinite parabolic subgroup $Q$ of $G$ is contained in a unique conjugate $P^g$ of some $P\in \mathbb{P}$.

\begin{definition}\label{definduced}
Let $(G, \mathbb{P})$ be a relatively hyperbolic group where all peripheral subgroups are finitely generated abelian.
Let $H$ be a relatively quasiconvex subgroup of $G$ and let $\mathbb{D}$ be a collection of representatives of $H$-conjugacy classes of maximal infinite parabolic subgroups of $H$. The collection $\mathbb{D}$ is called the \textit{induced peripheral structure} for $H$ from $(G,\mathbb{P})$.
\end{definition}

Note that if $(G, \mathbb{P}), (H,\mathbb{D})$ are as in Definition \ref{definduced}, and if $g\in G, P\in \mathbb{P}$ are such that $H\cap P^g$ is infinite then $H\cap P^g$ is conjugate in $H$ to some $D\in \mathbb{D}$. 
That is, $\mathbb{D}$ is the set of representatives of $H$-conjugacy classes of infinite subgroups of $H$ of the form $H\cap P^g$ where $P\in \mathbb{P}$ and $g\in G$.
Moreover, the collection $\mathbb{D}$ is finite and $(H,\mathbb{D})$ is relatively hyperbolic \cite{GM17}.
Note that the subgroup $H$ has peripherally finite index if and only if for every $D\in \mathbb{D}$, whenever $D\le P^g$, $P\in \mathbb{P}$ then $D$ has finite index in $P^g$.

\begin{remark}\label{induced}

Let $(G, \mathbb{P})$ be a relatively hyperbolic group where all peripheral subgroups are finitely generated abelian and let $H$ be a relatively quasiconvex subgroup of $G$ with induced peripheral structure $\mathbb{D}$. 
Groves and Manning \cite{GM17} gave a definition of relatively quasiconvexity of $H$ which is equivalent to Definition \ref{quasiconvex}, and their work implies that a peripheral structure $\mathbb{D}$ on $H$ compatible with $P$ is unique in the following sense (see Definition 2.9 and the following paragraph in \cite{GM17}).
Suppose that $\mathbb{D}'$ is a finite family of infinite subgroups $D'$ of $H$ such that each $D'$ is infinite parabolic in $G$ and such that $(H,\mathbb{D}')$ is relatively hyperbolic. Then there exists a bijective correspondence between families $\mathbb{D}$ and $\mathbb{D}'$ such that if $D$ is sent to $D'$ under this correspondence then for some $h\in H$ the subgroups $D^h=D'$.
 \end{remark}

\begin{theorem}\label{KMW17-3}
Let $(G,\mathbb{P})$  be a relatively hyperbolic group such that all $P\in \mathbb{P}$ are finitely generated abelian. 
Then there is a partial algorithm which, a given finite tuples generating subgroup $H$ of $G$,
\begin{itemize}
\item 
halts if and only if the subgroup $H$ is relatively quasiconvex with peripherally finite index;
\item when it halts, computes (by producing their finite generating sets) a family $\mathbb{D}$ as in Definition \ref{definduced}.
\end{itemize}

\end{theorem}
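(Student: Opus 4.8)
The plan is to assemble Theorem \ref{KMW17-3} from the relative Stallings graph machinery of Kharlampovich--Miasnikov--Weil \cite{KMW17} together with the structural facts recorded above (Theorem \ref{KMW17-2} and Remark \ref{induced}). Recall that \cite{KMW17} attach to a relatively quasiconvex subgroup of peripherally finite index a finite labelled graph, a \emph{relative Stallings graph}, which simultaneously solves the membership problem for $H$ and exhibits the finitely many $H$-conjugacy classes of maximal infinite parabolic subgroups of $H$ as the vertex groups at its ``cone vertices''. The input I would use is their partial folding/approximation procedure: on a finite generating tuple of $H$ it produces an ascending chain of finite approximating graphs, and this chain stabilizes --- the procedure halts --- if and only if $H$ is relatively quasiconvex with peripherally finite index, in which case its output is the relative Stallings graph $\Gamma_H$ of $H$. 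As with Theorem \ref{KMW17-2}, although the relevant statements in \cite{KMW17} (around Proposition 7.19) are phrased for toral relatively hyperbolic groups, their proofs use only that each $P\in\mathbb{P}$ is finitely generated abelian --- so that finite-index subgroups of the $P$ are effectively recognizable and the intersections $H\cap P^g$ are tame --- and hence go through verbatim in our setting.

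With this in hand the algorithm is short: run the \cite{KMW17} folding procedure on the given generating tuple of $H$; if and when it terminates with $\Gamma_H$, read off, from each cone vertex of $\Gamma_H$ carrying an infinite vertex group, a finite generating set of the corresponding subgroup $D\le H$, and output this finite list $\mathbb{D}=\{D_1,\dots,D_k\}$. Each $D_i$ equals some $H\cap P^{g}$ and is a maximal infinite parabolic subgroup of $H$; by Theorem \ref{KMW17-2} these represent exactly the $H$-conjugacy classes of maximal infinite parabolic subgroups of $H$, and by Remark \ref{induced} the only ambiguity is the one already allowed by Definition \ref{definduced}. So $\mathbb{D}$ is a legitimate induced peripheral structure, as required. (When $\mathbb{P}=\varnothing$ this degenerates to Kapovich's algorithm, Proposition \ref{K96}, with $\mathbb{D}=\varnothing$.)

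Finally I would check the halting equivalence explicitly. If $H$ is relatively quasiconvex with peripherally finite index, the procedure terminates by \cite{KMW17}. Conversely, if it terminates, the finite graph $\Gamma_H$ it outputs furnishes a relative quasiconvexity constant for $H$ (so $H$ is relatively quasiconvex), and finiteness of each cone-vertex group forces each infinite $H\cap P^g$ to have finite index in $P^g$ (so $H$ has peripherally finite index). If one prefers to lean only on a bare partial detector of relative quasiconvexity from \cite{KMW17}, one can instead, once that detector halts, compute $\mathbb{D}$ via \cite{KMW17}'s intersection algorithm, then for each $D\in\mathbb{D}$ search over $P\in\mathbb{P}$ and $g\in G$ for a containment $D\le P^g$ (this search terminates because $D$ is parabolic) and test whether $[P^g:D]<\infty$, which is decidable since $P^g$ is finitely generated abelian; halt only if all these indices are finite, otherwise loop forever --- the net behaviour is identical. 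I expect the genuinely delicate points to be the faithful transcription of the toral statements of \cite{KMW17} to the case of finitely generated abelian peripheral subgroups, and pinning down that their termination criterion is precisely ``relatively quasiconvex with peripherally finite index''; the remainder is bookkeeping with $\Gamma_H$.
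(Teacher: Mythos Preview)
Your proposal is correct and matches the paper's approach: the paper simply records that Theorem \ref{KMW17-3} follows from Proposition~7.20 of \cite{KMW17} (with Corollary~7.9 of \cite{KMW17} supplying generating sets for the $D\in\mathbb{D}$), together with the observation that the toral hypothesis there can be weakened to ``all $P\in\mathbb{P}$ finitely generated abelian''. Your write-up unpacks the Stallings-graph mechanism behind that proposition rather than citing it as a black box, but the content and the key observation about relaxing the peripheral hypothesis are the same.
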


Theorem \ref{KMW17-3} follows from Proposition 7.20 in \cite{KMW17} because every peripheral subgroup $P\in \mathbb{P}$ is relatively quasiconvex with peripherally finite index.
Note that Proposition 7.20 in \cite{KMW17} assumes that $G$ is a toral relatively hyperbolic group but their proof also works for a relatively hyperbolic group where all peripheral subgroups are finitely generated ablelian.
Moreover, Corollary 7.9 in \cite{KMW17} implies the following proposition that allows us, in particular, to find a finite generating set for an element of such $\mathbb{D}$ as in Theorem \ref{KMW17-3}.

\begin{proposition}\label{4.14}
Let $(G,\mathbb{P})$  be a relatively hyperbolic group such that all $P\in \mathbb{P}$ are finitely generated abelian. 
Then there is a partial algorithm that, given finite generating sets for relatively quasiconvex subgroups $H$, $K$ that have peripherally finite index in $G$, computes a finite generating set for $H\cap K$. 
\end{proposition}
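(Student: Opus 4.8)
The plan is to reduce the statement to the algorithmic machinery for relatively quasiconvex subgroups of \cite{KMW17}, using, exactly as for Theorem \ref{KMW17-2} and Theorem \ref{KMW17-3}, that the arguments there only require the peripheral subgroups to be finitely generated abelian rather than free abelian non-cyclic. First I would run the partial algorithm of Theorem \ref{KMW17-3} on the given finite generating sets for $H$ and for $K$; since by hypothesis both subgroups are relatively quasiconvex with peripherally finite index, both runs halt and output finite generating sets for the (finite, by Theorem \ref{KMW17-2}) induced peripheral structures $\mathbb{D}_H$ and $\mathbb{D}_K$ in the sense of Definition \ref{definduced}. I would also record, via Hruska's work \cite{H13}, that a finite intersection of relatively quasiconvex subgroups of $(G,\mathbb{P})$ is again relatively quasiconvex, so that $H\cap K$ is relatively quasiconvex in $G$ and the only remaining issue is to produce generators effectively.

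Next I would construct, following \cite{KMW17}, finite ``coned-off'' Stallings-type graphs (relative automata) $\mathcal{A}_H$ and $\mathcal{A}_K$ over the relative generating set $S\cup\mathcal{P}$ encoding $H$ and $K$, where the finitely many cone vertices of $\mathcal{A}_H$ carry the finitely generated abelian groups of $\mathbb{D}_H$ (and likewise for $K$), and then form the analogue of Stallings' fibre product $\mathcal{A}_H\times_G\mathcal{A}_K$, passing to the connected component of the product basepoint; this yields a relative automaton whose fundamental group is $H\cap K$. The crucial point is that this product automaton is again \emph{finite}: its non-cone part is a finite fibre product of finite graphs, and each cone vertex of the product carries (a $G$-conjugate of) a group of the form $(H\cap Q)\cap(K\cap Q)$, where $Q$ is a conjugate of some $P\in\mathbb{P}$; since $H$ and $K$ have peripherally finite index, each of $H\cap Q$ and $K\cap Q$ is finite or of finite index in $Q$, hence so is their intersection, and a finite generating set (indeed the full coset structure) of such an intersection of finite-index subgroups of the finitely generated abelian group $Q\cong\mathbb{Z}^{r}\oplus(\text{finite})$ is computable by elementary linear algebra over $\mathbb{Z}$. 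Matching a cone vertex of $\mathcal{A}_H$ with the correct $G$-conjugate of a cone vertex of $\mathcal{A}_K$ is controlled by the uniqueness of the induced peripheral structure recorded in Remark \ref{induced} (after \cite{GM17}), together with the solvability of the word problem in $G$.

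Finally I would read off a finite generating set from the finite relative automaton for $H\cap K$ in the standard way: fix a spanning tree of its underlying graph and take, as generators, the group elements read off the edges outside the tree (each closed up through the tree to the basepoint) together with the tree-path conjugates of the computed generators of the abelian groups sitting at the cone vertices; a routine van Kampen argument shows these generate $\pi_1$ of the automaton, i.e.\ $H\cap K$. I expect the real obstacle to be not any individual step but the bookkeeping needed to check that the termination and output guarantees of Theorem \ref{KMW17-3} and the fibre-product construction of \cite{KMW17} carry over verbatim once ``toral'' is weakened to ``finitely generated abelian peripheral subgroups''; this is precisely the content of Corollary 7.9 in \cite{KMW17}, which the above merely reorganizes.
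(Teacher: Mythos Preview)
Your proposal is correct and aligns with the paper's approach: the paper does not give a proof of Proposition~\ref{4.14} at all but simply states that it follows from Corollary~7.9 in \cite{KMW17} (with the same remark you make, that the toral hypothesis there can be relaxed to finitely generated abelian peripherals). Your write-up is just an expanded sketch of that cited argument, and you explicitly acknowledge this in your final sentence; so there is no substantive difference.
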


\begin{proof}[Proof of Theorem \ref{main3}(\ref{main3-1})]
Let $H$ be a finitely generated subgroup of a toral relatively hyperbolic group $(G, \mathbb{P})$ given by a finite generating set of $H$.
We run the partial algorithm in Theorem \ref{KMW17-3} on the subgroup $H$. 
Suppose that the partial algorithm terminates, determines that $H$ is relatively quasiconvex of peripherally finite index in $(G, \mathbb{P})$, and computes such a family $\mathbb{D}$ as in Definition \ref{definduced}. If the family $\mathbb{D}$ is empty then we declare that $H$ is stable in $G$.

Note that if the subgroup $H$ is stable in $G$, then $H$ is relatively quasiconvex and has peripherally finite index in $G$ by Corollary \ref{corollary}. 
Therefore, if the subgroup $H$ is stable, the partial algorithm in Theorem \ref{KMW17-3} for $H$ eventually terminates. 
Conversely, by Lemma \ref{undistorted} and Corollary \ref{corollary}, if this algorithm terminates then $H$ is stable in $G$.  Thus the above procedure does detect stability of $H$ in $G$ as, required.\end{proof}

\begin{proof}[Proof of  Theorem \ref{main3}(\ref{main3-2})]
Let $H$ be a undisorted subgroup of a toral relatively hyperbolic group $(G, \mathbb{P})$. 
We run the partial algorithm in Theorem \ref{main3}(\ref{main3-1}).
If the algorithm halts then we declare that $H$ is stable in $G$.
In parallel, we enumerate elements of $H$ in $G$, enumerate conjugates of elements of $P$ for each  $P\in \mathbb{P}$, and look for an infinite order element in some $H\cap P^g$. If we find an infinite order element, then we declare that $H$ is not stable.
By Corollary \ref{corollary}, this procedure decides whether or not $H$ is stable in $G$.
\end{proof}

\begin{proof}[Proof of Theorem \ref{main3}(\ref{main3-3})]
Let $H$ be a finitely generated subgroup of a toral relatively hyperbolic group $G$. 
We run the partial algorithm in Theorem \ref{KMW17-3} on $H$.
Suppose that the partial algorithm terminates, determines that $H$ is relatively quasiconvex and peripherally of finite index in $(G,\mathbb{P})$, and computes such a family $\mathbb{D}$ as in Definition \ref{definduced}. If the family $\mathbb{D}$ are empty we declare that $H$ is Morse.
Otherwise, for each infinite subgroup $U=H\cap P^g$ in the collection $\mathbb{D}$, compute the index of $U$ in the finitely generated free abelian group $P^g$.
If all such subgroups $U$ have finite index in the corresponding $P^g$, we declare that $H$ is Morse in $G$ and terminate the procedure. 

Note that if the subgroup $H$ is Morse in $G$, then $H$ is relatively quasiconvex, and has peripherally finite index by Corollary \ref{corollary}.  
Therefore, if $H$ is Morse, the partial algorithm in Theorem \ref{KMW17-3} for $H$ eventually terminates.
Conversely, by Lemma \ref{undistorted} and Corollary \ref{corollary}, if the above procedure terminates then $H$ is Morse in $G$. Thus, this partial algorithm detects Morseness of $H$ in $G$, as required.
\end{proof}

\subsection{}
To improve Theorem \ref{main3}(\ref{main3-3}) to a complete algorithm deciding whether or not an undistorted subgroup $H$ is Morse, that is,  to prove Theorem \ref{main3}(\ref{main3-4}), 
we need to be able to decide whether or not $H$ has peripherally finite index in $G$.
For solving this problem,  we use the algorithms given by Kharlampovich, Miasnikov, and Weil \cite{KMW17} combined with the Groves and Manning's result \cite{GM17} on relatively hyperbolic Dehn fillings. Before stating Groves and Manning's result on relatively hyperbolic Dehn fillings, we recall some definitions, see \cite{O07, GM08, GM17} for further details.

\begin{definition}[Dehn fillings]
Let $(G, \mathbb{P})$ be a relatively hyperbolic group. A \textit{Dehn filling} of $(G, \mathbb{P})$ is the quotient $G/\langle \langle \bigcup N_P \rangle \rangle$ determined by normal subgroups $N_P\trianglelefteq P\in \mathbb{P}$, together with the quotient map $\pi : G\to \bar{G}$.
We denote the quotient $G/\langle \langle \bigcup N_P\rangle \rangle$ by $\bar{G}$.
\end{definition}

\begin{definition}
Let $G$ be a relatively hyperbolic group and let $\pi : G \to \bar{G}$ be a Dehn filling of $G$ with $\bar{G}=G/\langle \langle \bigcup N_P\rangle \rangle$.
 For a finite subset $Z \subset \bigcup_{P\in \mathbb{P}} P\setminus \{1\}$, we say that a Dehn filling is \textit{$Z$-long} if $N_P\cap Z=\emptyset$ for all $P\in \mathbb{P}$. We say that a statement holds for \textit{all sufficiently long fillings} if there exists a finite set $Z$ such that the statement holds for all $Z$-long fillings.
\end{definition}

Osin \cite{O07} and Groves and Manning \cite{GM08} proved, independently, that for a relatively hyperbolic group $G$, there exists a finite set $Z=Z(G)$ such that any $Z$-long Dehn filling $\bar{G}$ with $N_P \cap Z=\emptyset$ for all $P\in \mathbb{P}$ is again a relatively hyperbolic group:

\begin{theorem}\cite{O07, GM08}\label{OGM}
Let $(G, \mathbb{P})$ be a relatively hyperbolic group. 
There exists a finite $Z \subset (\bigcup_{P\in \mathbb{P}} P)\setminus \{1\}$ such that for every $Z$-long filling $\pi: G \to \bar{G}=G/ \langle\langle \bigcup N_p\rangle\rangle$ we have
\begin{enumerate}
\item  for each $N_p\trianglelefteq  P$, the Dehn filling $\pi$ induces an embedding of $P/N_p$ in $\bar{G}$ whose image we identify with $P/N_p$,
\item $(\bar{G}, \{\,P/N_p\,|\, P\in \mathbb{P} \,\})$ is relatively hyperbolic,
\end{enumerate}

\end{theorem}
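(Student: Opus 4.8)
The plan is to prove Theorem \ref{OGM} via the cusped-space machinery of Groves and Manning \cite{GM08}, which pairs naturally with the Bowditch-style definition of relative hyperbolicity used above; Osin's relative-presentation approach \cite{O07} gives an alternative route. First I would set up the augmented (cusped) space of $(G,\mathbb{P})$: fix a finite generating set $S$ of $G$ and, for each left coset $gP$ of each $P\in\mathbb{P}$, attach a combinatorial horoball $\mathcal{H}(gP)$ along the corresponding $P$-coset inside $\mathrm{Cayley}(G,S)$. Since $G$ and the members of $\mathbb{P}$ are finitely generated, the resulting graph $X=X(G,\mathbb{P},S)$ is locally finite, hence automatically fine, carries a proper $G$-action with finitely many orbits of edges, and $(G,\mathbb{P})$ is relatively hyperbolic if and only if $X$ is $\delta$-hyperbolic for some $\delta$; this is the Groves--Manning characterization, equivalent to the definition used above. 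The goal then becomes: for every sufficiently long filling $\pi:G\to\bar G$, construct the analogous cusped space $\bar X=X(\bar G,\{P/N_P\},\pi(S))$ and show it is $\delta'$-hyperbolic with $\delta'=\delta'(\delta)$ independent of the particular filling; claim (1) is a separate and easier point.

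Next comes the comparison map. The quotient $\pi$ induces a $\pi$-equivariant simplicial map $\hat\pi:X\to\bar X$ that is the evident map on the Cayley-graph part and sends each horoball $\mathcal{H}(gP)$ onto the horoball over $\pi(g)(P/N_P)$ by collapsing the $N_P$-action. The decisive point is that this collapse only takes effect deep in the horoballs: if $N_P$ avoids the finite set $Z$ consisting of all nontrivial elements of the $P\in\mathbb{P}$ of $S$-length at most some threshold $C$, then every nontrivial element of $N_P$ has $S$-length exceeding $C$, and from the definition of a combinatorial horoball two distinct vertices of $\mathcal{H}(gP)$ identified by $\hat\pi$ must lie at depth $\gtrsim\log_2 C$. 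Consequently, choosing $C$ (equivalently $Z$) large in terms of a prescribed radius $R$, one arranges that $\hat\pi$ restricts to an isometry on every $R$-ball of $X$, and equivalently that every loop in $\bar X$ of length at most $R$ lifts to a loop of the same length in $X$. Making this quantitative, i.e. controlling how paths, bigons and van Kampen diagrams in $\bar X$ pull back across horoballs, is the technical heart of the theorem and the step I expect to be the main obstacle.

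Granting such local control, hyperbolicity of $\bar X$ follows by a local-to-global argument in the spirit of Lemma \ref{lem}: geodesic triangles of $\bar X$ of bounded size can be lifted into $X$, using that $\hat\pi$ is $1$-Lipschitz and a local isometry away from the deep horoball strata, are $\delta$-thin there, and hence project back to uniformly thin triangles of $\bar X$; local thinness of a space that is simply connected at the relevant scale then upgrades to genuine $\delta'$-hyperbolicity with $\delta'$ depending only on $\delta$. The same lifting bounds the number of circuits of length $n$ through any given edge of $\bar X$ by the corresponding number in $X$, so $\bar X$ is fine (in fact locally finite) and $\bar G$ acts on it properly with finitely many orbits of edges; by the Groves--Manning characterization $(\bar G,\{P/N_P\})$ is then relatively hyperbolic. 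Finally, for claim (1): were some $p\in P\setminus N_P$ to map to $1$ in $\bar G$, then $p\in\langle\langle\bigcup_Q N_Q\rangle\rangle$ would be witnessed by a van Kampen diagram over the relative presentation $\langle S,\mathbb{P}\mid\mathcal{R},\bigcup_Q N_Q\rangle$ whose boundary is a bounded-length word in $P$; the relative isoperimetric inequality for $(G,\mathbb{P})$ forces such a diagram to contain a $2$-cell labelled by a short nontrivial element of some $N_Q$, contradicting $Z$-longness once the threshold defining $Z$ is taken large enough. Hence $P/N_P$ embeds in $\bar G$ and $\{P/N_P\}$ is precisely its peripheral structure; the constants $Z$, $\delta'$ and the fineness data are all extracted uniformly from $(G,\mathbb{P},S)$ and $\delta$.
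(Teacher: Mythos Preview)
The paper does not supply a proof of Theorem~\ref{OGM}; it is quoted as a black box from \cite{O07,GM08} and used as input to the later arguments about benign Dehn fillings. So there is no ``paper's own proof'' to compare against. Your write-up is a reasonable high-level outline of the Groves--Manning argument from \cite{GM08}: build the cusped space $X$, show that a $Z$-long filling induces a collapse $\hat\pi:X\to\bar X$ that is an isometry on large balls, and then promote local thinness to global hyperbolicity of $\bar X$.

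Two comments on the sketch itself. First, the local-to-global step you invoke is more delicate than Lemma~\ref{lem}: that lemma upgrades local quasi-geodesics to global ones \emph{inside} a space already known to be hyperbolic, whereas here you must certify hyperbolicity of $\bar X$ from scratch. What \cite{GM08} actually uses is a Cartan--Hadamard--type criterion (Gromov's ``local hyperbolicity implies global hyperbolicity'' for simply connected spaces, in the form appearing e.g.\ in \cite[III.H]{BH99}), together with simple connectivity of $\bar X$ at the relevant scale; you should name this explicitly rather than citing Lemma~\ref{lem}. Second, your argument for part~(1) via a relative van Kampen diagram is closer in spirit to Osin's proof \cite{O07}; in the cusped-space framework the injectivity of $P/N_P\to\bar G$ falls out of the same ball-lifting statement, since a relation $p\in\langle\langle\bigcup N_Q\rangle\rangle$ with $p\in P\setminus N_P$ would give a short loop in $\bar X$ (inside a single horoball) that fails to lift. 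Either route is fine, but mixing the two frameworks in one paragraph obscures which estimate is doing the work.
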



For a relatively hyperbolic group $G$ and a relatively quasiconvex subgroup $H$ of $G$, Groves and Manning \cite{GM17} proposed the notion of $H$-wide fillings, and studied the behavior of $H$ under sufficiently long and $H$-wide fillings.

\begin{definition}\label{def1}
Let $P$ be a group, let $B$ be a subgroup of $P$, and let $S$ be a finite set. A normal subgroup $N$ of $P$ is $(B, S)$-\textit{wide} in $P$ if whenever  $b \in B$, $s \in S$ are such that $bs \in N$, then $s \in B$.
\end{definition}

\begin{definition}\label{def2}
Let $(G, \mathbb{P})$ be relatively hyperbolic and let $(H, \mathbb{D})$ be a relatively quasiconvex subgroup with the induced peripheral structure $\mathbb{D}$ as in Definition \ref{definduced}. Then for any $D\in \mathbb{D}$ there exists $P_D\in \mathbb{P}$ and $c_D\in G$ so that $D\le P_D^{c_D}$.
Let $S \subset (\bigcup_{P\in \mathbb{P}} P)\setminus \{1\}$. A filling $\pi : G\to \bar{G}$ of $G$ is \textit{$(H, S)$-wide} if for any $D\in \mathbb{D}$ the normal subgroup $N_{P_D}$ is $(D^{{c_D}^{-1}},\, S\cap P_D)$-wide in $P_D$.
\end{definition}

Note that is $S\subseteq S'$ and a filling is $(H,S')$-wide, then this filling is also $(H,S)$-wide.

\begin{definition}
Let $(G, \mathbb{P})$ be relatively hyperbolic and let $H$ be a relatively quasiconvex subgroup.
We say that a property holds for \textit{all sufficiently long and H-wide fillings} if there is a finite set $S \subset  (\bigcup_{P\in \mathbb{P}}P) \setminus \{1\}$ so that the property holds for any $(H,S)-$wide filling $G \to G/\langle \langle \bigcup N_P\rangle \rangle $ where $N_P \cap S = \emptyset$ for each $N_P\trianglelefteq P$.
\end{definition}


Groves and Manning \cite{GM17} proved the following properties on the images of $H$ under sufficiently long and $H$-wide fillings.

\begin{proposition}[Proposition 4.5 in \cite{GM17}]\label{GM17-1}
Let $(G, \mathbb{P})$ be relatively hyperbolic and let $H$ be a relatively quasiconvex subgroup of  $G$. Then all sufficiently long and $H$-wide fillings $\pi : G \rightarrow \bar{G}$ the subgroup $\pi(H)$ is relatively quasiconvex in $\bar{G}$.
\end{proposition}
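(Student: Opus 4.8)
The plan is to deduce Proposition \ref{GM17-1} from the relative Dehn filling machinery together with the local characterization of relative quasiconvexity. First I would recall the setup: $(G,\mathbb{P})$ is relatively hyperbolic, $H\le G$ is relatively quasiconvex with induced peripheral structure $\mathbb{D}$, and each $D\in\mathbb{D}$ sits inside a conjugate $P_D^{c_D}$ of a peripheral subgroup. Fixing a finite relative generating set $S_0$ for $(G,\mathbb{P})$ and the corresponding cusped Cayley graph (or cusped space), relative quasiconvexity of $H$ is equivalent to saying that $H$ acts cocompactly on a $k$-quasiconvex subset of this cusped space, or equivalently that the geodesics between points of $H$ in the relative Cayley graph stay $d$-close to $H$ for a proper left-invariant metric $d$. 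The goal is to show that after a sufficiently long and $H$-wide filling $\pi\colon G\to\bar G$, the image $\pi(H)$ has the analogous property in the filled group $\bar G$, which by Theorem \ref{OGM} is again relatively hyperbolic with peripheral structure $\{P/N_P\}$.

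The key steps, in order, are as follows. (1) Choose the finite set $Z=Z(G)$ from Theorem \ref{OGM} so that all $Z$-long fillings are relatively hyperbolic; enlarge it to a finite set $S$ so that, in addition, the geometric control one needs on $H$ survives — concretely, $S$ should contain a ball (in each $P$) large enough that the quasiconvexity constant $k$ of $H$ and the cocompactness data are detectable within distance controlled by $S$, and one imposes the $(H,S)$-wide condition of Definition \ref{def2} so that the intersections $D\cap N_{P_D}$ behave well (this is exactly the hypothesis that lets one lift geodesics in $\bar G$ back to $G$ without the peripheral pieces collapsing unexpectedly). (2) Take a geodesic $\bar\gamma$ in the relative Cayley graph (or cusped space) of $\bar G$ between two points of $\pi(H)$. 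Using the Dehn filling control on distances — that short paths in $\bar G$ lift to paths in $G$ that are close to geodesics away from the cusps, which is the standard ``no short fillings'' / quasi-isometry statement of Osin and Groves–Manning — produce a path $\gamma$ in $G$ between two points of $H$ (or a bounded distance from $H$) that projects to $\bar\gamma$ up to bounded error. (3) Apply relative quasiconvexity of $H$ in $G$: every vertex of $\gamma$ lies within $d$-distance $k$ of $H$. (4) Push this back down through $\pi$: since $\pi$ is $1$-Lipschitz for the appropriate word metrics and the filling is long and $H$-wide, every vertex of $\bar\gamma$ lies within a controlled distance of $\pi(H)$, where the $H$-wide hypothesis is what prevents a vertex of $\gamma$ that is close to $H\cap P_D^{c_D}$ from being mapped to something spuriously far from $\pi(H)$. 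This yields a relative quasiconvexity constant for $\pi(H)$ in $\bar G$ depending only on $k$ and the filling data, hence uniform over all sufficiently long and $H$-wide fillings.

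The main obstacle, and the heart of the argument, is step (2) together with the interaction with the peripheral pieces in step (4): one must control what happens to geodesics as they pass through the (now finite, or finite-index image of) peripheral cosets. A geodesic of $\bar G$ may take a ``shortcut'' through a coset of $P/N_P$ that, when lifted, corresponds to a long excursion in $G$, and one needs the length of the relevant filling kernel elements to be large (the $Z$-long / sufficiently long condition) so that such shortcuts cannot occur below a controlled scale, and one needs the $(H,S)$-wide condition to ensure that the part of $H$ meeting $P_D^{c_D}$ does not get identified with more of $P_D^{c_D}/N_{P_D}$ than it should. Managing these two finiteness conditions simultaneously — choosing one finite set $S\supseteq Z$ that makes both the relative hyperbolicity of $\bar G$ and the quasiconvexity estimate for $\pi(H)$ work — is the delicate bookkeeping. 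Once the lift-and-project scheme is in place with the right constants, the remaining verifications (that $\pi(H)$ is finitely generated, that the estimate is independent of the chosen proper metric, and uniformity over fillings) are routine consequences of the relatively hyperbolic Dehn filling theorems of \cite{O07, GM08} and the cusped-space description of relative quasiconvexity.
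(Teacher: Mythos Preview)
The paper does not give its own proof of this proposition: it is quoted verbatim as Proposition 4.5 of \cite{GM17} and used as a black box, so there is no argument in the paper to compare your sketch against. Your outline---work in the cusped space, lift geodesics in $\bar G$ to quasi-geodesics in $G$ using the length condition on the filling, apply quasiconvexity of $H$ upstairs, then push back down using $1$-Lipschitzness of $\pi$ and the $(H,S)$-wide condition to control what happens near the peripheral cosets---is a reasonable high-level description of the Groves--Manning approach, and identifies the genuine difficulty (step (2)/(4), the interaction with peripheral shortcuts) correctly. If you want to turn this into an actual proof you would need to invoke the precise lifting statements for geodesics in filled cusped spaces from \cite{GM08, GM17}; as written it is a plan rather than a proof, but since the paper itself only cites the result, that is all that is required here.
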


The following theorem is a special case of Proposition 6.2 in \cite{GM17} which shows that if all peripheral subgroups of $(G, \mathbb{P})$ are finitely generated free abelian and a subgroup $H$ is relatively quasiconvex in $G$, then we can find sufficiently long and $H$-wide fillings. Specifically, Theorem \ref{GM17} is obtained by applying Proposition 6.2 of \cite{GM17} to the family $\mathbb{H}=\{H\} \cup \mathbb{P}$, where $H$ is a relatively quasiconvex subgroup of $(G,\mathbb{P})$. Note that in this case, as follows from Definition \ref{def1} and Definition \ref{def2}, a filling $\pi$ of $(G,\mathbb{P})$ is $(H,S)$-wide if and only if $\pi$ is $(H',S)$-wide for every $H'\in \mathbb{H}$.

\begin{theorem}\label{GM17}
Suppose that $(G, \mathbb{P})$ is relatively hyperbolic, that each element of $\mathbb{P}$ is finitely generated free abelian, that $H$ is a relatively quasiconvex subgroup of $G$, and that $S\subset( \bigcup_{P\in \mathbb{P}} P)\setminus \{1\}$  is a finite set. Then there exist finite index subgroups $\{ K_p \trianglelefteq P \,|\, P\in \mathbb{P} \}$ so that for any subgroups $N_P\le K_P$, the filling

\begin{center}
 $G\to G/\langle\langle \bigcup N_P \rangle \rangle\rangle=\bar{G}$ 
\end{center}
\noindent
is ($H, S)$–-wide.
Moreover, for an element $b\in G$ and $P\in \mathbb{P}$, if $1\not\in  PHb$ then there is no element of $\langle\langle \bigcup N_P \rangle \rangle\rangle$ in $PHb$, that is, $1\not\in \pi(PHb)=\pi(P)\pi(H)\pi(b)$.
\end{theorem}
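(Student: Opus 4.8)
\textbf{Proof proposal for Theorem \ref{GM17}.}
The plan is to derive the statement directly from Proposition 6.2 of \cite{GM17} applied to the enlarged family $\mathbb{H}=\{H\}\cup\mathbb{P}$, as indicated in the paragraph preceding the theorem. First I would verify that $\mathbb{H}$ is an admissible input for Proposition 6.2: each member of $\mathbb{P}$ is relatively quasiconvex in $(G,\mathbb{P})$ (indeed Morse, as noted after Definition \ref{quasiconvex}), and $H$ is relatively quasiconvex by hypothesis, so every $H'\in\mathbb{H}$ is relatively quasiconvex. Next I would unwind the definition of $(H',S)$-wideness for $H'=P\in\mathbb{P}$: the induced peripheral structure on $P$ from $(G,\mathbb{P})$ is just $\{P\}$ itself (with $c_P=1$), so $(P,S)$-wideness of a filling $\pi$ is automatic whenever $N_P\cap S=\emptyset$, since the condition ``$bs\in N_P$ with $b\in P$ forces $s\in P$'' holds trivially for $s\in S\cap P\subseteq P$. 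Consequently a filling is $(H,S)$-wide in the sense of Definition \ref{def2} if and only if it is $(H',S)$-wide for every $H'\in\mathbb{H}$, which is the compatibility remark stated before the theorem.

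With that reduction in hand, Proposition 6.2 of \cite{GM17} provides, for the given finite set $S$, finite index subgroups $K_P\trianglelefteq P$ (one for each $P\in\mathbb{P}$) such that any filling with $N_P\le K_P$ is $(H',S)$-wide for all $H'\in\mathbb{H}$, hence $(H,S)$-wide; this is the first assertion. For the ``moreover'' clause, I would invoke the separability/avoidance part of Proposition 6.2 (the analogue of the ``$Z$-long'' avoidance statement, now strengthened to double cosets): after possibly shrinking the $K_P$ to a smaller finite-index subgroup, one arranges that the normal closure $\langle\langle\bigcup N_P\rangle\rangle$ meets no double coset $PHb$ that does not already contain $1$. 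Concretely, since $H$ is relatively quasiconvex and $P$ is a peripheral subgroup, the double coset $PHb$ (equivalently, the set $\pi^{-1}(\pi(P)\pi(H)\pi(b))$ lifted appropriately) is ``quasiconvex-like'' and its image behaves well under long $H$-wide fillings; one uses that for any $g\notin PHb$ there are only finitely many obstructions to separating $g$ from $PHb$ in the filling, and these are absorbed by taking $K_P$ small enough. Thus $1\notin PHb$ implies $1\notin\pi(PHb)=\pi(P)\pi(H)\pi(b)$.

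The main obstacle I expect is the ``moreover'' clause: translating the double-coset avoidance statement into the precise Dehn-filling language of \cite{GM17} and checking that it is genuinely contained in (or a direct corollary of) Proposition 6.2, rather than requiring an independent argument about separability of double cosets $PHb$ under fillings. In particular one must be careful that the avoidance is for the full normal closure $\langle\langle\bigcup N_P\rangle\rangle$ and not merely for each $N_P$ individually, which is where the relative quasiconvexity of $H$ (and the fact that fillings of relatively quasiconvex subgroups embed nicely, Proposition \ref{GM17-1}) does the real work. If Proposition 6.2 of \cite{GM17} is stated with exactly this double-coset conclusion, the proof is essentially a matter of specializing the family to $\mathbb{H}=\{H\}\cup\mathbb{P}$ and bookkeeping; if not, I would supplement it with the observation that $PHb$ is a union of finitely many $H$-cosets inside a relatively quasiconvex subset, so that avoiding $1$ is a finite condition that persists for all sufficiently long $H$-wide fillings.
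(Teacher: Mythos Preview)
Your approach is exactly the one the paper takes: the paper does not give a separate proof of Theorem \ref{GM17} but simply states in the preceding paragraph that it is obtained by specializing Proposition 6.2 of \cite{GM17} to the family $\mathbb{H}=\{H\}\cup\mathbb{P}$, together with the observation (which you correctly unwind) that $(H,S)$-wideness coincides with $(H',S)$-wideness for every $H'\in\mathbb{H}$. Your hedging about the ``moreover'' clause is unnecessary---the double-coset avoidance conclusion is part of the statement of Proposition 6.2 in \cite{GM17} (applied with the pair $P,H\in\mathbb{H}$ and the element $b$ included in the finite set of that proposition), so no supplementary separability argument is required; note also that your proposed backup, that $PHb$ is a union of finitely many $H$-cosets, is false in general when $P\cap H$ has infinite index in $P$, so it is fortunate that it is not needed.
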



\begin{definition}
Let $(G, \mathbb{P})$ be a relatively hyperbolic group where all peripheral subgroups are finitely generated abelian and let $(H, \mathbb{D})$ be a relatively quasiconvex subgroup of $G$ with the induced peripheral structure $\mathbb{D}$ as in Definition \ref{definduced}.
For every $D\in \mathbb{D}$ there exists $P_D\in \mathbb{P}$ and $c_D\in G$ so that $D\le P_D^{c_D}$.
For a Dehn filling $\pi :G\to \bar{G}=G/\langle \langle \bigcup N_P\rangle \rangle$ the \textit{induced filling kernels} for $(H, \mathbb{D})$ are the collection $\mathcal{D}=\{D\cap N_{P_D}^{c_D} \,| \, D\in \mathbb{D}\}$. These defines the \textit{induced filling} $\pi' : H\to \bar{H}=H/\langle\langle \mathcal{D}\rangle\rangle$ of $H$.
We denote $\bar{D} =\pi'(D)$ for $D\in \mathbb{D}$ and denote by $\bar{ \mathbb{D}}$  the list of all those $\bar{ D}$, where $D\in \mathbb{D}$, such that $\bar{D}$ is infinite.
\end{definition}

\begin{proposition}[Proposition 4.6 in \cite{GM17}]\label{GM17-2}
Let $(G, \mathbb{P})$ be relatively hyperbolic and let $H$ be a relatively quasiconvex subgroup of  $G$. For sufficiently long and $H$-wide fillings $\pi : G \to \bar{G}$ the map from the induced filling $\pi(H)$ of $H$ to $\bar{G}$ is injective.
\end{proposition}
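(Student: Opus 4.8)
The plan is to unwind the definitions: the claim is that the homomorphism $q\colon \bar H := H/\langle\langle\mathcal D\rangle\rangle_H \to \bar G = G/\langle\langle\bigcup_{P} N_P\rangle\rangle_G$ induced by the inclusion $H\hookrightarrow G$ is injective, equivalently that $H\cap \langle\langle\bigcup_P N_P\rangle\rangle_G = \langle\langle\mathcal D\rangle\rangle_H$. The inclusion $\supseteq$ is routine: each generator $D\cap N_{P_D}^{c_D}$ of $\mathcal D$ lies in $N_{P_D}^{c_D}\subseteq \langle\langle\bigcup_P N_P\rangle\rangle_G$, and since the latter is normal in $G$ it absorbs all of its $H$-conjugates, whence $\langle\langle\mathcal D\rangle\rangle_H\subseteq H\cap\langle\langle\bigcup_P N_P\rangle\rangle_G$. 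So the content is entirely the reverse inclusion.

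For that I would argue geometrically in the Groves--Manning cusped spaces $X=X(G,\mathbb P)$ and $\bar X = X(\bar G,\bar{\mathbb P})$ with $\bar{\mathbb P}=\{P/N_P\}$. First, choose the finite set $S$ (hence the fillings) large enough that the filling theorem (Theorem \ref{OGM}) applies: $\bar X$ is $\bar\delta$-hyperbolic with $\bar\delta$ independent of the filling, each $P/N_P$ embeds in $\bar G$, and a Greendlinger-type lemma holds for the relative presentation of $\bar G$ --- every word over the generators of $G$ representing $1$ in $\bar G$ but not in $G$ bounds a van Kampen diagram containing a $2$-cell labelled by a word in some conjugate $N_P^g$ that occupies a definite fraction of its boundary. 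Relative quasiconvexity of $H$ in $(G,\mathbb P)$ supplies an equivariant quasi-isometric embedding of $X(H,\mathbb D)$ into $X$, and Proposition \ref{GM17-1} supplies the analogue for $\pi(H)$ in $\bar X$. Now take $h\in H$ with $q(h)=1$, pick a word $w$ over the generators of $G$ representing $h$, and take a van Kampen diagram $\Delta$ witnessing $w=_{\bar G}1$ of minimal complexity. Applying the Greendlinger lemma produces a $2$-cell whose boundary arc reads a subword of $w$; since $w$ represents an element of the relatively quasiconvex subgroup $H$, that arc fellow-travels $H$, and then the $H$-wide hypothesis --- in the form of Theorem \ref{GM17}, which says $N_{P_D}$ is $(D^{{c_D}^{-1}},\,S\cap P_D)$-wide --- forces the corresponding element of $N_P^g$ to be conjugate \emph{within $H$} into $D\cap N_{P_D}^{c_D}$, i.e.\ into a conjugate of a defining generator of $\langle\langle\mathcal D\rangle\rangle_H$. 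Multiplying $h$ by that conjugate strictly reduces the complexity of $\Delta$, and one inducts; the base case $h=1$ is vacuous. This gives $h\in\langle\langle\mathcal D\rangle\rangle_H$ and hence the injectivity of $q$.

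The hardest part will be the step where quasiconvexity of $H$ is used to force the Greendlinger cell to be ``seen'' by $H$: one must show that a large $N_P$-cell whose boundary arc merely runs \emph{near} $H$ in fact lies over one of the peripheral cosets $P_D^{c_D}$ that are attached to $H$ (after the bookkeeping conjugation), rather than over some unrelated peripheral coset that $H$ only passes close to, so that Theorem \ref{GM17} can be applied with the correct $D$. Pinning this down requires simultaneously controlling the uniform hyperbolicity constant of $\bar X$, the quasiconvexity constants of $H$ in $X$ and of $\pi(H)$ in $\bar X$, and --- the delicate point --- ensuring that the surgery replacing the cell by a $\langle\langle\mathcal D\rangle\rangle_H$-relator genuinely decreases the chosen complexity measure rather than trading one bad cell for several. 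This interplay is presumably why the hypothesis is packaged as ``sufficiently long \emph{and} $H$-wide'': ``long'' provides the Greendlinger/hyperbolicity control, and ``$H$-wide'' provides the routing of the relation through $\mathcal D$.
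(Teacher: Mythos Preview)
The paper does not prove this proposition: it is quoted verbatim as Proposition~4.6 of \cite{GM17} and used as a black box, with no argument supplied in this paper. So there is no ``paper's own proof'' to compare against here; the comparison would have to be with Groves--Manning's original argument in \cite{GM17}.

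As a sketch of that argument, your proposal is in the right spirit but is not a proof. You have correctly identified that the content is the inclusion $H\cap\ker\pi\subseteq\langle\langle\mathcal D\rangle\rangle_H$, and that the two hypotheses play complementary roles (length for small-cancellation/hyperbolicity control, $H$-wideness for routing relators through the induced filling kernels). However, the step you flag as ``hardest'' is in fact the whole proof, and your outline does not carry it through. Specifically: a Greendlinger cell for $\bar G$ has boundary lying in a single \emph{$G$-parabolic coset} $gP$, not a priori one of the finitely many cosets $c_D P_D$ attached to $H$; quasiconvexity of $H$ only tells you the word $w$ stays close to $H$ in the \emph{relative} Cayley graph, which says nothing directly about which peripheral coset the cell sits over. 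Bridging this gap is exactly what requires the cusped-space machinery in \cite{GM17}: one works with a convex $H$-cocompact subspace of the cusped space, shows that for sufficiently long fillings the kernel acts freely on it, and uses $H$-wideness to identify the horoball stabilizers meeting this subspace with the induced filling kernels. Your van Kampen surgery picture, as written, also does not specify a complexity that provably decreases --- replacing a long $N_P$-cell by an $H$-relator can create new cells, and controlling this is nontrivial. So the proposal is a reasonable heuristic for why the statement should be true, but it is not close to a complete argument, and since the paper itself simply cites \cite{GM17}, that is where a full proof must be sought.
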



We say that two finite lists $\mathbb{A}= A_1, \dots, A_k$ and $\mathbb{B}= B_1,\dots, B_s$ of infinite subgroups of a group $W$ are \textit{the same up to conjugation} in $W$, if  $k=s$ and there exists a permutation $\sigma\in S_k$ such that for every $1\le i\le k$, $B_{\sigma(i)}=A_i^{w_i}$ for some $w_i\in W$.

\begin{proposition}\label{image}
Let $(G, \mathbb{P})$ be relatively hyperbolic and let $H$ be a relatively quasiconvex subgroup of $G$ with the induced peripheral structure $\mathbb{D}$ from $G$. For sufficiently long and $H$-wide fillings, the induced peripheral structure on $\pi(H)$ from $\bar{G}$ is the same as the peripheral structure $\bar {\mathbb{D}}$ on $\bar{H}$, up to conjugation in $\bar H=\pi(H)$.
\end{proposition}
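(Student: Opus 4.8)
The plan is to compare two peripheral structures on the group $\pi(H) = \bar{H}$: the first is $\bar{\mathbb{D}}$, the image under the induced filling $\pi'$ of the given structure $\mathbb{D}$ on $H$ (discarding finite images); the second is the induced peripheral structure on $\pi(H)$ as a relatively quasiconvex subgroup of $(\bar{G}, \{P/N_P\})$, which exists by Proposition~\ref{GM17-1} once the filling is sufficiently long and $H$-wide. The goal is to show these two lists agree up to conjugacy in $\bar{H}$. I would invoke the uniqueness principle recorded in Remark~\ref{induced}: if $\mathbb{D}'$ is any finite family of infinite subgroups of $\bar H$ each of which is infinite parabolic in $\bar G$ and such that $(\bar H, \mathbb{D}')$ is relatively hyperbolic, then $\mathbb{D}'$ coincides with the genuine induced peripheral structure up to conjugation in $\bar H$. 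So it suffices to verify that $\bar{\mathbb{D}}$ has these two properties.

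First I would check that each $\bar D \in \bar{\mathbb{D}}$ is infinite parabolic in $\bar G$. By construction $D \le P_D^{c_D}$, so $\pi(D) \le \pi(P_D^{c_D}) = (P_D/N_{P_D})^{\pi(c_D)}$, a conjugate of a peripheral subgroup of $\bar G$; and $\bar D$ is infinite by the very definition of $\bar{\mathbb{D}}$ (we keep only the infinite images). So each $\bar D$ is infinite parabolic in $\bar G$. Second, I would show $(\bar H, \bar{\mathbb{D}})$ is relatively hyperbolic: this is exactly the statement that the induced filling $\pi': H \to \bar H = H/\langle\langle \mathcal{D}\rangle\rangle$ of the relatively hyperbolic pair $(H,\mathbb{D})$, with filling kernels $\mathcal{D} = \{D \cap N_{P_D}^{c_D}\}$, is itself a Dehn filling in the sense of the relatively hyperbolic Dehn filling theorem. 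For this I need the kernels $D \cap N_{P_D}^{c_D}$ to avoid a suitable finite subset of $\bigcup_{D}D \setminus\{1\}$; since "sufficiently long" on the $G$-side is governed by a finite set $Z \subset \bigcup_P P \setminus \{1\}$, and long fillings on $G$ restrict to long fillings on $H$ (one pulls the relevant finite set inside each $D$ back through the inclusions $D \le P_D^{c_D}$), Theorem~\ref{OGM} applied to $(H,\mathbb{D})$ gives that $(\bar H, \{D/(D\cap N_{P_D}^{c_D})\}) = (\bar H, \bar{\mathbb{D}})$ is relatively hyperbolic, after discarding the finite quotients. The injectivity of $\pi'(H) \to \bar G$ from Proposition~\ref{GM17-2} ensures this abstract filling of $H$ is realized compatibly inside $\bar G$, so the parabolics of $\bar H$ land where claimed rather than getting collapsed or merged.

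Having both properties, Remark~\ref{induced} applies and yields a bijection between $\bar{\mathbb{D}}$ and the genuine induced peripheral structure of $\pi(H) \le \bar G$ under which corresponding subgroups are conjugate in $\bar H$; that is precisely the assertion. The only subtlety I want to be careful about is that distinct $D, D' \in \mathbb{D}$ could a priori have images $\bar D, \bar{D'}$ that become conjugate, or even equal, in $\bar H$, which would change the count $k=s$ in the definition of "same up to conjugation." The $H$-wideness hypothesis is exactly what rules this out: Definition~\ref{def2} forces $N_{P_D}$ to be $(D^{c_D^{-1}}, S\cap P_D)$-wide, and the "moreover" clause of Theorem~\ref{GM17} (no element of $\langle\langle \bigcup N_P\rangle\rangle$ lies in a coset $PHb$ unless $1$ already does) is the mechanism preventing the filling from identifying two genuinely distinct $H$-conjugacy classes of peripherals; combined with Theorem~\ref{KMW17-2}(ii) applied on the $\bar G$ side, the number of classes is preserved.

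The main obstacle I anticipate is bookkeeping the "sufficiently long and $H$-wide" quantifiers so that a \emph{single} finite set $S$ (hence one cofinite family of fillings) simultaneously makes: (a) $\pi(H)$ relatively quasiconvex (Proposition~\ref{GM17-1}), (b) $\pi'(H) \hookrightarrow \bar G$ injective (Proposition~\ref{GM17-2}), (c) the abstract filling of $H$ long enough for Theorem~\ref{OGM} to apply to $(H,\mathbb{D})$, and (d) no collapsing/merging of the finitely many $D$'s. Each is individually a cofinite condition on fillings, so their conjunction is again cofinite, but spelling out that the finite exceptional set pulls back correctly through the inclusions $D \le P_D^{c_D}$ — and that $S$-longness on $G$ implies the requisite longness for the induced filling of $H$ — is the technical heart of the argument. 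Everything else is an application of the cited structural results of Groves--Manning and the uniqueness of induced peripheral structures.
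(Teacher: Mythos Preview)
Your proposal is correct and follows essentially the same route as the paper: use Proposition~\ref{GM17-1} to get $\pi(H)$ relatively quasiconvex in $\bar G$, use Proposition~\ref{GM17-2} to identify the induced filling $\bar H$ with $\pi(H)$, and then invoke the uniqueness principle of Remark~\ref{induced} to match $\bar{\mathbb{D}}$ with the induced peripheral structure on $\pi(H)\le \bar G$. Your write-up is considerably more careful than the paper's three-sentence proof---in particular, you explicitly verify that $(\bar H,\bar{\mathbb{D}})$ is relatively hyperbolic by applying Theorem~\ref{OGM} to $(H,\mathbb{D})$, a step the paper leaves implicit---and your worry about distinct $D,D'$ collapsing to the same $\bar H$-conjugacy class is already handled by that verification (peripherals in a relatively hyperbolic pair are automatically non-conjugate), so the separate appeal to the ``moreover'' clause of Theorem~\ref{GM17} is unnecessary though harmless.
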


 \begin{proof}
By Theorem \ref{OGM} and Proposition \ref{GM17-1}, for a sufficiently long and $H$-wide filling $\pi :G\to \bar{G}$, the image $\pi(H)$ is relatively quasiconvex in the new relatively hyperbolic group $(\bar{G}, \{\pi(P)\,|\,P\in \mathbb{P}\})$. 
The induced peripheral structure on $\pi(H)$ from $\bar{G}$ consists of the infinite intersections $\pi(H)\cap{\pi(P)}^h$ where $h\in\bar{G}$.
Remark \ref{induced} and Proposition \ref{GM17-2} imply that if for $D\in \mathbb{D}$ the image 
$\pi(D)$ is infinite then $\pi(D)=\pi(H)\cap{\pi(P)}^h$ for some $h \in \bar{G}$.
Thus, the conclusion of Proposition \ref{image} holds as required.
 \end{proof}


\begin{proposition}\label{finite}
Let $(G,\mathbb{P})$ be a toral relatively hyperbolic group with $\mathbb{P}=\{P_1, \dots, P_k\}$.  Let $H$ be a relatively quasiconvex subgroup of $G$.
Let $a\in G$ be a fixed element of $G$. Then for all fillings $\pi :G \to \bar{G} $ that are sufficiently long and $H$-wide the following holds:

If  $\pi(H^a\cap P)$ is finite where $P\in \mathbb{P}$ then $ \pi(H)^{\pi(a)}\cap \pi(P)$ is also finite.
\end{proposition}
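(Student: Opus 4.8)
The plan is to contrapose: I want to show that if $\pi(H)^{\pi(a)} \cap \pi(P)$ is infinite, then $\pi(H^a \cap P)$ is infinite — or rather, since we want the forward implication, I will show that when $\pi(H^a \cap P)$ is finite then $\pi(H)^{\pi(a)} \cap \pi(P)$ cannot contain a nontrivial element of infinite order, and since $(\bar G, \{\pi(P)\})$ is relatively hyperbolic with torsion-free peripheral structure (being a quotient of a toral group by a long filling, via Theorem \ref{OGM}), $\pi(P)$ is free abelian, so ``infinite'' is the same as ``containing an element of infinite order.'' So it suffices to take $\bar h \in \pi(H)^{\pi(a)} \cap \pi(P)$ of infinite order and derive a contradiction with the finiteness of $\pi(H^a \cap P)$.

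The key step is to lift the equation defining $\bar h$. Write $\bar h = \pi(a)^{-1}\pi(h)\pi(a)$ for some $h \in H$, and also $\bar h = \pi(p)$ for some $p \in P$. Then $\pi(a^{-1}h a) = \pi(p)$, i.e. $\pi(a^{-1} h a p^{-1}) = 1$, so $a^{-1}h a p^{-1} \in \langle\langle \bigcup N_P \rangle\rangle = \ker \pi$. The element $a^{-1} h a p^{-1}$ lies in the double-coset-type set $a^{-1} H a \cdot P$ — more precisely, rearranging, $h \cdot (a p^{-1} a^{-1}) \in \ker\pi$ but it is cleaner to package this using the ``$1 \notin \pi(PHb)$'' clause of Theorem \ref{GM17}. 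Apply that clause with the relatively quasiconvex subgroup $H^a = a^{-1}Ha$ in place of $H$ (note $H^a$ is relatively quasiconvex since conjugates of relatively quasiconvex subgroups are relatively quasiconvex), with the relevant $P \in \mathbb{P}$, and with a suitable $b$: the statement says that for sufficiently long and $H^a$-wide fillings, if $1 \notin P H^a b$ then $1 \notin \pi(P H^a b)$. The contrapositive gives: if $1 \in \pi(P H^a b)$ then $1 \in P H^a b$, i.e. there is an honest element of $H^a \cap P$ realizing the relation, and iterating on powers $\bar h^n$ of $\bar h$ — which also lie in $\pi(H^a) \cap \pi(P)$ and are pairwise distinct since $\bar h$ has infinite order — produces infinitely many elements of $H^a \cap P = H^a \cap P$ whose images $\bar h^n$ are pairwise distinct, so $\pi(H^a \cap P)$ is infinite, the desired contradiction. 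I should also arrange the filling to be simultaneously sufficiently long and $H$-wide \emph{and} $H^a$-wide; since ``$H^a$-wide'' only adds finitely many wideness constraints of the form in Definition \ref{def2} (applied to the induced peripheral structure of $H^a$, which is just a conjugate of that of $H$), and since intersecting finitely many finite-index subgroups $K_P \trianglelefteq P$ from Theorem \ref{GM17} keeps them finite index, such fillings still form a cofinal family, so the phrase ``all sufficiently long and $H$-wide fillings'' in the conclusion can absorb the extra $H^a$-wideness requirement (one may also simply enlarge $S$).

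The main obstacle I anticipate is bookkeeping with the wideness and conjugation: making sure that the ``$H^a$-wide'' hypothesis needed to invoke the last clause of Theorem \ref{GM17} is genuinely implied by ``sufficiently long and $H$-wide'' for a cofinal family of fillings, and correctly translating between $H^a \cap P$ and the double coset $P H^a b$ so that the non-vanishing clause applies verbatim. A secondary technical point is confirming that $\pi(P)$ is torsion-free (so that an infinite subgroup must contain an infinite-order element): this follows because $\pi$ restricted to $P$ has image $P/N_P$ which, for a long filling by Theorem \ref{OGM}(1), embeds in $\bar G$, and $P/N_P$ is a quotient of a free abelian group — here I will choose the filling so that each $N_P$ has finite index in $P$, making $P/N_P$ finite; but if one wants general $N_P$, one argues instead directly that distinct powers $\bar h^n$ are distinct because $\bar h$ has infinite order, which is automatic and needs no torsion-freeness of $\pi(P)$. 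Once these points are pinned down the argument is a short diagram chase.
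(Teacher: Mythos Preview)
Your approach has a genuine gap at the crucial step. You write that the contrapositive of the double-coset clause of Theorem~\ref{GM17} ``gives: if $1 \in \pi(PH^ab)$ then $1 \in PH^ab$, i.e.\ there is an honest element of $H^a \cap P$ realizing the relation.'' But this last inference is unwarranted. Rewritten, the clause says: for each $c\in G$, if $\pi(c)\in\pi(P)\pi(H^a)$ then $c\in PH^a$. Now take any preimage $c$ of $\bar h^{\,n}$: if you choose $c=p^n\in P$ (where $\bar h=\pi(p)$) then $c\in P\subseteq PH^a$ already; if you choose $c\in H^a$ then $c\in H^a\subseteq PH^a$ already. Either way the clause is vacuous and does \emph{not} manufacture an element of $H^a\cap P$ with image $\bar h^{\,n}$. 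There is no ``suitable $b$'' that turns the double-coset statement into a lifting statement for $\pi(H^a)\cap\pi(P)\to H^a\cap P$.

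The paper's argument is structurally different and explains why the double-coset clause is the right tool despite the above. One first invokes the induced peripheral structure: an infinite-order parabolic $\pi(u)\in\pi(H)\cap\pi(P)$ must be $\pi(H)$-conjugate into some $\pi(D)$ with $D=H\cap P^{\,b}\in\mathbb D$; if $D$ is not in the $H$-conjugacy class of $H\cap P$, then one checks $b\notin PH$ using that $P$ is abelian. Almost-malnormality of $\pi(P)$ in $\bar G$ then forces $\pi(b)\in\pi(P)\pi(H)$, and \emph{now} the double-coset clause, applied to this specific $b$ coming from the finite set of conjugators in $\mathbb D$, yields the contradiction. The point is that $b$ is not an arbitrary lift of $\bar h$ but a structural conjugator fixed in advance by the peripheral data, which is exactly what makes the clause bite. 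Your attempted direct lifting bypasses this structure and, as shown above, extracts nothing from the clause.
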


\begin{proof}

By replacing $H$ by $H^a$, without loss of generality, we can assume that $a=1$. Suppose that $\pi(H\cap P)$ is finite but $\pi(H)\cap \pi(P)$ is infinite. Take an element $\pi(u)$ of infinite order in $\pi(H)\cap \pi(P)$ where $u\in P$. Since the element $\pi(u)$ is parabolic of infinite order in $\pi(H)$, Remark \ref{induced} implies that $\pi(u)$ have the form $\pi(v^{bh})=\pi(h^{-1} b^{-1} v b h)$ where $h\in H$ and where $v\in H^{b^{-1}}\cap P$, and where $H\cap P^b$ is another element of $\mathbb{D}$, different from $H\cap P$. Hence, $\pi(u)=\pi( v^{bh})=\pi(v)^{\pi(bh)}$.
The groups $H\cap P$ and $H\cap P^{b}$ from $\mathbb{D}$ are in different $H$-conjugacy classes and therefore $b\not\in  PH$ (since $P$ is abelian).
In $\bar G=\pi(G)$, the elements $\pi(u)$ and $\pi(v)$ are infinite order elements of the finitely generated infinite abelian group $\pi(P)$ such that $\pi(u)$ and $\pi(v)$ are conjugate in $\bar{G}$. This implies that $\pi(u)=\pi(v)$ by almost-malnormality of $\pi(P)$ in $\bar{G}$. Then we have $\pi(v)=\pi(u)=\pi(v)^{\pi(bh)}$. Since $\pi(v)$ has infinite order in $\pi(P)$, almost malnormality of $\pi(P)$ implies that $ \pi(bh)\in \pi(P)$ and hence $\pi(b)\in \pi(P)\pi(H)$. However, the fact that $b\not\in PH$ implies, by the second part of  Proposition \ref{GM17}, $\pi(b)\not\in \pi(H)\pi(P)$. This gives a contradiction. 
\end{proof}

We now define a particular Dehn filling of $G$ having some conditions on $H$, namely a benign Dehn filling with respect to $H$.

\begin{definition}[Benign Dehn fillings] \label{benign}
Let $(G,\mathbb{P})$ be a relatively hyperbolic group where all peripheral subgroups of $G$ are finitely generated free abelian, and where $\mathbb{P}=\{P_1, \dots, P_k\}.$ Let $H$ be a relatively quasiconvex subgroup of $G$.
Let $Z\subset( \bigcup_{P\in \mathbb{P}} P)\setminus \{1\}$ be a finite subset provided by Theorem \ref{OGM}.
A Dehn filling $\bar{G}$ of $G$, determined by a collection $N_1\trianglelefteq P_1,\dots , N_k\trianglelefteq  P_k$ of normal subgroups, is called \textit{benign with respect to $H$} if:

\begin{enumerate}[{(a)}]
\item\label{benign-1}  The Dehn filling $\pi : G\to \bar{G}$ is  $Z$-long. 

\item\label{benign-3} There is some index $i$  such that $H^{g_i}\cap N_i$ is infinite,  that 
$\pi(H^{g_i})\cap \pi(P_i)$ is finite, and that  $\pi(P_i)=P_i/N_i$ is infinite.

\item \label{benign-4}  The subgroup $\pi(H)$ is relatively quasiconvex with peripherally finite index in $\bar{G}$.

\end{enumerate}
\end{definition}

Condition(\ref{benign-1}) in Definition \ref{benign} guarantees that the Dehn filling $\bar{G}$ is relatively hyperbolic with respect to $\pi(P_i)=P_1/N_1,\dots , \pi(P_k)= P_k/N_k$, and if $\pi(P_i)$ is infinite then $\pi(P_i)$  is a maximal parabolic subgroup in $\bar{G}$ by Theorem \ref{OGM}. The following proposition says that the existence of such a benign Dehn filling of $G$ with respect to $H$ is equivalent to the existence of $P\in\mathbb{P}$, $g\in G$ such that the intersection $H^g\cap P$ is infinite and has infinite index index in $P$.

\begin{proposition}\label{benignproposition}
Let $(G,\mathbb{P})$ be a relatively hyperbolic group where all peripheral subgroups are finitely generated free abelian and let $H$ be a relatively quasiconvex subgroup of $G$. There exists a benign Dehn filling of $G$ with respect to $H$ if and only if $H$ does not have peripherally finite index in $G$.
\end{proposition}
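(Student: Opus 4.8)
The plan is to prove the two implications separately, with the forward direction (existence of a benign filling $\Rightarrow$ $H$ does not have peripherally finite index) being the easy one and the reverse direction being the substantive content.

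For the forward direction, suppose $\pi : G \to \bar G$ is a benign Dehn filling with respect to $H$, with index $i$ and element $g_i$ as in condition (\ref{benign-3}) of Definition \ref{benign}. Then $H^{g_i}\cap N_i$ is infinite, so a fortiori $H^{g_i}\cap P_i$ is infinite. I claim that $H^{g_i}\cap P_i$ has infinite index in $P_i$. Indeed, if $H^{g_i}\cap P_i$ had finite index in $P_i$, then since $N_i \le P_i$ and $P_i$ is finitely generated abelian, $H^{g_i}\cap N_i$ would have finite index in $N_i$; but then, working in $P_i/N_i = \pi(P_i)$, the image $\pi(H^{g_i}\cap P_i) \supseteq \pi(H^{g_i}\cap P_i \cap \dots)$ would be finite index in the infinite group $\pi(P_i)$, hence infinite, contradicting that $\pi(H^{g_i})\cap\pi(P_i)$ is finite (note $\pi(H^{g_i}\cap P_i)\le \pi(H^{g_i})\cap\pi(P_i)$). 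So $H^{g_i}\cap P_i$ is an infinite subgroup of infinite index in $P_i$, which exactly witnesses that $H$ fails to have peripherally finite index in $G$.

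For the reverse direction, assume $H$ does not have peripherally finite index in $G$, so there exist $P = P_i \in \mathbb{P}$ and $g\in G$ with $U := H^g \cap P$ infinite and of infinite index in $P$. The idea is to build the benign filling by hand: choose $N_i \trianglelefteq P_i$ so that $N_i$ meets $U$ in an infinite subgroup but the image of $U$ in $P_i/N_i$ is finite while $P_i/N_i$ itself stays infinite — this is possible precisely because $U$ is infinite of infinite index in the finitely generated free abelian group $P_i$ (split off a complementary direction). On the other factors $P_j$, $j\ne i$, I take $N_j$ to be a suitable finite-index subgroup. Then I must arrange that this choice can simultaneously be made (i) $Z$-long for the $Z$ of Theorem \ref{OGM} (shrink $N_i, N_j$ to avoid the finite set $Z$ — this costs nothing since avoiding finitely many nonzero vectors in $\mathbb{Z}^n$ only requires passing to a finite-index subgroup, and one can do this while keeping $N_i \cap U$ infinite), and (ii) sufficiently long and $H$-wide in the sense needed to invoke Proposition \ref{GM17-1}, Proposition \ref{GM17-2}, Proposition \ref{finite}, and Theorem \ref{GM17}; this is exactly what Theorem \ref{GM17} provides, since it produces finite-index $K_p \trianglelefteq P$ such that \emph{any} $N_P \le K_P$ gives an $(H,S)$-wide filling — so I intersect my chosen $N_i, N_j$ with these $K_p$, again without destroying infiniteness of $N_i\cap U$. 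With this filling: condition (\ref{benign-1}) holds by $Z$-longness; for condition (\ref{benign-3}), $H^g\cap N_i \supseteq U\cap N_i$ is infinite by construction, $\pi(P_i)=P_i/N_i$ is infinite by construction, and $\pi(H^g)\cap\pi(P_i)$ is finite by Proposition \ref{finite} (applied with $a = g$) since $\pi(H^g\cap P_i) = \pi(U)$ is finite by construction; condition (\ref{benign-4}) holds because $\pi(H)$ is relatively quasiconvex by Proposition \ref{GM17-1} and its induced peripheral structure from $\bar G$ is, by Proposition \ref{image}, the same up to conjugation as $\bar{\mathbb{D}}$, each of whose members is the image of some $D \in \mathbb{D}$ hence a quotient of a finitely generated free abelian group, and the finiteness of peripheral indices in $\bar G$ follows by tracking the finite-index subgroups $K_p$ and $N_j$ through the quotient. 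Thus the constructed filling is benign with respect to $H$.

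The main obstacle I anticipate is the bookkeeping in the reverse direction: one must choose the normal subgroups $N_1, \dots, N_k$ to satisfy \emph{three} constraints at once — the explicit infiniteness/finiteness requirements of (\ref{benign-3}), the $Z$-longness of (\ref{benign-1}), and the "sufficiently long and $H$-wide" hypotheses required to apply Proposition \ref{finite} and get relative quasiconvexity and peripheral finite index for (\ref{benign-4}) — and one has to check that successively shrinking $N_i$ (to avoid $Z$, to lie inside $K_{P_i}$) never kills the crucial property $N_i\cap U$ infinite. The key point making this work is that all the "shrinking" operations are passages to finite-index subgroups of $P_i$, while $U$ is infinite, so $U\cap N_i$ remains infinite throughout; I will want to state this as a small lemma about finitely generated free abelian groups before assembling the filling. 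A secondary subtlety is verifying that Proposition \ref{finite} applies with $H^a$ for $a = g$ rather than $a=1$ — but the statement of Proposition \ref{finite} is already given for a fixed $a\in G$, so this is immediate.
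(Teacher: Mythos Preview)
Your forward direction is correct and matches the paper's argument.

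The reverse direction has a genuine gap in verifying condition~(\ref{benign-4}), that $\pi(H)$ has peripherally finite index in $\bar G$. Your construction chooses $N_i$ based on a \emph{single} witness $U = H^g \cap P_i$, essentially taking $N_i$ commensurable with $U$ (then shrinking for longness and wideness), and taking $N_j$ of finite index in $P_j$ for $j\ne i$. The problem is that there may be \emph{other} elements $D' = H\cap P_i^{g'}$ of $\mathbb{D}$, from different $H$-conjugacy classes but living in conjugates of the \emph{same} $P_i$. Writing $U' = (D')^{(g')^{-1}} \le P_i$, the image $\pi(U')$ in $P_i/N_i$ may be infinite of infinite index: for instance if $P_i = \mathbb{Z}^3$, $U = \langle e_1\rangle$, $U' = \langle e_2\rangle$, and $N_i \le \langle e_1\rangle$, then $\pi(U')\cong\mathbb{Z}$ has infinite index in $\pi(P_i)\supseteq\mathbb{Z}^2$. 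By Proposition~\ref{image} this $\pi(D')$ belongs to the induced peripheral structure on $\pi(H)$, so $\pi(H)$ fails to have peripherally finite index in $\bar G$. Your phrase ``tracking the finite-index subgroups $K_p$ and $N_j$ through the quotient'' does not address this, since $N_i$ is \emph{not} of finite index in $P_i$.

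The paper's remedy is to construct $N_i$ using all of $\mathbb{D}$ at once. For each $P_i$ one lists \emph{all} the conjugated intersections $U_1,\dots,U_m \le P_i$ coming from $\mathbb{D}$, selects a \emph{maximal} subcollection $U_1,\dots,U_t$ that together still generate a subgroup $M_i$ of infinite index in $P_i$, and then takes $N_i$ to be a suitable finite-index subgroup of $M_i$. Maximality forces a dichotomy: each $U_j$ either lies in the chosen subcollection (hence has finite image in $P_i/N_i$) or, when adjoined to it, generates a finite-index subgroup of $P_i$ (hence $\pi(U_j)$ has finite index in $\pi(P_i)$). This is precisely what condition~(\ref{benign-4}) requires. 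The remaining verifications, including condition~(\ref{benign-3}) via Proposition~\ref{finite}, then proceed as you outlined.
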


\begin{proof}
Suppose there exists a benign Dehn filling $\pi :G \to \bar{G}$ with respect to $H$.
Let $H^{g_i}$ and $P_i$ be as in part (\ref{benign-3}) of Definition \ref{benign}, so that $H^{g_i}\cap N_i$ is infinite,  $\pi(H^{g_i}) \cap\pi(P_i)$ is finite and $P_i/N_i=\pi(P_i)$ is infinite. 
Since $\pi(H^{g_i}\cap P_i)\le \pi(H^{g_i})\cap \pi(P_i)$, it follows that $ \pi(H^{g_i}\cap P_i)$ has infinite index in $\pi(P_i)$. Hence, $H^{g_i}\cap P_i$ is infinite and has infinite index in $P_i$, and therefore $H$ does not have peripherally finite index in G.

Suppose now that $H$ is relatively quasiconvex but does not peripherally finite index.
List all distinct representatives $\mathbb{D}$ of $H$-conjugacy classes of infinite parabolic subgroups of the form $H\cap P^g$ of $H$.
Group them according to which $P_i$ they come from.
Take a specific $P_i$ and suppose that the subgroups in the above list are $D_1=H\cap P_i^{g_i}, \dots, D_m=H\cap P_i^{g_m}$. Put $U_j=D_j^{g_j^{-1}}$ so that $U_j$ is an infinite finitely generated subgroup of the free abelian group $P_i$.
Take a maximal subcollection $U_1,\dots , U_t$ of $U_j$'s in $P_i$ such that $\bigcup_{i=1}^t U_i$ generates a subgroup $M_i$ of  $P_i$ of infinite index in $P_i$.
If all $U_j$ have finite index in $P_i$, take the subcollection as an empty set.
Then adding any extra $U_y$ to this collection generates a subgroup of finite index in $P_i$.
Now choose a subgroup $N_i$ of finite index in $M_i$ such that $N_1,\dots ,N_k$ is a sufficiently long and $H$-wide filling.

We claim that the Dehn filling $\pi : G\to G/ \langle\langle \bigcup N_i \rangle\rangle$ is benign with respect to $H$. By construction, condition(\ref{benign-1}) in Definition \ref{benign} is satisfied.
By Proposition \ref{GM17-1}, $\pi(H)$ is relatively quasiconvex in $\bar{G}$.
Each of $U_1,\dots , U_t$ is commensurable with a subgroup of $N_i$ and therefore has finite image in $\bar{G}$.
Each $U_y\le P_i$ not from the list $U_1,\dots ,U_t$ has its image $\pi(U_y)$ having finite index in the infinite group $\pi(P_i)=P_i/N_i$.

By Remark  \ref{induced}, the induced peripheral structure on $\pi(H)$ is given exactly by all the infinite groups among $\bar{D}=\pi(D)$ where $D\in \mathbb{D}$. Therefore, $\pi(H)$ has peripherally finite index in $\bar{G}$, and so condition(\ref{benign-4}) in Definition \ref{benign} is satisfied.
Since $H$ does not have peripherally finite index in $G$, there is a $P_i$ such that $N_i$ has infinite index in $P_i$ and that $N_i$ contains a subgroup commensurable with some infinite subgroup $H^a\cap P_i$ of $P_i$ for some $D=H\cap P_i^{a^{-1}}\in \mathbb{D}$.
Then $\pi(H^a\cap P_i)$ is a finite subgroup in the infinite group $\pi(P_i)=P_i/N_i$. By Proposition \ref{finite}, the intersection $\pi(H^a) \cap \pi(P_i)$ is finite, so condition(\ref{benign-3}) in Definition \ref{benign} also holds.
Hence, the filling $\pi : G\to G/ \langle\langle \bigcup N_i \rangle\rangle$ is benign with respect to $H$, as required.\end{proof}

\begin{proof}[Proof of Theorem \ref{main3}(\ref{main3-4})]
Fix a finite set $Z=Z(G)$ as in the conclusion of Theorem \ref{OGM}. 

Let $H$ be an undistorted subgroup of a toral relatively hyperbolic group $(G, \mathbb{P}=\{ P_1, \dots, P_k\})$. 
Since $H$ is undistorted, a result of Hruska \cite{H13} implies that $H$ is relatively quasiconvex in $G$. 

We run the algorithm in Theorem \ref{main3}(\ref{main3-3}), and if the algorithm terminates, then we declare that $H$ is Morse in $G$.

In parallel, we run the following procedure. We start enumerating all plausible candidates for being a benign Dehn filling of $G$ with respect to $H$ as follows. 
Start enumerating all nontrivial elements of the form $h^g$ where $h\in H$ and $g\in G$ and enumerating all elements of $P_1,\dots ,P_k$ and checking if $h^g$ is equal to an element of $P_i$ for some $1\le i\le k$.
Using this check, we enumerate all nontrivial elements $\gamma$ such that $\gamma\in H^g\cap  P_i$ where $g\in G$, $1\le i\le k$.
Using this list, start enumerating all tuples $\tau=(\gamma, Y_1, \dots ,Y_k)$
where $1\ne \gamma\in H^{g_j}\cap P_j$ for some $P_j$ and some $g_j\in G$, and where $Y_1\subseteq P_1,\dots, Y_k\subseteq P_k$ are finite subsets.
Given each such tuple $\tau$, do the following. Put $N_i=\langle Y_i\rangle \trianglelefteq P_i$. Then check if $N_i\cap Z=\emptyset$ for each $i=1,\dots , k$. If not, we discard this tuple $\tau$ and move to the next one. 
Suppose now that $N_i\cap Z=\emptyset$ for each $i=1,\dots,k$, so that  the filling $\pi$ defined by $N_1,\dots ,N_k $ is $Z$-long.
We declare that the Dehn filling $\pi$ a candidate for a benign Dehn filling.  We then run the partial algorithm in Theorem \ref{KMW17-3} on $\pi(H)$. Suppose the algorithm terminates and discovers that $\pi(H)$ is a relatively quasiconvex with peripherally finite index in $\bar{G}$. 
Note that each $P_i/N_i=\pi(P_i)$ is parabolic and thus relatively quasiconvex in $\bar{G}$.  We use the algorithm in Proposition \ref{4.14} to compute a generating set of $\pi(H)^{\pi(g_j)}\cap \pi(P_i)$. If $\pi(H)^{\pi(g_j)}\cap \pi(P_i)$ is finite but $\pi(P_j)=P_j/N_j$ is infinite, then the filling $\pi$ is benign with respect to $H$. Then we terminate the entire procedure and declare that $H$ is not Morse in $G$.


Recall that the subgroup $H$ is relatively quasiconvex in $G$. If $H$ is Morse in $G$, that is, if $H$ has peripherally finite index, then the algorithm in Theorem \ref{main3}(\ref{main3-3}) eventually discovers this fact and terminates. 
Suppose that $H$ is not Morse in $G$, that is $H$ does not have peripherally finite index in $G$.  By Proposition  \ref{benignproposition} this happens if and only if there exists a benign Dehn filling of $G$ with respect $H$. Our second process above will eventually discover such a benign filling and declare that $H$ is not Morse in $G$.
Therefore the above algorithm correctly decides whether or not $H$ is Morse in $G$, as required.  \end{proof}

We use \cite{D03} as background reference to Bowditch boundary of a relatively hyperbolic group in the following proposition.

\begin{proposition}\label{loxodromicelement}
Let $(G,\mathbb{P}$) be a relatively hyperbolic group where $\mathbb{P}$ is a finite collection of finitely generated subgroups. Let $H$ be a relatively quasiconvex subgroup of $G$ with $[G:H]=\infty$. Then there exists a loxodromic element $g\in G$ such that $H\cap \langle g\rangle=\{1\}$.
\end{proposition}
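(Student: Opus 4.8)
The plan is to use the dynamics of the action of $G$ on the Bowditch boundary $\partial(G,\mathbb{P})$. Since $H$ is relatively quasiconvex of infinite index in $G$, its limit set $\Lambda H \subseteq \partial(G,\mathbb{P})$ is a proper closed $H$-invariant subset, and the complement $\partial(G,\mathbb{P})\setminus \Lambda H$ is a nonempty open set. First I would recall that a relatively quasiconvex subgroup of infinite index cannot have finite index (its limit set is not all of $\partial(G,\mathbb{P})$), and that $\partial(G,\mathbb{P})$ contains infinitely many conical limit points (the fixed points of loxodromic elements of $G$ are dense among them). The goal is to produce a loxodromic $g$ whose fixed point pair $\{g^{+\infty},g^{-\infty}\}$ is disjoint from $\Lambda H$; then $\langle g\rangle \cap H$ is finite, and since $G$ is torsion-free (in the toral case) — or more generally, since an infinite subgroup of $\langle g\rangle$ would have the same limit set $\{g^{\pm\infty}\}$ forcing it into $\Lambda H$ — we get $\langle g\rangle\cap H=\{1\}$.

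The key steps, in order: (1) Note $\Lambda H$ is a proper closed subset of $\partial(G,\mathbb{P})$, so pick a nonempty open set $V$ with $\overline{V}\cap \Lambda H = \emptyset$; this uses that $\partial(G,\mathbb{P})$ is metrizable and that $\Lambda H \neq \partial(G,\mathbb{P})$ by infinite index. (2) Use a ping-pong / North-South dynamics argument: take any two loxodromic elements $a,b\in G$ with all four fixed points distinct (these exist since $G$ is non-elementary relatively hyperbolic whenever it has an infinite-index relatively quasiconvex subgroup — if $G$ were elementary the statement is either trivial or vacuous). For suitable large powers, words in $a^{N}, b^{N}$ realize loxodromic elements whose attracting/repelling fixed points can be pushed into any prescribed pair of small neighborhoods; in particular, choose $g$ a high power of a loxodromic element whose axis endpoints lie in $V$. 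Concretely, if $g_0$ is loxodromic with $g_0^{+\infty}, g_0^{-\infty}$ not both in $\Lambda H$ — which we can arrange since loxodromic fixed pairs are dense in $\partial(G,\mathbb{P})\times\partial(G,\mathbb{P})$ minus the diagonal — and in fact one can conjugate to move both endpoints into $V$, then $g=g_0$ works. (3) Conclude: if $g^n \in H$ for some $n\neq 0$, then $g^{\pm\infty}\in \Lambda H$, contradicting the choice; hence $\langle g\rangle\cap H$ is trivial (torsion-freeness handles torsion, or: any nontrivial element of $\langle g\rangle \cap H$ has infinite order since $\langle g\rangle\cong\mathbb{Z}$).

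I expect the main obstacle to be Step (2): making precise the density of loxodromic fixed-point pairs and the ability to realize a loxodromic element with \emph{both} axis endpoints inside the prescribed open set $V$. One clean way around this is to avoid requiring both endpoints in $V$: it suffices to find a loxodromic $g$ with $\{g^{+\infty},g^{-\infty}\}\cap\Lambda H=\emptyset$, and for this one can take a loxodromic $h\in G$ together with the fact that the $G$-translates of $\Lambda H$ do not cover $\partial(G,\mathbb{P})$ (again by infinite index — more precisely, one uses that $\Lambda H$ has empty interior when $[G:H]=\infty$, so countably many translates cannot be the whole boundary by Baire category); then some conjugate $h^{g}$ has both fixed points in the complement of $\Lambda H$. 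Alternatively, one invokes the standard fact that a relatively quasiconvex subgroup of infinite index is contained in a relatively quasiconvex subgroup of infinite index together with a loxodromic element generating a larger subgroup splitting as a free product — but the boundary-dynamics argument above is the most self-contained route, and the Baire category observation is the crucial technical point to nail down carefully.
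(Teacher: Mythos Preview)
Your approach is essentially the paper's: use the Bowditch boundary $\partial G$, the limit set $\Lambda H$, the fact that $\Lambda H\subsetneq\partial G$ for relatively quasiconvex $H$ of infinite index, and density of loxodromic fixed points.

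The one substantive difference is that you work much harder than necessary in Step~(2). You aim to produce a loxodromic $g$ with \emph{both} endpoints $g^{+\infty},g^{-\infty}$ outside $\Lambda H$, and this leads you into ping-pong, conjugation tricks, and the Baire category argument about $\Lambda H$ having empty interior---which you correctly flag as the delicate point. But none of this is needed: it suffices that a \emph{single} pole, say $g^{+\infty}$, lie outside $\Lambda H$. Indeed, if $1\ne g^n\in H$ for some $n$, then $g^n$ is loxodromic with the same fixed points as $g$, so $g^{+\infty}\in\Lambda H$, a contradiction. Since the set of poles of loxodromic elements is dense in $\partial G$ (a standard convergence-group fact), and $\partial G\setminus\Lambda H$ is a nonempty open set, such a $g$ exists immediately. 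This is exactly the paper's argument, in three lines.

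So your proof is not wrong, but your self-identified ``main obstacle'' is an obstacle of your own making; drop the requirement on $g^{-\infty}$ and the argument collapses to the paper's.
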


\begin{proof}
For a relatively hyperbolic group $G$ and a relatively quasiconvex subgroup $H$, take the limit set $\Lambda(H)$ in the Bowditch boundary $\partial{G}$. Then $\Lambda(H)$ is a closed subset of $\partial{G}$, and since $H$ is a relatively quasiconvex subgroup of infinite index in $G$, the complement $ \partial{G}\setminus \Lambda(H)$ is nonempty (see Proposition 1.8 in \cite{D03}).
Since the poles of loxodromic elements of $G$ are dense in $\partial{G}$, there exists a loxodromic element $g\in G$ such that $g^\infty\in \partial G\setminus \Lambda(H)$.
Then $\langle g\rangle \cap H=\{1\}$ since otherwise some positive power $g^n$ of $g$ belongs to $H$ and hence $g^\infty\in \partial{H}$.
\end{proof}

\begin{proof}[Proof of Corollary \ref{main5}]
Let $(G,\mathbb{P}$) be a toral relatively hyperbolic group (where $\mathbb{P}$ is possibly empty). Given an undistorted subgroup $H$ of $G$, we do the following.

First, run the algorithm in Theorem \ref{main3}(\ref{main3-4}) for deciding whether $H$ is Morse in $G$.
If $H$ is determined to be non-Morse in $G$, then $H$ has infinite index in $G$.

Suppose now that $H$ turned out to be Morse, that is peripherally of finite index in $G$. We then run in parallel the following two processes:

Run the Todd-Coxeter coset enumeration for detecting finiteness of index of $H$ in $G$. 
In parallel, start enumerating all loxodromic elements $g\in G$. Note that for given a word $g$ in the generators of $G$ we can algorithmically decide whether or not $g$ is loxodromic (see Theorem 5.6 in \cite{O06}).
For each such $g$ the subgroup $\langle g\rangle$ of $G$ is relatively quasiconvex and peripherally of finite index. We then use Proposition \ref{4.14} to compute the subgroup $H\cap \langle g\rangle$. If this subgroup is trivial, we declare that $H$ has infinite index in $G$. If this subgroup is nontrivial (and thus has finite index in $\langle g\rangle$) then we move to the next loxodromic element $g\in G$.

By Proposition \ref{loxodromicelement}, the above procedure decides whether or not $H$ has finite index in $G$.\end{proof}

\section{Groups discriminated by a locally quasiconvex torsion-free hyperbolic group}\label{6}
In this section, we consider a special toral relatively hyperbolic group, that is, a finitely generated group discriminated by a locally quasiconvex torsion-free hyperbolic group. In particular, ordinary limit groups are finitely generated groups discriminated by the free group $F_2$ which is locally quasiconvex torsion-free hyperbolic .

\begin{definition}
We say that a group $G$ is \textit{discriminated} by another group $\Gamma$  if for every finite set $\{g_1, \dots, g_n\}$ of non-trivial elements of $G$ there exists a homomorphism $f : G \rightarrow \Gamma$ such that $f(g_i)\ne 1$ for $i = 1, \dots , n$.
\end{definition}

A word-hyperbolic group $A$ is called \textit{locally quasiconvex} if every finitely generated subgroup of $A$ is quasiconvex. A finitely generated relatively hyperbolic group $A$ is \textit{locally relatively quasiconvex} if every finitely generated subgroup of $A$ is relatively quasiconvex.

Note that if $(G,\mathbb{P})$ is toral relatively hyperbolic and $1\ne g\in G$ then the centralizer $C(g)$ of $g$ in $G$ is the maximal abelian subgroup of $G$ containing $g$. Moreover, if $g$ is loxodromic then $C(g)$ is infinite cyclic if $g$ is parabolic then $C(g)$ is equal to $P^a$ where $P^a$ is the unique conjugate of $P\in \mathbb{P}$ such that $g\in P^a$.


\begin{definition}
Let $A$ be a toral relatively hyperbolic group. An \textit{extension of centralizer} of $A$ is a group presented by 

\centerline{$B=\langle A, t_1, \dots, t_r \,|\, [\,C(g), t_i\,]=[\,t_i, t_j\,]=1 , \,1\le i,\, j \le r\rangle$}
\noindent
where $1\ne g\in A$ and $C(g)$ is the centralizer of $g$ in $A$.
\end{definition}

\begin{proposition}\label{extension}
Let $A$ be a torsion-free hyperbolic groups and let $B$ be an extension of centralizer of $A$. Then $B$ is toral relatively hyperbolic. Moreover, if $A$ is locally relatively quasiconvex then so is $B$. 
\end{proposition}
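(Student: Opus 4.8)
The plan is to realize $B$ as an iterated amalgamated product and a relative HNN extension over abelian subgroups of the toral relatively hyperbolic group $A$, and then invoke the combination theorem of Dahmani (or the version of Bigdely--Wise / Mj--Reeves) for relatively hyperbolic groups amalgamated along peripheral-like subgroups. Concretely, write $g\in A$ and let $C=C(g)$, which by the remark preceding the definition of extension of centralizer is the unique maximal abelian subgroup of $A$ containing $g$; it is free abelian of some finite rank $m$ (toral). The group $B=\langle A, t_1,\dots,t_r \mid [C,t_i]=[t_i,t_j]=1\rangle$ is obtained from $A$ by forming the amalgam $A *_{C} (C\times \mathbb{Z}^r)$, where $C$ is embedded diagonally as $C\times\{0\}$ in $C\times\mathbb{Z}^r\cong \mathbb{Z}^{m+r}$. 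Since $C$ is abelian, $C\times\mathbb{Z}^r$ is abelian, and the edge group $C$ is a maximal parabolic (if $g$ is parabolic, $C$ is conjugate to some $P\in\mathbb{P}$) or a ``new'' abelian subgroup containing a loxodromic (if $g$ is loxodromic, $C\cong\mathbb{Z}$, which is not peripheral in $A$ but sits in a larger free abelian vertex group of $B$).

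First I would set up the correct relative structure. In the case $g$ parabolic, $C=P^a$ for some $P\in\mathbb{P}$ and $a\in A$; one takes the new peripheral family to be $\mathbb{P}$ with $P$ replaced by $P\times\mathbb{Z}^r$ (up to the conjugation), leaving the other peripherals untouched. In the case $g$ loxodromic, $C\cong\mathbb{Z}$ is not peripheral, and we must \emph{add} a new peripheral subgroup: the vertex group $C\times\mathbb{Z}^r\cong\mathbb{Z}^{1+r}$, which is free abelian and non-cyclic once $r\ge 1$. Either way, the resulting candidate peripheral structure on $B$ consists of free abelian non-cyclic groups, so once we prove relative hyperbolicity and torsion-freeness we are done with the ``toral'' claim. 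Torsion-freeness is immediate: $B$ is built from the torsion-free group $A$ by amalgamating along a subgroup over a torsion-free group, so every torsion element is conjugate into a factor and hence trivial.

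For relative hyperbolicity, I would apply Dahmani's combination theorem (``Combination of convergence groups'', or equivalently the acylindrical-accessibility-style statement) to the one-edge graph of groups $A *_C (C\times\mathbb{Z}^r)$: the edge group $C$ is, in the target relative structure, a subgroup of a peripheral subgroup of each vertex group and is \emph{full} (finite index would not even be needed — it is equal to a retract), and the amalgam along a full relatively quasiconvex subgroup of peripheral type yields a relatively hyperbolic group with the peripheral structure obtained by ``merging'' the two peripheral subgroups containing $C$. This is exactly the setting where Dahmani's hypotheses are met, because $C$ has finite height (it is abelian, malnormal-up-to-commensuration inside its maximal peripheral) and the edge group is relatively quasiconvex in both vertex groups. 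That gives the first sentence of the proposition. (Alternatively, one can cite the fact, well known in the limit-group literature, that extensions of centralizers of toral relatively hyperbolic groups are again toral relatively hyperbolic — e.g.\ Dahmani, or Guirardel.)

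For the ``moreover'' clause, suppose $A$ is locally relatively quasiconvex (equivalently, since $A$ is hyperbolic, locally quasiconvex). Let $H\le B$ be finitely generated; I want $H$ relatively quasiconvex in $B$. Here I would use the Bass--Serre tree $T$ dual to the splitting $B=A*_C(C\times\mathbb{Z}^r)$: $H$ acts on $T$, and since $H$ is finitely generated it acts cocompactly on a minimal subtree, giving a finite graph-of-groups decomposition of $H$ whose vertex groups are $H$-conjugates intersected with $A$-conjugates or with $(C\times\mathbb{Z}^r)$-conjugates, and whose edge groups are intersections with conjugates of $C$. The $A$-vertex groups are finitely generated (this uses that $C$ is Noetherian — abelian — so the edge groups are finitely generated, hence the vertex groups are too, by a standard argument) and hence relatively quasiconvex in $A$ by hypothesis; the $(C\times\mathbb{Z}^r)$-vertex groups are subgroups of a free abelian group, hence themselves free abelian and obviously relatively quasiconvex in $C\times\mathbb{Z}^r$; the edge groups are subgroups of $C$ (abelian), hence full in the peripheral structure. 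Then I would invoke the converse direction of the combination theorem — a subgroup acting cocompactly on a tree with relatively quasiconvex vertex groups and full peripheral edge groups is relatively quasiconvex in the total group (this is, e.g., Bigdely--Wise or Hruska's work on relatively quasiconvex subgroups and graphs of groups). This yields $H$ relatively quasiconvex in $B$, i.e.\ $B$ is locally relatively quasiconvex.

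\textbf{Main obstacle.} The delicate point is the bookkeeping of the peripheral structure under the amalgamation, especially when $g$ is loxodromic: there $C\cong\mathbb{Z}$ is \emph{not} peripheral in $A$, so the naive ``merge the two peripherals containing $C$'' recipe does not literally apply, and one must verify that enlarging the peripheral family of $A$ by the non-peripheral cyclic subgroup $C$ (and then merging with $\mathbb{Z}^r$) still produces a valid relatively hyperbolic structure — i.e.\ that $C$ has finite height and is ``small/quasiconvex'' enough, which is where torality (all the relevant subgroups are abelian, hence well-behaved) does the real work. Getting the finite-generation of the $A$-vertex groups in the ``moreover'' part, and checking fullness of edge groups so that the converse combination theorem applies cleanly, is the other place where care is needed, but it is routine given that all edge groups are finitely generated abelian.
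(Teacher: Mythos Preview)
Your proposal is correct and follows essentially the same route as the paper: the paper does not give a self-contained argument but simply refers to Dahmani's combination theorems \cite{D03} for the first assertion and to Theorem~3.1 of Bigdely--Wise \cite{BW13} for the second, and your write-up is precisely an unpacking of how those two results apply to the amalgam $A *_C (C\times\mathbb{Z}^r)$. The case distinction you make (parabolic versus loxodromic $g$), the Bass--Serre tree argument with Noetherian edge groups to get finite generation of vertex groups, and the ``main obstacle'' you flag about adjoining the cyclic $C$ to the peripheral structure in the loxodromic case are exactly the details hidden behind those citations.
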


For the first part of Proposition \ref{extension}, see combination theorems in \cite{D03} and for the second part, see Theorem 3.1 in \cite{BW13}. The following theorem says that for a finitely generated group $G$ discriminated by a torsion-free hyperbolic $\Gamma$, there exists a sequence of centralizer extensions of $\Gamma$ and an embedding of $G$ into a centralizer extension. Theorem \ref{BM16} follows from Theorem B and Theorem C in \cite{KM12}.

\begin{theorem}\label{BM16}
Let $\Gamma$ be a torsion-free hyperbolic group and let $G$ be a finitely generated group discriminated by $\Gamma$.
There exists a sequence centralizer extensions $\Gamma=G_0 <G_1 <\dots<G_n$ where $G_{i+1}$ is an extension of a centralizer of $G_i$ and an embedding $G\hookrightarrow G_n$, and where each $G_i$ is toral relatively hyperbolic. 
\end{theorem}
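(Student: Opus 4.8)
The final statement to prove is Theorem \ref{BM16}, which says that a finitely generated group $G$ discriminated by a torsion-free hyperbolic group $\Gamma$ embeds into an iterated centralizer extension $G_n$ of $\Gamma$, with each intermediate group $G_i$ toral relatively hyperbolic.

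The plan is to invoke the structure theory of groups discriminated by hyperbolic groups, following Kharlampovich--Miasnikov. First I would recall the key input, Theorem B of \cite{KM12}, which identifies the finitely generated groups discriminated by a torsion-free hyperbolic group $\Gamma$ with the finitely generated subgroups of $\Gamma$-limit groups, and characterizes $\Gamma$-limit groups as iterated extensions of centralizers over $\Gamma$; that is, there is a chain $\Gamma = G_0 < G_1 < \dots < G_n$ with each $G_{i+1}$ an extension of a centralizer of $G_i$, and $G$ (being finitely generated and discriminated by $\Gamma$, hence a f.g.\ subgroup of a $\Gamma$-limit group) embeds into $G_n$. Then I would use Theorem C of \cite{KM12}, together with the combination theorems for relatively hyperbolic groups in \cite{D03}, to see that each $G_i$ appearing in such a chain is itself toral relatively hyperbolic: this is exactly the first part of Proposition \ref{extension} applied inductively along the chain, using that $\Gamma = G_0$ is torsion-free hyperbolic (hence vacuously toral relatively hyperbolic with empty peripheral family) and that an extension of a centralizer of a toral relatively hyperbolic group is again toral relatively hyperbolic.

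The induction is the organizing step: assuming $G_i$ is toral relatively hyperbolic, the group $G_{i+1}$ is obtained by adjoining commuting generators $t_1,\dots,t_r$ to $G_i$ that centralize $C(g)$ for some $1 \ne g \in G_i$. Here one uses the observation recorded just before the definition of extension of centralizer: in a toral relatively hyperbolic group, $C(g)$ is either infinite cyclic (when $g$ is loxodromic) or a conjugate $P^a$ of a peripheral subgroup (when $g$ is parabolic). In either case $C(g)$ is a finitely generated free abelian subgroup, so $G_{i+1} = C(g) \ast_{C(g)} (C(g) \times \mathbb{Z}^r)$ is an amalgamated product over a free abelian group, and Dahmani's combination theorem \cite{D03} yields that $G_{i+1}$ is hyperbolic relative to the peripheral subgroups of $G_i$ together with the new free abelian subgroup $C(g) \times \mathbb{Z}^r$ (absorbing $C(g)$ into it when $C(g)$ was already peripheral). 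Torsion-freeness is preserved since amalgams of torsion-free groups over any subgroup are torsion-free, and the new peripheral subgroup has rank at least $r+1 \ge 2$, so $G_{i+1}$ is again toral relatively hyperbolic. Finally, the embedding $G \hookrightarrow G_n$ is part of the conclusion of Theorem B of \cite{KM12}: a finitely generated group discriminated by $\Gamma$ is a finitely generated subgroup of a $\Gamma$-limit group, and a $\Gamma$-limit group admits such a realization as $G_n$ at the top of a centralizer-extension chain.

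The main obstacle is that essentially all of the substantive content is imported from \cite{KM12} and \cite{D03}, so the real work is to cite these correctly and to verify that the chain produced by Theorem B of \cite{KM12} can be taken so that every intermediate group $G_i$ — not just the top group $G_n$ — is toral relatively hyperbolic. This is where the inductive bookkeeping above is needed: one must check that at each stage the element $g$ whose centralizer is being extended lies in $G_i$ and that $C_{G_i}(g)$ is free abelian, which is precisely the content of the centralizer description for toral relatively hyperbolic groups noted in the excerpt, combined with the fact (from \cite{KM12}) that centralizers in $\Gamma$-limit groups are malnormal abelian. Granting these, the statement follows by a straightforward induction, and no further computation is required.
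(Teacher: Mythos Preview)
Your proposal is correct and follows the same approach as the paper: the paper's entire justification for Theorem~\ref{BM16} is the single sentence ``Theorem~\ref{BM16} follows from Theorem~B and Theorem~C in \cite{KM12},'' and you invoke exactly these results, supplementing them with the inductive argument (via Dahmani's combination theorem, already packaged in Proposition~\ref{extension}) that each intermediate $G_i$ is toral relatively hyperbolic. One small slip: in your displayed amalgam you wrote $G_{i+1} = C(g) \ast_{C(g)} (C(g)\times\mathbb{Z}^r)$, but the first factor should be $G_i$, not $C(g)$.
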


\begin{lemma}\label{lemma1}
Let $\Gamma$ be a torsion-free locally quasiconvex word-hyperbolic group and let $G$ be a finitely generated group discriminated by $\Gamma$. Then $G$ is a locally relatively quasiconvex toral relatively hyperbolic group (and in particular G is finitely presented).
\end{lemma}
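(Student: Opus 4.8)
The plan is to invoke Theorem \ref{BM16} to embed $G$ into a tower of centralizer extensions, and then propagate the locally relatively quasiconvex property down the tower using Proposition \ref{extension}, keeping track of the toral relatively hyperbolic structure at each stage. First I would apply Theorem \ref{BM16}: since $G$ is finitely generated and discriminated by the torsion-free hyperbolic group $\Gamma$, there is a chain $\Gamma = G_0 < G_1 < \dots < G_n$ in which each $G_{i+1}$ is an extension of a centralizer of $G_i$, each $G_i$ is toral relatively hyperbolic, and $G$ embeds into $G_n$. The base group $G_0 = \Gamma$ is torsion-free hyperbolic and, by hypothesis, locally quasiconvex; in a word-hyperbolic group every quasiconvex subgroup is relatively quasiconvex (with the empty peripheral family), so $G_0$ is locally relatively quasiconvex as well.

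Next I would run an induction on $i$ showing that each $G_i$ is locally relatively quasiconvex. The inductive step is exactly the second part of Proposition \ref{extension}: if $G_i$ is locally relatively quasiconvex and $G_{i+1}$ is an extension of a centralizer of $G_i$, then $G_{i+1}$ is locally relatively quasiconvex (and toral relatively hyperbolic by the first part of Proposition \ref{extension}). One subtlety is that Proposition \ref{extension} as stated takes $A$ to be a torsion-free \emph{hyperbolic} group, whereas for $i \ge 1$ the group $G_i$ is only toral relatively hyperbolic; I would appeal to the fact (as in \cite{BW13}) that the relevant statement also holds when $A$ is toral relatively hyperbolic and locally relatively quasiconvex, which is the form actually used in Theorem \ref{BM16}'s tower. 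Carrying the induction up to $i = n$ gives that $G_n$ is locally relatively quasiconvex and toral relatively hyperbolic.

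Finally I would descend to $G$ itself. Since $G \hookrightarrow G_n$ and $G$ is finitely generated, its image is a finitely generated subgroup of $G_n$, hence relatively quasiconvex in $G_n$ by local relative quasiconvexity. A relatively quasiconvex subgroup of a toral relatively hyperbolic group is itself toral relatively hyperbolic: its induced peripheral structure $\mathbb{D}$ (Definition \ref{definduced}, together with the finiteness statement from \cite{GM17} quoted after it) consists of infinite intersections with conjugates of the free abelian peripheral subgroups of $G_n$, hence of finitely generated free abelian groups, and it is torsion-free as a subgroup of a torsion-free group; non-cyclicity of the peripheral subgroups can be arranged by discarding any infinite cyclic $D \in \mathbb{D}$ and absorbing them (a maximal cyclic subgroup is hyperbolically embedded and can be dropped from the peripheral structure). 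Thus $G$ is toral relatively hyperbolic, and every finitely generated subgroup of $G$ is a finitely generated subgroup of $G_n$, hence relatively quasiconvex in $G_n$, and therefore relatively quasiconvex in $G$ by a standard transitivity property of relative quasiconvexity for subgroups of relatively quasiconvex subgroups \cite{H13}. Finite presentability of $G$ then follows since finitely generated relatively quasiconvex subgroups of toral relatively hyperbolic groups are finitely presented. The main obstacle I anticipate is the bookkeeping at each level of the tower — matching the precise hypotheses of Proposition \ref{extension} (hyperbolic vs.\ toral relatively hyperbolic base group) and verifying that relative quasiconvexity is genuinely transitive along the embedding $G \hookrightarrow G_n \hookrightarrow \dots$, rather than any single hard estimate.
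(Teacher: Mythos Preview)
Your proposal is correct and follows essentially the same route as the paper: embed $G$ in the top of the centralizer tower $G_n$ via Theorem~\ref{BM16}, iterate Proposition~\ref{extension} to get $G_n$ locally relatively quasiconvex and toral, pass the toral structure to $G$ via the induced peripheral structure (dropping cyclic peripherals), and then deduce local relative quasiconvexity of $G$. The only difference is in that last step: you invoke a transitivity statement for relative quasiconvexity along $H\le G\le G_n$, whereas the paper instead argues that $H$ and $G$ are undistorted in $G_n$ by Lemma~\ref{undistorted}, hence $H$ is undistorted in $G$, hence relatively quasiconvex in $G$ by \cite{H13}; both arguments are valid, and your observation about the hypothesis mismatch in Proposition~\ref{extension} is something the paper simply glosses over.
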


\begin{proof}
Let $\Gamma=G_0\le \dots \le G_n$ be a sequence of extensions of centralizers provided by Theorem \ref{BM16}, where $G \le G_n$.
Note that $G_n$ as in Theorem \ref{BM16} is a locally relatively quasiconvex toral relatively hyperbolic group, by iteratively applying Proposition \ref{extension}. Then the finitely generated group $G$ is relatively quasiconvex in $G_n$ and $G$ is relatively hyperbolic with respect to the induced peripheral structure from $G_n$. Therefore $G$ is toral relatively hyperbolic.

Note that the induced peripheral structure on $G$ from $G_n$ may contain some infinite cyclic peripheral subgroups. However, since infinite cyclic subgroups are word-hyperbolic, they can be dropped from the list of peripheral subgroups, and $G$ will still be relatively hyperbolic with respect to the remaining free abelian (non-cyclic) groups on the list. Thus $G$ is indeed toral relatively hyperbolic.

Let $H$ be a finitely generated subgroup of $G$. Since $H\le G\le G_n$ and $G_n$ is locally relatively quasiconvex, $H$ and $G$ are relatively quasiconvex in $G_n$. Then by Lemma \ref{undistorted}, $H$ and $G$ are undistorted in $G_n$. Therefore $H$ is undistorted in $G$, and this implies that $H$ is relatively quasiconvex in $G$ \cite{H13}.
Thus, $G$ is toral relatively hyperbolic and locally relatively quasiconvex, as required. \end{proof}

\begin{corollary}\label{lemma2}
Let $G$ be a finitely generated group discriminated by a locally quasiconvex torsion-free hyperbolic group.
Then every finitely generated subgroup is undistorted in $G$.
\end{corollary}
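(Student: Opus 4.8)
The plan is to deduce this directly from Lemma \ref{lemma1} together with Lemma \ref{undistorted}, since essentially all the work has already been carried out. First I would invoke Lemma \ref{lemma1}: if $\Gamma$ is a locally quasiconvex torsion-free hyperbolic group and $G$ is a finitely generated group discriminated by $\Gamma$, then $G$ is a toral relatively hyperbolic group which is moreover locally relatively quasiconvex, that is, every finitely generated subgroup of $G$ is relatively quasiconvex in $G$.

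Next, let $H$ be an arbitrary finitely generated subgroup of $G$. By the local relative quasiconvexity just recalled, $H$ is relatively quasiconvex in $G$. Since $(G,\mathbb{P})$ is toral, every peripheral subgroup $P\in\mathbb{P}$ is finitely generated free abelian, so the hypotheses of Lemma \ref{undistorted} are met. Applying Lemma \ref{undistorted} to the relatively quasiconvex subgroup $H$ then shows that $H$ is undistorted in $G$, which is exactly the assertion of the corollary.

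The only substantive content lies inside Lemma \ref{lemma1}, and the one delicate point there (already handled in its proof) is the following: after embedding $G$ into an iterated centralizer extension $G_n$ of $\Gamma$ via Theorem \ref{BM16}, and using Proposition \ref{extension} to see that $G_n$ is a locally relatively quasiconvex toral relatively hyperbolic group, the peripheral structure induced on $G$ from $G_n$ may include some infinite cyclic subgroups; one observes that these can be dropped from the peripheral family, since they are themselves word-hyperbolic, so that $G$ is relatively hyperbolic with respect to a family of non-cyclic free abelian subgroups and is genuinely toral. Because that subtlety is dispatched in Lemma \ref{lemma1}, the proof of Corollary \ref{lemma2} itself is a one-line combination of the two lemmas, and no new estimates or constructions are required; I expect no real obstacle beyond correctly citing the relative quasiconvexity of finitely generated subgroups furnished by Lemma \ref{lemma1}.
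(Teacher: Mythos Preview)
Your proposal is correct and follows essentially the same route as the paper: invoke Lemma~\ref{lemma1} to conclude that $G$ is toral relatively hyperbolic and locally relatively quasiconvex, so that any finitely generated $H\le G$ is relatively quasiconvex, and then apply Lemma~\ref{undistorted} to obtain that $H$ is undistorted. The additional commentary you give about the proof of Lemma~\ref{lemma1} is accurate but unnecessary for the corollary itself.
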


\begin{proof}
Let $H$ be a finitely generated subgroup of $G$. 
Lemma \ref{lemma1} implies that $G$ is locally relatively quasiconvex toral relatively hyperbolic, and therefore $H$ is relatively quasiconvex in $G$. Then by Lemma \ref{undistorted} the subgroup $H$ is undistorted in $G$. \end{proof}

\begin{proof}[Proof of Corollary \ref{main4}]

Let $G$ be a finitely generated group discriminated by a locally quasiconvex word-hyperbolic group $\Gamma$. For a finitely generated subgroup $H$ of $G$ given by a finite generating set of $H$, we run the following procedure. 
Note that $G$ is toral relatively hyperbolic and $H$ is undistorted in $G$ by Corollary \ref{lemma2}. For Corollary \ref{main4}(\ref{main4-1}) we run the algorithm in Theorem \ref{main3}(\ref{main3-2}) on $H$ and decide whether or not $H$ is stable in $G$. For Corollary \ref{main4}(\ref{main4-2}), we run the algorithm in Theorem \ref{main3}(\ref{main3-4}) on $H$ and decide whether or not $H$ is Morse in $G$. 
\end{proof}


\end{document}